\DeclareMathOperator{\Hom}{Hom}
\DeclareMathOperator{\Sh}{Sh}
\DeclareMathOperator{\sign}{sign}
\DeclareMathOperator{\Star}{Star}
\DeclareMathOperator{\bdstar}{\partial \Star}
\DeclareMathOperator{\Irr}{Irr}
\DeclareMathOperator{\Span}{Span}
\DeclareMathOperator{\Supp}{Supp}
\DeclareMathOperator{\im}{im}
\DeclareMathOperator{\coker}{coker}
\DeclareMathOperator{\ch}{ch}
\DeclareMathOperator{\Td}{Td}
\DeclareMathOperator{\id}{Id}
\newcommand\dual[1]{{#1}^{\vee}}
\newcommand{\oo}{\underline{o}}
\newcommand{\EP}{{\textstyle{\bigwedge}}}
\newtheorem{thm}{Theorem}[section]
\newtheorem{cor}[thm]{Corollary}
\newtheorem{prop}[thm]{Proposition}
\newtheorem{lemma}[thm]{Lemma}
\newtheorem{definition}[thm]{Definition}
\newtheorem{example}[thm]{Example}
\theoremstyle{remark}
\newtheorem{remark}[thm]{Remark}
\numberwithin{equation}{subsection}
\begin{document}
\title{On the signature of unimodular fans}
\author{Paul Bressler}
\address{Departamento de Matem\'aticas, Universidad de los Andes, Bogot\'a, Colombia}
\email{p.bressler@uniandes.edu.co}
\author{Diego A. Robayo Bargans}
\address{FB Mathematik, TU Kaiserslautern, 67663 Kaiserslautern, Germany}
\email{robayo@mathematik.uni-kl.de}

\begin{abstract}
N.C.~Leung and V.~Reiner showed that certain convexity conditions on a complete rational simplicial fan determine the sign of the signature of the Poincar\'e pairing on the cohomology of the associated toric variety. The purpose of the present article is to give an ``elementary" proof of their result.
\end{abstract}

\keywords{simplicial fan, toric variety, signature}

\subjclass{14M25, 52B05, }

\maketitle

\section{Introduction}
N.C.~Leung and V.~Reiner showed in \cite{LR} that certain convexity conditions on a complete rational simplicial fan determine the sign of the signature of the Poincar\'e pairing on the cohomology of the associated toric variety. The purpose of the present article is to give an ``elementary" proof of the following result of \cite{LR}.

Suppose that $\Phi$ is a complete unimodular\footnote{in particular, rational} fan of dimension $2n$ and let $X_\Phi$ denote the corresponding smooth toric variety. Let $\sign(X_\Phi)$ denote the signature of the Poincar\'e pairing on $H(X_\Phi;\mathbb{R})$.

\begin{thm}\label{thm: leung-reiner}
If the fan $\Phi$ is locally convex\footnote{i.e. the support of the star of every ray is convex}, then $(-1)^n\sign(X_\Phi) \geq 0$.
\end{thm}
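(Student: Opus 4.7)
The plan is to work with the combinatorial model
\[
A(\Phi) = \mathbb{R}[x_\rho : \rho \in \Phi(1)]/(I_\Phi + J_\Phi)
\]
of $H^{*}(X_\Phi;\mathbb{R})$, where $I_\Phi$ is the Stanley--Reisner ideal of $\Phi$ and $J_\Phi$ is generated by the linear forms $\sum_\rho \langle m, v_\rho\rangle\,x_\rho$ for $m$ in the dual lattice. With $\deg x_\rho = 1$, the middle cohomology is $A^n(\Phi)$ and the top piece is $A^{2n}(\Phi) \cong \mathbb{R}$, so the Poincar\'e pairing is nothing but multiplication $A^n(\Phi)\otimes A^n(\Phi) \to A^{2n}(\Phi)$; Theorem~\ref{thm: leung-reiner} becomes a statement about the signature of this purely combinatorial symmetric bilinear form. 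The natural auxiliary objects will be the stars $\Star(\rho)$, each a complete unimodular fan of dimension $2n-1$ in $N_{\mathbb{R}}/\mathbb{R}v_\rho$ that inherits local convexity from $\Phi$ at its own rays, together with the distinguished piecewise linear ``hat'' function $\chi_\rho$ on $|\Star(\rho)|$ taking value $1$ on $v_\rho$ and $0$ on every other ray of $\Star(\rho)$; convexity of $\chi_\rho$ is equivalent to the local convexity hypothesis at $\rho$.

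The main argument will proceed by induction on $n$, reducing the signature of the global pairing to signatures of pairings on the stars. Concretely, for a fixed ray $\rho$ I would use an exact sequence of the form
\[
0 \longrightarrow A(\Star(\rho))[-1] \xrightarrow{\;\cdot\, x_\rho\;} A(\Phi) \longrightarrow A(\Phi)/x_\rho A(\Phi) \longrightarrow 0,
\]
identifying the quotient with an algebra attached to the fan obtained from $\Phi$ by removing $\rho$. The pairing on $A^n(\Phi)$ then decomposes, so that its signature splits as a contribution from the image of multiplication by $x_\rho$ (controlled inductively by $\Star(\rho)$, whose dimension $2n-1$ and middle degree $n-1$ account for the correct sign shift) plus a contribution from a complementary summand which I would attack inductively, either by removing further rays or by a second such exact sequence.

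The heart of the proof, and the main obstacle, is the local Hodge--Riemann-type assertion that convexity alone of $\chi_\rho$ forces the corresponding pairing on the image of $\cdot\,x_\rho$ to be semidefinite of the correct sign $(-1)^{n}$. When $\chi_\rho$ is strictly convex this reduces to the toric hard Lefschetz theorem, whose elementary proof in the unimodular setting is accessible; the passage from strict to mere convexity I would handle by perturbing $\chi_\rho$ inside the cone of strictly convex piecewise linear functions on $|\Star(\rho)|$ --- a cone that is non-empty precisely because $|\Star(\rho)|$ is convex --- and invoking semicontinuity of the signature of a continuously varying family of symmetric forms. Summing the resulting local inequalities over the rays and correcting by an inclusion--exclusion among the stars should then deliver $(-1)^n\sign(X_\Phi)\ge 0$.
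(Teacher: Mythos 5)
Your proposal is a plan rather than a proof, and its load-bearing step --- the ``local Hodge--Riemann-type assertion'' that the Poincar\'e form is $(-1)^n$-semidefinite on all of $x_\rho A^{n-1}(\Phi)$ --- is not something hard Lefschetz or Hodge--Riemann will give you, and I believe it is false as stated. Hodge--Riemann gives definiteness only on \emph{primitive} subspaces: writing $\ell$ for the restriction of $-x_\rho$ to $\overline{\Star}(\rho)$ (a convex class of degree one on a fan of dimension $2n-1$) and using the Lefschetz decomposition $A^{n-1}(\overline{\Star}(\rho))=\bigoplus_j \ell^j P^{n-1-j}$, the induced form $(a,b)\mapsto \deg_\Phi(x_\rho a\cdot x_\rho b)=-\deg_{\overline{\Star}(\rho)}(\ell\, a\, b)$ has sign $(-1)^{n-j}$ on the $j$-th piece --- alternating, hence indefinite whenever two consecutive primitive spaces are nonzero. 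A concrete symptom: the class $x_\rho^{\,n}=x_\rho\cdot x_\rho^{\,n-1}$ lies in your subspace and pairs with itself to $D_\rho^{2n}$; what local convexity actually yields (and what the theorem needs) is $(-1)^{1}D_\rho^{2n}\ge 0$, i.e.\ $D_\rho^{2n}\le 0$, which for even $n$ has the \emph{opposite} sign to your claimed $(-1)^n$-semidefiniteness unless $D_\rho^{2n}=0$. The correct sign of each intersection monomial is governed by the number of \emph{distinct} rays appearing in it, not by $n$; a decomposition indexed by a single ray cannot see this. The global assembly is also unsubstantiated: the subspaces $x_\rho A^{n-1}$ for different $\rho$ are neither independent nor mutually orthogonal, so signatures do not add over your proposed ``inclusion--exclusion,'' and the quotient $A(\Phi)/x_\rho A(\Phi)$ corresponds to the non-complete fan $\Phi\smallsetminus\Star(\rho)$, which has no Poincar\'e duality in the naive sense, so the induction does not close.

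The paper takes an entirely different route that is designed precisely to produce the sign $(-1)^k$ per monomial: it first proves $\sign(\Phi)=\sum_i(-1)^i\dim H^{2i}(\Phi)$ by tracking both sides through star subdivisions (weak factorization, Propositions \ref{prop: subdivision signature} and \ref{prop: subdivision euler}), then builds a Chern character and Todd class to get a combinatorial Riemann--Roch and hence the signature theorem $\sign(\Phi)=\int_\Phi L(\Phi)$, and finally expands $L(\Phi)$ so that the statement reduces to $(-1)^k\int_\Phi\phi_{\rho_1}^{2m_1}\cdots\phi_{\rho_k}^{2m_k}\ge 0$, which follows from local convexity via Proposition \ref{prop: nonnegative convex monomials} and Brion's positivity of mixed volumes. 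Two ingredients of your sketch do survive in that argument --- localizing to $\overline{\Star}(\tau)$ via Gysin maps, and the limiting argument from strictly convex to merely convex piecewise linear functions (your semicontinuity step is essentially Proposition \ref{prop: nonnegative convex monomials}) --- but the direct spectral decomposition of the middle intersection form that you propose is not viable without a genuinely new idea replacing the false local semidefiniteness claim.
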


An outline of the argument in \cite{LR} is as follows. By Hirzebruch's Signature Theorem the signature $\sign(X)$ of an oriented $4k$-dimensional manifold $X$ is given by
\[
\sign(X) = \int_X L(X) ,
\]
where $L(X)$ is the Hirzebruch's $L$-class. If $X$ is a complex manifold of dimension $\dim_\mathbb{C} X = 2n$ the $L$ class may be expressed in terms of Chern classes:
\[
L(X) = \sum_{i=0}^{2n} \ch(\Omega^i_X)\Td(X) .
\]
The Riemann-Roch Theorem implies that
\[
\sign(X) = \sum_{i=0}^{2n} \chi(X;\Omega^i_X) .
\]
The relevant term of the $L$-class of the toric variety $X_\Phi$ takes the form
\[
(-1)^n L(X_\Phi) = \sum_{k=1}^n \sum_{\substack{m_1+\dots+m_k = n,\\ m_i>0,\\
    \rho_1,\dots,\rho_k\in\Phi(1),\\
    \rho_{i}\neq \rho_j}} \left(\text{positive constant}\right)  (-1)^k D_{\rho_1}^{2m_1}\cdot\hdots\cdot D_{\rho_k}^{2m_k} ,
\]
where $D_{\rho_i}$ denotes the divisor corresponding to the ray $\rho_i$. It is therefore sufficient to show that $(-1)^k D_{\rho_1}^{2m_1}\cdot\hdots\cdot D_{\rho_k}^{2m_k} \geqslant 0$ when $\Phi$ is locally convex. If the latter condition is fulfilled, $(-1)^k D_{\rho_1}^{2m_1}\cdot\hdots\cdot D_{\rho_k}^{2m_k}$ is equal to the intersection number of a collection of ample toric divisors on the toric variety $\bigcap\limits_i D_{\rho_i}$ and therefore is nonnegative. 

We adapt the strategy outlined above to the setting of fans. Suppose that $\Phi$ is a complete simplicial fan. Then, the cohomology ring $H(\Phi)$ is defined and carries the Poincar\'e pairing whose signature is denoted $\sign(\Phi)$. Moreover, if the fan $\Phi$ is rational, then $H(\Phi) \cong H(X_\Phi;\mathbb{R})$ and $\sign(\Phi) = \sign(X_\Phi)$.

We assume for the rest of the introduction that the fan $\Phi$ has even dimension $\dim\Phi = 2n$. The fan $\Phi$, viewed as a topological space\footnote{with open sets the subfans of $\Phi$}, carries the sheaves $\Omega^i_\Phi$ of finite dimensional vector spaces which satisfy $H^j(\Phi;\Omega^i_\Phi) = 0$ for $i\neq j$ and $H^i(\Phi;\Omega^i_\Phi) = H^{2i}(\Phi)$. Hence, $\sum\limits_i \chi(\Phi;\Omega^i_\Phi) = \sum\limits_i (-1)^i\dim H^{2i}(\Phi)$. In absence of the signature theorem we show directly (cf. Theorem \ref{thm: signature is euler characteristic}) that
\begin{equation}\label{intro: signature is euler} 
\sign(\Phi) = \sum\limits_i (-1)^i\dim H^{2i}(\Phi) = \sum\limits_i \chi(\Phi;\Omega^i_\Phi).
\end{equation}
All the conclusions drawn so far are purely of combinatorial nature. In order to express $\sign(\Phi)$ in terms of intersection theory we assume from this point on that the fan $\Phi$ is unimodular and, in particular, rational.

For a complete unimodular fan $\Phi$ we construct the Chern character $\ch \colon K(\Phi) \to H(\Phi)$, where $K(\Phi)$ denotes the Grothendieck group of sheaves of finite dimensional vector spaces on $\Phi$, so that $\ch(\Omega^i_\Phi)$ coincides with $\ch( \Omega^i_{X_\Phi})$ under the isomorphism $H(\Phi) \cong H(X_\Phi;\mathbb{R})$. The Todd class $\Td(\Phi)$ is defined so that it corresponds to $\Td(X_\Phi)$. These constructions give rise to a Riemann-Roch type theorem (Theorem \ref{thm: RR}) for sheaves of finite dimensional vector spaces and, hence, to the signature theorem (Theorem \ref{thm: signature}).

If $\Phi$ is locally convex, the inequality $(-1)^n\sign(\Phi) \geqslant 0$ (Theorem \ref{thm: sign of signature}) is reduced, using the formula for the $L$-class and a result of M.~Brion, to the positivity of mixed volumes.

A synopsis of the paper is as follows. In Section \ref{section: fans} we recall the basic facts regarding fans. In Section \ref{section: sheaves} we review the theory of sheaves on fans and introduce the relevant examples of such. In Section \ref{section: the grothendieck group} we describe the structure of the Grothendieck group of sheaves of finite dimensional vector spaces on a fan. Section \ref{section: cohomology} is devoted to cohomology of simplicial fans, including the Poincar\'e pairing and the formalism of Gysin maps. In Section \ref{section: signature} we prove the equality \eqref{intro: signature is euler}. In Section \ref{section: riemann-roch} we construct the Chern character on the Grothendieck group of sheaves of finite dimensional vector spaces on a complete unimodular fan and prove a Riemann-Roch type theorem, obtaining Hirzebruch's signature theorem as a corollary. In Section \ref{section: locally convex fans} we apply the above results to locally convex fans and provide a proof of Theorem \ref{thm: leung-reiner}.

\section{Fans}\label{section: fans}

\subsection{Cones}
Suppose that $V$ is a finite dimensional real vector space. In what follows, by ``a cone in $V$'' we shall mean ``a closed convex polyhedral cone with vertex at the origin''. The origin of $V$ is a cone and will be denoted $\oo$. Given two cones $\tau$ and $\sigma$ in $V$ we shall write $\tau\leqslant\sigma$ whenever $\tau$ is a face of $\sigma$. This defines a partial order on the set of cones with the unique minimal element being the origin $\oo$.

For a cone $\sigma$ we denote the dimension of $\sigma$ by $d(\sigma)$. Note that $d(\sigma) = 0$ if and only if $\sigma = \oo$. The assignment $\sigma\mapsto d(\sigma)$ defines a grading on the partially ordered set of cones in $V$.

A one-dimensional cone is called a \emph{ray}. A codimension one face of a cone is called a \emph{facet}.

A cone of dimension $k$ is \emph{simplicial} if it has $k$ one-dimensional faces.

\subsection{Fans}
A \emph{fan $\Phi$ in $V$} is a finite collection of cones in $V$ satisfying
\begin{enumerate}
\item any two cones in $\Phi$ intersect along a common face;
\item if $\sigma\in\Phi$ and $\tau\leqslant\sigma$, then $\tau\in\Phi$.
\end{enumerate}

The \emph{support} of a fan $\Phi$, denoted $|\Phi| \subset V$, is the union of all cones of $\Phi$.

A fan $\Phi$ in $V$ is called \emph{complete} if the union of all cones of $\Phi$ is equal to $V$.

For a fan $\Phi$ and $i\in\mathbb{Z}$ let
\[
\Phi(i) := \{ \sigma\in\Phi \mid d(\sigma) = i \}
\]
For a subset $S \subset \Phi$ we denote by $[S]$ the subfan generated by $S$. In particular, for $\sigma\in\Phi$ let $[\sigma] := [\{\sigma\}]$; let $\partial\sigma := [\sigma]\smallsetminus\{\sigma\}$, the subfan of $[\sigma]$ generated by proper faces of $\sigma$.

\subsection{Fans as topological spaces}\label{subsection: Fans as topological spaces}
Suppose that $\Phi$ is a fan. We shall consider the partially ordered set $\Phi$ as a topological space with open subset the subfans of $\Phi$.

The irreducible\footnote{An open subset is irreducible if it is not a union of non-empty open subsets properly contained in it.} open subsets of $\Phi$ are the subfans $[\sigma]$, $\sigma\in\Phi$. Let $\Irr(\Phi)$ denote the partially ordered (by inclusion) set of irreducible open subsets of $\Phi$. Note that the assignment $\sigma\mapsto [\sigma]$ defines an isomorphism $\Phi \to \Irr(\Phi)$ of partially ordered sets.

For $\tau\in\Phi$ let
\begin{eqnarray*}
\Star(\tau) = \Star_\Phi(\tau) & := & \{ \sigma\in\Phi \mid \sigma\geqslant\tau \} , \\
\bdstar(\tau) = \bdstar_\Phi(\tau) & := & [\Star(\tau)] \smallsetminus \Star(\tau).
\end{eqnarray*}
The subset $\Star(\tau)$ is the closure of $\{\tau\}$, hence a closed subset. Its image under the projection $V \to V/\Span(\tau)$ is a fan denoted $\overline{\Star}(\tau)$. For $\sigma\in\Star(\tau)$ we denote the image of $\sigma$ by $\sigma/\tau\in\overline{\Star}(\tau)$.

\subsection{Rational fans}
Suppose that $\Lambda$ is a finitely generated free abelian group. Let $V = \Lambda\otimes_\mathbb{Z}\mathbb{R}$. Thus, $\Lambda$ is a lattice in $V$. A fan $\Phi$ in $V$ is \emph{rational} if every ray $\rho\in\Phi(1)$ contains a non-zero element of $\Lambda$.

For a ray $\rho\in\Phi(1)$ we denote by $v_\rho$ the primitive vector, i.e. the generator of the monoid $\rho\cap\Lambda$.

\subsection{Unimodular fans}
Suppose that $\Phi$ is a rational simplicial fan in $V = \Lambda\otimes_\mathbb{Z}\mathbb{R}$. The fan $\Phi$ is \emph{unimodular} if for any cone $\sigma\in\Phi$, the set $\{v_\rho \mid \rho\in [\sigma](1) \}$ is a part of a basis of $\Lambda$.

\begin{example}\label{example: Pn}
In what follows we denote by $\mathbb{P}^n$ the complete simplicial fan in $\mathbb{R}^n$ whose rays are generated by the standard basis vectors $e_i$, $i = 1,\ldots,n$ and the vector $-\sum\limits_{i=1}^n e_i$. Thus, $\mathbb{P}^n$ is rational with respect to the standard lattice $\sum\limits_{i=1}^n \mathbb{Z}e_i \subset \mathbb{R}^n$ and unimodular.
\end{example}

\subsection{Subdivisions}
Suppose that $\Phi$ is a fan in $V$. A fan $\Psi$ is said to be a \emph{subdivision of $\Phi$} if it has the same support as $\Phi$ and every cone of $\Phi$ is a union of cones of $\Psi$.

Suppose that $\Psi$ is a subdivision of $\Phi$. The assignment
\[
\Psi\ni\sigma \mapsto \pi(\sigma) := \text{the smallest cone of $\Phi$ containing $\sigma$}
\]
defines a continuous map $\pi \colon \Psi \to \Phi$.

\subsection{Star subdivisions}
Let $\sigma\in\Phi$. Suppose that $\rho$ is a ray in $V$ which has non-trivial intersection with the relative interior of the cone $\sigma$. Then, the collection of cones
\begin{equation}\label{star subdivision}
\Phi\smallsetminus\Star_\Phi(\sigma) \cup \{\xi+\rho \mid \xi\in[\Star(\sigma)]\textnormal{ and }\sigma\cap \xi=\oo \}
\end{equation}
is a fan. Moreover, the fan \eqref{star subdivision} is a subdivision of the fan $\Phi$ called the \emph{star subdivision of $\Phi$ at $\sigma$ along $\rho$}. If $\Phi$ is unimodular, then the fan \eqref{star subdivision} is called a regular star subdivision if $\rho$ is the ray generated by the sum of primitive vectors of the rays of $\sigma$.

\begin{remark}\label{remark: simplicial star subdivision}
If $\Phi$ is simplicial, then so is any star subdivision of $\Phi$. If $\Phi$ is unimodular, then so is any regular star subdivision.
\end{remark}

\begin{thm}[\cite{W} Corollary 8.3, Theorem 13.3]\label{thm: subdivision theorem}
Any two fans $\Phi$ and $\Psi$ with the same support are related by a sequence of star subdivisions. If, in addition, $\Phi$ and $\Psi$ are unimodular, then they are related by a sequence of regular subdivisions.
\end{thm}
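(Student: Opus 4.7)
The plan splits the problem in two stages. First I would reduce the general case (two fans with common support) to the special case of a fan and a subdivision of it. Then I would handle the special case by an induction on a suitable complexity invariant.

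For the reduction, consider the common refinement $\Phi \wedge \Psi$ whose cones are the non-empty intersections $\sigma \cap \tau$ with $\sigma \in \Phi$ and $\tau \in \Psi$. This is a fan supported on $|\Phi| = |\Psi|$ and is simultaneously a subdivision of $\Phi$ and of $\Psi$ (if simpliciality is required, further subdivide into simplicial cones, which can itself be achieved by star subdivisions). Granted the special case, $\Phi \wedge \Psi$ is reached from $\Phi$ by star subdivisions, and likewise from $\Psi$; concatenating the first sequence with the reverse of the second links $\Phi$ to $\Psi$.

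For the special case, assume $\Psi$ is a subdivision of $\Phi$ and induct on the complexity $N(\Phi, \Psi) := \#(\Psi \smallsetminus \Phi)$. If $N = 0$ then $\Phi = \Psi$ and nothing is to be done. Otherwise pick a cone $\sigma \in \Phi$ of minimal dimension with $\sigma \notin \Psi$; by minimality every proper face of $\sigma$ already belongs to $\Psi$, so $\sigma$ is strictly subdivided by $\Psi$ along its relative interior. Choose a ray $\rho \in \Psi(1)$ meeting the relative interior of $\sigma$, and let $\Phi'$ denote the star subdivision of $\Phi$ at $\sigma$ along $\rho$. Each new cone of $\Phi'$ sits inside some cone of $\Phi$, and a direct verification shows it is a union of cones of $\Psi$; thus $\Psi$ is a subdivision of $\Phi'$. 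Since $\rho$ now belongs to $\Phi'$ we have $N(\Phi', \Psi) < N(\Phi, \Psi)$, and the induction closes.

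The main obstacle is the unimodular refinement of the statement, in which only \emph{regular} star subdivisions are permitted. Here the ray $\rho$ introduced at each step is forced to be generated by the sum of the primitive vectors of the rays of $\sigma$, and there is no reason for this particular ray to appear in an arbitrary unimodular refinement $\Psi$ of $\Phi$, so the complexity-reduction argument above breaks down. Circumventing this rigidity is the substantive content of the Morelli--W{\l}odarczyk weak factorization theorem for toric birational maps, whose proof relies on toroidal cobordism and $\pi$-desingularization; I would not attempt to reproduce it here, and would simply invoke \cite{W}.
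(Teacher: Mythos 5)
The paper offers no proof of this statement: it is imported wholesale from W{\l}odarczyk \cite{W} (Corollary 8.3 for the general case, Theorem 13.3 for the unimodular one). Your proposal correctly defers the unimodular case to \cite{W}, but it presents the general case as an elementary induction, and that induction has a genuine gap. The problematic step is the claim that if $\Phi'$ is the star subdivision of $\Phi$ at $\sigma$ along a ray $\rho\in\Psi(1)$ interior to $\sigma$, then $\Psi$ still refines $\Phi'$. This is false in dimension $\geqslant 3$. Take $\sigma = \mathrm{cone}(e_1,e_2,e_3)\subset\mathbb{R}^3$, $\Phi = [\sigma]$, and let $\Psi$ be the triangulation of $\sigma$ with two interior rays $\rho, \rho'$ and maximal cones $\mathrm{cone}(e_1,e_2,\rho)$, $\mathrm{cone}(e_2,\rho',\rho)$, $\mathrm{cone}(e_2,e_3,\rho')$, $\mathrm{cone}(e_3,e_1,\rho')$, $\mathrm{cone}(e_1,\rho,\rho')$ (for generic positions of $\rho,\rho'$ this is a fan with support $\sigma$ whose boundary is unsubdivided). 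The star subdivision of $\Phi$ at $\sigma$ along $\rho$ has maximal cones $\rho+\mathrm{cone}(e_i,e_j)$; for $\Psi$ to refine it, the cone $\mathrm{cone}(e_3,e_1,\rho')$ would have to lie in $\rho+\mathrm{cone}(e_3,e_1)$ and $\mathrm{cone}(e_2,e_3,\rho')$ in $\rho+\mathrm{cone}(e_2,e_3)$, forcing $\rho'$ onto the two-dimensional cone $\rho+\mathrm{cone}(e_3)$, which fails generically. The same example (by symmetry in $\rho,\rho'$) shows that no choice of interior ray repairs the step, so the induction does not close. A second, smaller gap: for non-simplicial $\sigma$ (and the first assertion of the theorem is about arbitrary fans) there need be no ray of $\Psi$ in the relative interior of $\sigma$ at all, e.g.\ the cone over a square split into two simplicial cones by an interior two-dimensional face.

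The underlying point is that the true statement is only a zigzag statement: $\Phi$ and $\Psi$ are joined by a chain of fans in which each consecutive pair is related by a star subdivision \emph{in one direction or the other} (this is exactly how the theorem is used in the proof of Theorem \ref{thm: signature is euler characteristic}). Your reduction implicitly upgrades this to ``every refinement is reached from $\Phi$ by forward star subdivisions only,'' which is a strictly stronger assertion and is not available by this argument; the example above already exhibits a refinement of $[\sigma]$ differing from an iterated star subdivision by a bistellar flip. Both halves of the theorem, not just the unimodular one, carry real content and should simply be cited from \cite{W} (or, for the first half, from the classical Alexander--Ewald circle of results), as the paper does.
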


\section{Sheaves hit the fans}\label{section: sheaves}

\subsection{Sheaves of vector spaces}
Let $\Sh(\Phi)$ denote the category of sheaves of real vector spaces on  $\Phi$ considered as a topological space as in \ref{subsection: Fans as topological spaces}. Let $\Sh_{fin}(\Phi)$ denote the full subcategory of sheaves of finite dimensional vector spaces.

For $\sigma\in\Phi$
\begin{itemize}
\item let $i_{[\sigma]} \colon [\sigma] \hookrightarrow \Phi$ denote the open embedding of the of the irreducible open set,
\item let $i_\sigma \colon \{\sigma\} \hookrightarrow \Phi$ denote the inclusion; the embedding $i_\sigma$ is locally closed and closed (respectively, open) if and only if $\sigma$ is maximal (respectively, minimal, i.e. $\sigma = \oo$).
\end{itemize}

\begin{prop}
{~}
\begin{enumerate}
\item For any $\sigma\in\Phi$ and any vector space $W$ the sheaf $i_{[\sigma] !}W = W_{[\sigma]}$ is projective.
\item For any $\sigma\in\Phi$ and any vector space $W$ the sheaf $i_{\sigma *}W = W_{\Star(\sigma)}$ is injective.
\item The categories $\Sh(\Phi)$ and $\Sh_{fin}(\Phi)$ have enough projectives.
\item The categories $\Sh(\Phi)$ and $\Sh_{fin}(\Phi)$ have enough injectives.
\end{enumerate}
\end{prop}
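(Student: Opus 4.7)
The plan is to work with the identification of $\Sh(\Phi)$ with the category of contravariant functors from the poset $\Phi$ to vector spaces, where $F\mapsto (\sigma\mapsto F_\sigma)$ and the transition map $F_\sigma\to F_\tau$ for $\tau\leqslant\sigma$ is the restriction along $[\tau]\subset[\sigma]$. Under this identification the stalk functor $F\mapsto F_\sigma$ is exact for every $\sigma$, being just evaluation on the smallest open $[\sigma]$ containing $\sigma$. I would begin by recording this and by verifying the stalk descriptions $(i_{[\sigma]!}W)_\tau = W$ if $\tau\leqslant\sigma$ and $0$ otherwise, and $(i_{\sigma*}W)_\tau = W$ if $\tau\geqslant\sigma$ and $0$ otherwise, which justify the notations $W_{[\sigma]}$ and $W_{\Star(\sigma)}$.

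For (1) and (2), the natural move is to use the standard adjunctions $i_{[\sigma]!}\dashv i_{[\sigma]}^{-1}$ and $i_\sigma^{-1}\dashv i_{\sigma*}$. Since $[\sigma]$ is irreducible, global sections on $[\sigma]$ equals the stalk at $\sigma$, so these adjunctions yield
\[
\Hom(W_{[\sigma]},F) \;\cong\; \Hom(W,F_\sigma), \qquad \Hom(F,W_{\Star(\sigma)}) \;\cong\; \Hom(F_\sigma,W).
\]
The right-hand sides are composites of the exact stalk functor $F\mapsto F_\sigma$ with the exact functors $\Hom(W,-)$ and $\Hom(-,W)$ on vector spaces, so each side is exact in $F$. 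This shows $W_{[\sigma]}$ is projective and $W_{\Star(\sigma)}$ is injective.

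For (3) and (4), given any $F\in\Sh(\Phi)$ I would form the finite direct sums
\[
P(F) \;=\; \bigoplus_{\sigma\in\Phi}(F_\sigma)_{[\sigma]}, \qquad I(F) \;=\; \bigoplus_{\sigma\in\Phi}(F_\sigma)_{\Star(\sigma)},
\]
together with the natural morphisms $P(F)\to F$ and $F\to I(F)$ whose components at $\sigma$ are defined, via the two adjunctions above, by the identity of $F_\sigma$. Computing stalks at $\tau$ yields
\[
P(F)_\tau = \bigoplus_{\sigma\geqslant\tau}F_\sigma \twoheadrightarrow F_\tau, \qquad F_\tau \hookrightarrow \bigoplus_{\sigma\leqslant\tau}F_\sigma = I(F)_\tau,
\]
and in each case the component indexed by $\sigma=\tau$ is the identity on $F_\tau$; hence $P(F)\to F$ is surjective and $F\to I(F)$ is injective. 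Because $\Phi$ is finite, the sums are finite, so when $F\in\Sh_{fin}(\Phi)$ both $P(F)$ and $I(F)$ lie in $\Sh_{fin}(\Phi)$ as well.

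No genuine obstacle is expected: the entire argument rests on the purely formal features of the Alexandrov topology on $\Phi$ and on the exactness of the stalk functors. The one point that merits care is the verification of the stalks of $i_{[\sigma]!}W$ and $i_{\sigma*}W$, because (unlike the pushforward) the extension by zero on a non-Hausdorff space is not literally ``zero outside $[\sigma]$ and $W$ on $[\sigma]$'' as a presheaf—one must sheafify, or equivalently compute stalks from the adjunction, to obtain the claimed formula.
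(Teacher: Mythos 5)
Your proposal is correct and follows essentially the same route as the paper: parts (1) and (2) via the adjunction isomorphisms $\Hom(W_{[\sigma]},F)\cong\Hom_{\mathbb R}(W,F_\sigma)$ and $\Hom(F,W_{\Star(\sigma)})\cong\Hom_{\mathbb R}(F_\sigma,W)$ together with exactness of the stalk functors, and parts (3) and (4) via the canonical epimorphism $\bigoplus_{\sigma}(F_\sigma)_{[\sigma]}\to F$ and monomorphism $F\to\bigoplus_{\sigma}i_{\sigma*}F_\sigma$ (the paper writes a product in the latter, which agrees with your direct sum since $\Phi$ is finite). The extra care you take with the stalks of $i_{[\sigma]!}W$ and $i_{\sigma*}W$ is sound but not a point of divergence.
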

\begin{proof}
{~}
\begin{enumerate}
\item The functor $\Sh(\Phi) \to \mathbb{R}-\mathrm{mod}$ given by $F \mapsto \Hom_{\Sh(\Phi)}(W_{[\sigma]}, F) = \Hom_\mathbb{R}(W,F_\sigma)$ is exact.

\item The functor $\Sh(\Phi)^\mathrm{op} \to \mathbb{R}-\mathrm{mod}$ given by $F \mapsto \Hom_{\Sh(\Phi)}(F, W_{\Star(\sigma)}) = \Hom_\mathbb{R}(F_\sigma, W)$ is exact.

\item For $F\in \Sh(\Phi)$ the canonical map $\bigoplus\limits_{\sigma\in\Phi} (F_\sigma)_{[\sigma]} \to F$ is an epimorphism.

\item For $F\in \Sh(\Phi)$ the canonical map $F \to \prod\limits_{\sigma\in\Phi} i_{\sigma *}F_\sigma$ is a monomorphism.
\end{enumerate}
\end{proof}

For $F\in\Sh(\Phi)$ the \emph{support of $F$}, denoted $\Supp(F)$ is defined by
\[
\Supp(F) = \{ \sigma\in\Phi \mid F_\sigma\neq 0 \} .
\]

Recall that a sheaf $F$ on a space $X$ is \emph{flabby} if for any open subset $U \subset X$ the restriction map $F(X) \to F(U)$ is surjective.

\subsection{The cellular complex}
Suppose that $\Phi$ is a fan in a vector space $V$ of dimension $n := \dim_\mathbb{R} V$. For $F\in \Sh(\Phi)$ let
\[
C^i(\Phi;F) := \bigoplus\limits_{\sigma\in\Phi(n-i)} F_\sigma .
\]
In order to define the differential
\begin{equation}\label{cellular differential}
d^{i} \colon C^i(\Phi;F) \to C^{i+1}(\Phi;F)
\end{equation}
we fix a choice of an orientation for each cone $\sigma\in\Phi$.

For $\sigma\in\Phi$ and a facet $\tau\in [\sigma](d(\sigma)-1)$ the sign $\epsilon_{\sigma\tau}$ is defined by
\[
\epsilon_{\sigma\tau} := \begin{cases} +1 & \text{if orientations of $\sigma$ and $\tau$ agree} \\ -1 & \text{otherwise} \end{cases}
\]
The map \eqref{cellular differential} is defined as the sum of the restriction maps with signs:
\[
d^{i} = \bigoplus\limits_{\sigma\in\Phi(n-i)} \sum\limits_{\tau\in [\sigma](d(\sigma)-1)} \epsilon_{\sigma\tau}\cdot (F_\sigma \to F_\tau) .
\]
The assignment $F \mapsto C(\Phi;F)$ defines an exact functor on $\Sh(\Phi)$ with values in the category of complexes of vector spaces.

\begin{prop}[\cite{ICNP}, Proposition 3.5]
Suppose that $\Phi$ is a complete fan. Then, $C(\Phi; \bullet)$ and $\mathbf{R}\Gamma(\Phi;\bullet)$ are canonically isomorphic as functors $D^b(\Sh(\Phi)) \to D^b(\mathbb{R}-\mathrm{mod})$.
\end{prop}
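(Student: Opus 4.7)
The plan is to identify $C(\Phi; -)$ with the right derived functor $\mathbf{R}\Gamma(\Phi; -)$ via its universal property. The functor $F \mapsto C^\bullet(\Phi; F)$ is manifestly exact in $F$, since stalks and finite direct sums are both exact operations, so it descends to a triangulated functor on $D^b(\Sh(\Phi))$ and carries short exact sequences of sheaves to long exact sequences of cohomology. I define a natural transformation $\alpha_F \colon \Gamma(\Phi; F) \to H^0(C^\bullet(\Phi; F))$ by sending a section $s$ to the tuple of stalks $(s_\sigma)_{\sigma \in \Phi(n)}$; its image lies in $\ker d^0$ precisely because $\Phi$ is complete, so every $\tau \in \Phi(n-1)$ is a facet of exactly two maximal cones with opposite induced orientations, and the contributions to $(d^0\alpha(s))_\tau$ cancel.

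The core calculation is that on each standard injective $I = W_{\Star(\sigma)} = i_{\sigma *}W$, the map $\alpha_I$ is an isomorphism and $H^k(C^\bullet(\Phi; I)) = 0$ for $k > 0$. Indeed, $\Gamma(\Phi; W_{\Star(\sigma)}) = W$, and the bijection $\tau \mapsto \tau/\sigma$ induces an isomorphism of complexes $C^\bullet(\Phi; W_{\Star(\sigma)}) \cong C^\bullet(\overline{\Star}(\sigma); W)$, where $\overline{\Star}(\sigma)$ is a complete fan of dimension $m := n - d(\sigma)$ in $V/\Span(\sigma)$. The claim thus reduces to the key lemma: for any complete fan $\Psi$ of dimension $m$, one has $H^i(C^\bullet(\Psi; W)) = W$ for $i = 0$ and $= 0$ for $i > 0$.

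The key lemma is the main obstacle, and I would prove it by identifying $C^\bullet(\Psi; W)$ with the augmented cellular chain complex of the regular CW decomposition of $S^{m-1}$ obtained by intersecting each nonzero cone of $\Psi$ with the unit sphere (so that a cone $\tau \in \Psi(k)$, $k \geq 1$, contributes a $(k-1)$-cell and the zero cone provides the augmentation). Under the reindexing $C^i \leftrightarrow C_{m-1-i}$, its cohomology equals the reduced homology $\tilde{H}_{m-1-i}(S^{m-1}; W)$, which vanishes except in top degree $m-1$, giving $H^0(C^\bullet(\Psi; W)) = W$ and $H^{>0} = 0$. The main technical task is a careful bookkeeping of signs to match the combinatorial differential of $C^\bullet(\Psi; W)$ with the cellular boundary on $S^{m-1}$, including at the augmentation from one-dimensional cones to the origin; this requires choosing compatible orientations on all cones of $\Psi$.

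Finally, the derived-category isomorphism for arbitrary $F \in D^b(\Sh(\Phi))$ follows by a standard spectral sequence argument. Choose an injective resolution $F \to I^\bullet$ with each $I^k$ a direct sum of standard injectives $i_{\sigma *}W$, form the first-quadrant double complex $C^\bullet(\Phi; I^\bullet)$, and compare its two convergent spectral sequences: by exactness of $C^\bullet(\Phi; -)$, the one taking vertical cohomology first collapses to $C^\bullet(\Phi; F)$; by the vanishing and isomorphism established on standard injectives, the one taking horizontal cohomology first collapses to $\Gamma(\Phi; I^\bullet) \simeq \mathbf{R}\Gamma(\Phi; F)$. These collapses are natural in $F$, yielding the canonical isomorphism of functors $D^b(\Sh(\Phi)) \to D^b(\mathbb{R}-\mathrm{mod})$.
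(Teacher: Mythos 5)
Your proposal is correct and follows essentially the same route as the source the paper cites for this statement (the paper gives no proof of its own, importing it from [BrL2, Proposition 3.5]): one checks exactness of $C^\bullet(\Phi;\bullet)$, identifies $C^\bullet(\Phi; i_{\sigma *}W)$ with $C^\bullet(\overline{\Star}(\sigma); W)$, computes the latter as the augmented cellular chain complex of the sphere cut out by the complete fan $\overline{\Star}(\sigma)$, and concludes by the double-complex comparison on an injective resolution. The one point to keep explicit is that the tuple $(s_\sigma)_{\sigma\in\Phi(n)}$ is a cocycle only when the maximal cones are oriented coherently by a fixed orientation of $V$ (otherwise the natural map must carry signs $\epsilon_\sigma$ comparing each chosen orientation with that of $V$), which you have already flagged as part of the sign bookkeeping.
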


\subsection{Examples of sheaves}\label{subsection: Examples of sheaves}
Suppose that $\Phi$ is fan in $V$.

For $\sigma\in\Phi$ let
\begin{itemize}
\item $\dual{\sigma} := \dual{\Span(\sigma)}$
\item $\sigma^\perp := \{f\in\dual{V} \mid f\vert_\sigma = 0 \} \subset \dual{V}$ .
\end{itemize}
Note that if $\tau\leqslant\sigma$, then $\sigma^\perp\subseteq\tau^\perp$.

Let $\Omega^1_\Phi$ denote the subsheaf of $\dual{V}_\Phi$ defined by assignment
\[
\Omega^1_\Phi\colon\sigma \mapsto \Omega^1_{\Phi,\sigma} = \sigma^\perp .
\]
Let $\Omega^0_\Phi = \mathbb{R}_\Phi$ and let $\Omega^q_\Phi = \bigwedge^q\Omega^1_\Phi$ for $q=1,2,\ldots$.

The sheaf $\mathcal{G}$ is defined by the short exact sequence
\[
0 \to \Omega^1_\Phi \to \dual{V}_\Phi \to \mathcal{G} \to 0 .
\]
Since $\mathcal{G}_\sigma \cong \dual{\sigma}$, sections of $\mathcal{G}$ are continuous cone-wise linear functions. The restriction maps $\dual{\sigma} \to \dual{\rho}$, $\rho\in[\sigma](1)$, give rise to the morphism of sheaves
\begin{equation}\label{cone-wise linear functions to rays}
\mathcal{G} \to \bigoplus\limits_{\rho\in\Phi(1)} i_{\rho *}\dual{\rho} = \bigoplus\limits_{\rho\in\Phi(1)} \dual{\rho}_{\Star(\rho)}.
\end{equation}

\begin{lemma}\label{lemma: cone-wise linear functions to rays}
The map \eqref{cone-wise linear functions to rays} is a monomorphism and an isomorphism if an only if $\Phi$ is simplicial.
\end{lemma}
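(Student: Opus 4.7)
The map in question is a morphism of sheaves, so I would check it stalkwise. The stalk of $\mathcal{G}$ at $\sigma\in\Phi$ is $\dual{\sigma}$, while the stalk of $i_{\rho*}\dual{\rho}=\dual{\rho}_{\Star(\rho)}$ at $\sigma$ is $\dual{\rho}$ if $\rho\leqslant\sigma$ and $0$ otherwise. Thus the stalk of the map at $\sigma$ takes the form
\[
\dual{\sigma} \to \bigoplus_{\rho\in[\sigma](1)} \dual{\rho},
\]
where each component is the restriction of a linear functional on $\Span(\sigma)$ to $\Span(\rho)$. The problem is therefore reduced to a statement of elementary linear algebra concerning a single cone $\sigma$.

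For injectivity I would use the fact that a (strictly convex) polyhedral cone $\sigma$ is the convex hull of its extreme rays, so $\Span(\sigma)$ is spanned as a real vector space by the lines $\Span(\rho)$ with $\rho\in[\sigma](1)$. Consequently, a linear functional on $\Span(\sigma)$ that vanishes on every $\Span(\rho)$ must vanish identically, giving injectivity of the stalkwise restriction map regardless of simpliciality. This yields injectivity of the original sheaf morphism.

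For the second assertion I would do a dimension count. The source has dimension $\dim_\mathbb{R}\dual{\sigma}=d(\sigma)$, the target has dimension $\#[\sigma](1)$, and by definition $\sigma$ is simplicial precisely when these two numbers coincide. Combined with the injectivity of the stalkwise map, surjectivity at $\sigma$ is equivalent to $d(\sigma)=\#[\sigma](1)$. Since the map is an isomorphism of sheaves iff it is so at every stalk, and since every cone of a simplicial fan is simplicial, the ``if'' direction follows; for the ``only if'' direction it suffices to observe that if some $\sigma\in\Phi$ fails to be simplicial, already the stalkwise map at that $\sigma$ fails to be surjective.

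I do not expect any serious obstacle: the only subtlety is the stalk of $i_{\rho *}\dual{\rho}$ away from $\Star(\rho)$, which vanishes and so contributes nothing, reducing the whole statement to the spanning property of extreme rays plus a dimension count.
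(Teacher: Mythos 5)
Your proof is correct and follows essentially the same route as the paper: reduce to stalks, observe that the rays of $\sigma$ span $\Span(\sigma)$ so the dual restriction map $\dual{\sigma}\to\bigoplus_{\rho\in[\sigma](1)}\dual{\rho}$ is injective, and conclude the isomorphism statement by the dimension count $d(\sigma)=\#[\sigma](1)$ characterizing simpliciality. You in fact spell out the ``only if'' direction, which the paper leaves implicit.
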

\begin{proof}
Let $\sigma\in\Phi$. Since the rays of $\sigma$ generate $\Span(\sigma)$ it follows that the map $\bigoplus\limits_{\rho\in[\sigma](1)} \Span(\rho) \to \Span(\sigma)$ is surjective. Hence, the dual map $\mathcal{G}_\sigma \cong \dual{\sigma} \to \bigoplus\limits_{\rho\in [\sigma](1)} \dual{\rho}$ is injective.
If $\sigma$ is simplicial, then the cardinality of $[\sigma](1)$ coincides with the dimension of $\sigma^\vee$, hence the map $\dual{\sigma} \to \bigoplus\limits_{\rho\in [\sigma](1)} \dual{\rho}$ is an isomorphism.
\end{proof}

For $\rho\in\Phi(1)$ let
\begin{equation}\label{line bundles associated to rays}
\mathcal{O}(\rho) := \ker(\dual{\rho}_\Phi \to i_{\rho *}\dual{\rho}) .
\end{equation}
For $\sigma\in\Phi$ let
\begin{equation}\label{line bundles associated to cones}
\mathcal{O}(\sigma) := \bigotimes\limits_{\rho\in[\sigma](1)} \mathcal{O}(\rho) .
\end{equation}
By convention we set $\mathcal{O} = \mathcal{O}(\oo) = \mathbb{R}_\Phi$.

\begin{remark}
The sheaves $\mathcal{O}(\rho)$, $\rho\in\Phi(1)$, play a key role in the construction of the Chern character map (cf. \ref{subsection: chern character}). For $\Phi$ rational they are analogs of the line bundles $\mathcal{O}(-D_\rho)$, where $D_\rho$ is the irreducible divisor which corresponds to the ray $\rho$, on the toric variety $X_\Phi$ .
\end{remark}

\begin{lemma}\label{lemma: small to big}
The diagram
\[
\begin{CD}
\dual{V}_\Phi @>>> \mathcal{G} \\
@VVV @VV{\eqref{cone-wise linear functions to rays}}V \\
\bigoplus\limits_{\rho\in\Phi(1)} \dual{\rho}_\Phi @>>> \bigoplus\limits_{\rho\in\Phi(1)} i_{\rho *}\dual{\rho}
\end{CD}
\]
is commutative.
\end{lemma}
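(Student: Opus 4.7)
The plan is simply to verify commutativity stalkwise at each $\sigma \in \Phi$; the argument reduces to the transitivity of restriction of linear functionals, so no genuine obstacle is anticipated --- the main work is just unpacking what each arrow does at a stalk.

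Fix $\sigma \in \Phi$. The stalk of $\mathcal{G}$ at $\sigma$ is $\dual{\sigma}$, and the top horizontal map is the restriction $f \mapsto f|_{\Span(\sigma)}$ coming from the defining short exact sequence of $\mathcal{G}$. For each $\rho \in \Phi(1)$, the stalk $(i_{\rho *}\dual{\rho})_\sigma$ equals $\dual{\rho}$ if $\rho \in [\sigma](1)$ and vanishes otherwise. Consequently, at $\sigma$ the right vertical map becomes the sum of restrictions $\dual{\sigma} \to \bigoplus_{\rho \in [\sigma](1)} \dual{\rho}$, while the bottom horizontal map becomes the canonical projection $\bigoplus_{\rho \in \Phi(1)} \dual{\rho} \to \bigoplus_{\rho \in [\sigma](1)} \dual{\rho}$. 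The left vertical map at $\sigma$ is the componentwise restriction $f \mapsto (f|_\rho)_{\rho \in \Phi(1)}$ of the obvious morphism $\dual{V}_\Phi \to \bigoplus_{\rho \in \Phi(1)} \dual{\rho}_\Phi$.

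It remains to trace a section $f \in \dual{V}$ through both paths. The right-then-down composition sends
\[
f \;\longmapsto\; f|_{\Span(\sigma)} \;\longmapsto\; \bigl(f|_{\Span(\sigma)}|_{\Span(\rho)}\bigr)_{\rho \in [\sigma](1)},
\]
which equals $(f|_\rho)_{\rho \in [\sigma](1)}$ because each ray $\rho$ of $\sigma$ lies in $\Span(\sigma)$ and restriction is transitive. The down-then-right composition sends $f \mapsto (f|_\rho)_{\rho \in \Phi(1)}$ and then projects onto the summands indexed by $[\sigma](1)$, yielding the same tuple $(f|_\rho)_{\rho \in [\sigma](1)}$. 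The two compositions agree at every stalk $\sigma$, which establishes the commutativity of the diagram.
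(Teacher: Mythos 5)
Your proof is correct and takes essentially the same approach as the paper: both check commutativity on stalks at each $\sigma\in\Phi$, identify the stalk diagram as $\dual{V}\to\dual{\sigma}\to\bigoplus_{\rho\in[\sigma](1)}\dual{\rho}$ versus $\dual{V}\to\bigoplus_{\rho\in\Phi(1)}\dual{\rho}\to\bigoplus_{\rho\in[\sigma](1)}\dual{\rho}$ with the natural projection, and conclude by transitivity of restriction. You simply spell out the stalk computations in more detail than the paper does.
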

\begin{proof}
For $\sigma\in\Phi$ the corresponding diagram
\[
\begin{CD}
\dual{V} @>>> \dual{\sigma} \\
@VVV @VVV \\
\bigoplus\limits_{\rho\in\Phi(1)} \dual{\rho} @>\textnormal{pr}>> \bigoplus\limits_{\rho\in[\sigma](1)}\dual{\rho}
\end{CD}
\]
of maps of stalks at $\sigma$ is commutative where 
\[\textnormal{pr}:\bigoplus\limits_{\rho\in\Phi(1)} \dual{\rho}\to \bigoplus\limits_{\rho\in[\sigma](1)}\dual{\rho}\]
is the natural projection.
\end{proof}

It follows from Lemma \ref{lemma: small to big} that the composition
\[
\Omega^1_\Phi = \ker(\dual{V}_\Phi \to \mathcal{G}) \to \dual{V}_\Phi \to \bigoplus\limits_{\rho\in\Phi(1)} \dual{\rho}_\Phi
\]
factors canonically through $\bigoplus\limits_{\rho\in\Phi(1)}\mathcal{O}(\rho) = \ker\left(\bigoplus\limits_{\rho\in\Phi(1)} \dual{\rho}_\Phi \to \bigoplus\limits_{\rho\in\Phi(1)} i_{\rho *}\dual{\rho}\right)$ so that the diagram
\begin{equation}\label{map of ses}
\begin{CD}
0 @>>> \Omega^1_\Phi @>>> \dual{V}_\Phi @>>> \mathcal{G} @>>> 0 \\
& & @VVV @VVV @VV{\eqref{cone-wise linear functions to rays}}V \\
0 @>>> \bigoplus\limits_{\rho\in\Phi(1)}\mathcal{O}(\rho) @>>> \bigoplus\limits_{\rho\in\Phi(1)} \dual{\rho}_\Phi @>>> \bigoplus\limits_{\rho\in\Phi(1)} i_{\rho *}\dual{\rho}  @>>> 0
\end{CD}
\end{equation}
with exact rows is commutative, giving rise to the complex
\begin{equation}\label{complex forms big}
\Omega^1_\Phi \to \bigoplus\limits_{\rho\in\Phi(1)}\mathcal{O}(\rho) \to \coker\left(\dual{V}_\Phi \to \bigoplus\limits_{\rho\in\Phi(1)} \dual{\rho}_\Phi\right) .
\end{equation}

\begin{prop}
Suppose that $\Phi$ is a complete simplicial fan. Then,
\begin{enumerate}
\item $\coker\left(\dual{V} \to \bigoplus\limits_{\rho\in\Phi(1)} \dual{\rho}\right) \cong H^1(\Phi;\Omega^1_\Phi)$

\item The sequence
\begin{equation}\label{ses forms big}
0 \to \Omega^1_\Phi \to \bigoplus\limits_{\rho\in\Phi(1)}\mathcal{O}(\rho) \to H^1(\Phi;\Omega^1_\Phi)_\Phi \to 0
\end{equation}
deduced from \eqref{complex forms big} is exact.
\end{enumerate}
\end{prop}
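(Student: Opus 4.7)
My plan is to derive both statements simultaneously from the commutative diagram \eqref{map of ses} together with the long exact sequence of sheaf cohomology for its top row.

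For (1), I take the long exact sequence in sheaf cohomology of the short exact sequence $0 \to \Omega^1_\Phi \to \dual{V}_\Phi \to \mathcal{G} \to 0$. Since $\Phi$ is simplicial, Lemma \ref{lemma: cone-wise linear functions to rays} identifies $\mathcal{G}$ with $\bigoplus_{\rho\in\Phi(1)} i_{\rho *}\dual{\rho}$, a direct sum of injective sheaves; hence $H^1(\Phi;\mathcal{G}) = 0$ while $H^0(\Phi;\mathcal{G}) = \bigoplus_\rho \dual{\rho}$, and the induced map $\dual{V} \to \bigoplus_\rho \dual{\rho}$ is the restriction $f \mapsto (f|_\rho)_\rho$. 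The remaining input is the vanishing $H^1(\Phi;\dual{V}_\Phi) = 0$, which expresses the acyclicity of the constant sheaf on a complete fan; this follows from the cellular description of $\mathbf{R}\Gamma$ together with the fact that the order complex of $\Phi$ is a cone with apex the minimum element $\oo$. The long exact sequence therefore shortens to
\[
0 \to \dual{V} \to \bigoplus_\rho \dual{\rho} \to H^1(\Phi;\Omega^1_\Phi) \to 0,
\]
establishing (1).

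For (2), I apply the snake lemma to \eqref{map of ses}. Writing $f, g, h$ for the three vertical maps, Lemma \ref{lemma: cone-wise linear functions to rays} says $h$ is an isomorphism, so $\ker h = \coker h = 0$ and the snake lemma yields $\ker f \cong \ker g$ and $\coker f \cong \coker g$. The middle map $g\colon \dual{V}_\Phi \to \bigoplus_\rho \dual{\rho}_\Phi$ is a morphism of constant sheaves induced by restriction $\dual{V} \to \bigoplus_\rho \dual{\rho}$; its kernel vanishes since the rays span $V$ (completeness), and its cokernel is the constant sheaf with value $\coker(\dual{V} \to \bigoplus_\rho \dual{\rho})$. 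Invoking (1) to identify this cokernel as $H^1(\Phi;\Omega^1_\Phi)$ produces the asserted short exact sequence \eqref{ses forms big}.

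The only delicate point is the vanishing $H^1(\Phi;\dual{V}_\Phi) = 0$; everything else is a formal consequence of the snake lemma, Lemma \ref{lemma: cone-wise linear functions to rays}, and the injectivity of $\mathcal{G}$. If desired, this vanishing can be verified directly from the cellular complex $C^\bullet(\Phi;\dual{V}_\Phi)$, exploiting completeness of $\Phi$ to exhibit any $1$-cocycle as a coboundary.
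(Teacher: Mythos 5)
Your argument is correct and follows essentially the same route as the paper: the long exact sequence of $0\to\Omega^1_\Phi\to\dual{V}_\Phi\to\mathcal{G}\to 0$ for part (1), and the snake lemma applied to \eqref{map of ses} for part (2). The only cosmetic difference is that the paper obtains $H^{i}(\Phi;\dual{V}_\Phi)=0$ for $i\neq 0$ directly from the injectivity of the constant sheaf $\dual{V}_\Phi = i_{\oo *}\dual{V}$, so no cellular-complex or order-complex argument is needed for that vanishing.
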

\begin{proof}
{~}
\begin{enumerate}
\item The long exact sequence in cohomology associated to the short exact sequence of sheaves
\[
0 \to \Omega^1_\Phi \to \dual{V}_\Phi \to \mathcal{G} \to 0 .
\]
reduces to the short exact sequence
\[
0 \to \dual{V} \to \bigoplus\limits_{\rho\in\Phi(1)}\dual{\rho} \to H^1(\Phi;\Omega^1_\Phi) \to 0
\]
using
\begin{itemize}
\item $H^0(\Phi;\dual{V}_\Phi) = \dual{V}_\Phi$ and $H^i(\Phi;\dual{V}_\Phi) = 0$ for $i\neq 0$ since $\dual{V}_\Phi$ is injective;
\item $\mathcal{G} \cong \bigoplus\limits_{\rho\in\Phi(1)} i_{\rho *}\dual{\rho}$, hence $H^0(\Phi;\mathcal{G}) = \bigoplus\limits_{\rho\in\Phi(1)}\dual{\rho}$ and $H^i(\Phi;\mathcal{G}) = 0$ for $i\neq 0$ since $\mathcal{G}$ is injective;
\item $H^0(\Phi;\Omega^1_\Phi) = 0$ since $\Omega^1_\Phi$ is not supported on top-dimensional cones.
\end{itemize}

\item Under the present assumptions 
\begin{itemize}
\item the vertical maps in \eqref{map of ses} are monomorphisms,
\item the map \eqref{cone-wise linear functions to rays} is an isomorphism.
\end{itemize}
The snake lemma implies that the map
\[
\coker\left(\Omega^1_\Phi \to \bigoplus\limits_{\rho\in\Phi(1)}\mathcal{O}(\rho)\right) \to \coker\left(\dual{V} \to \bigoplus\limits_{\rho\in\Phi(1)} \dual{\rho}\right) \cong H^1(\Phi;\Omega^1_\Phi)
\]
is an isomorphism.
\end{enumerate}
\end{proof}

\subsection{The structure sheaf}\label{subsection: the structure sheaf}
Let $\mathcal{A}_\Phi := \mathrm{S}(\mathcal{G}(-2))$ denote the symmetric algebra on $\mathcal{G}(-2)$. The sheaf $\mathcal{A}_\Phi$ is a sheaf of graded algebras of continuous cone-wise polynomial functions, graded so that cone-wise linear functions have degree two.

\begin{lemma}[\cite{ICNP}, Lemma 4.6]
The sheaf $\mathcal{A}_\Phi$ is flabby if an only if $\Phi$ is simplicial.
\end{lemma}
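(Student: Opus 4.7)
I would reduce flabbiness of $\mathcal{A}_\Phi$ to the local surjectivity of the restriction $\mathcal{A}_\Phi([\sigma]) \to \mathcal{A}_\Phi(\partial\sigma)$ for every $\sigma \in \Phi$. This reduction is the standard inductive argument: given subfans $V \subset U$ and a section on $V$, one extends to $U$ by adjoining cones of $U \smallsetminus V$ in order of increasing dimension, so that at each step $\partial\sigma$ already lies inside the subfan over which the section is defined. Under the identifications $\mathcal{A}_\Phi([\sigma]) = \mathrm{Sym}(\dual{\sigma})$ (polynomials on $\Span(\sigma)$) and $\mathcal{A}_\Phi(\partial\sigma) = $ compatible families $(p_\tau)_{\tau < \sigma}$ with $p_\tau \in \mathrm{Sym}(\dual{\tau})$ under restrictions, this becomes a purely local extension question at each cone.

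For the ``if'' direction, assume $\Phi$ is simplicial and fix $\sigma$ of dimension $k$ with rays $\rho_1,\ldots,\rho_k$. The primitive generators form a basis of $\Span(\sigma)$, so $\mathrm{Sym}(\dual{\sigma}) = \mathbb{R}[\ell_1,\ldots,\ell_k]$ for the dual basis $\ell_i$, faces are $\tau_I = \sum_{i\in I}\rho_i$ for $I \subset [k]$, and restriction $[\sigma] \to \tau_I$ is $\ell_j \mapsto 0$ for $j \notin I$. Given a compatible family $(p_I)_{I \subsetneq [k]}$, I would write down the explicit extension
\[
P \;=\; \sum_{I \subsetneq [k]} (-1)^{k-1-|I|} p_I \;\in\; \mathbb{R}[\ell_1,\ldots,\ell_k],
\]
and verify $P|_{\tau_J} = p_J$ for every $J \subsetneq [k]$ by using the compatibility $p_I|_{\tau_{I\cap J}} = p_{I\cap J}$ and regrouping the sum according to $I \cap J$; the inner sum over $S \subset [k]\smallsetminus J$ vanishes unless $I \cap J = J$, in which case it contributes $1$. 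This settles surjectivity at every $\sigma$, hence flabbiness.

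For the ``only if'' direction, suppose $\Phi$ has a non-simplicial cone and pick $\sigma$ non-simplicial of minimal dimension; then every proper face of $\sigma$ is simplicial, so $\partial\sigma$ is a simplicial fan. The decomposition $\mathcal{A}_\Phi = \bigoplus_{k\geq 0} \mathcal{A}_\Phi^k$ by homogeneous degree is preserved by restriction, so flabbiness of $\mathcal{A}_\Phi$ forces flabbiness of each graded piece, in particular of $\mathcal{A}_\Phi^2 \cong \mathcal{G}(-2)$. But Lemma \ref{lemma: cone-wise linear functions to rays} applied on the simplicial open set $\partial\sigma$ identifies $\mathcal{G}(\partial\sigma) = \bigoplus_{\rho \in [\sigma](1)} \dual{\rho}$, while $\mathcal{G}([\sigma]) = \dual{\sigma}$ has dimension $d(\sigma) < \card[\sigma](1)$. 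The restriction map is injective (again by Lemma \ref{lemma: cone-wise linear functions to rays}) but cannot be surjective for dimensional reasons, contradicting flabbiness.

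The principal obstacle is the bookkeeping in the ``if'' direction: one needs the right inclusion-exclusion formula and a clean compatibility check. A conceptually cleaner alternative, which I would mention but not pursue, is to use the iso $\mathcal{G} \cong \bigoplus_\rho i_{\rho *}\dual{\rho}$ from Lemma \ref{lemma: cone-wise linear functions to rays}, observe that each summand is injective hence flabby, and argue that $\mathrm{Sym}$ of a direct sum of such sheaves remains flabby via a stalkwise tensor-product argument; the combinatorial route above is, however, the most direct.
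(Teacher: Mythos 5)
Your proof is correct. The paper offers no proof of this lemma (it is cited from [ICNP]), and your argument is essentially the standard one: reduce flabbiness to surjectivity of $\mathcal{A}_\Phi([\sigma])\to\mathcal{A}_\Phi(\partial\sigma)$ for each cone, solve the simplicial extension problem by the inclusion--exclusion formula (which checks out), and in the converse direction detect the failure already in the degree-two graded piece $\mathcal{G}$ at a non-simplicial cone of minimal dimension, where the count $d(\sigma)<\card[\sigma](1)$ rules out surjectivity. One cosmetic point: the lemma concerns arbitrary, not necessarily rational, simplicial fans, so ``primitive generators'' should read ``any choice of nonzero generators of the rays'', which still form a basis of $\Span(\sigma)$.
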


Suppose that $\pi\colon \Psi \to \Phi$ is a subdivision. Restriction of cone-wise polynomial functions gives rise to the morphism of sheaves of graded algebras $\pi^* \colon \mathcal{A}_\Phi \to \pi_*\mathcal{A}_\Psi$. In other words, subdivision is a morphism of ringed spaces $\pi\colon (\Psi,\mathcal{A}_\Psi) \to (\Phi,\mathcal{A}_\Phi)$.

\subsection{Quasi-convex fans}\label{subsection: quasi-conves fans}
According to \cite{BBFK}, a simplicial fan $\Phi$ in $V$ is \emph{quasi-convex} if $\mathcal{A}_\Phi(\Phi)$ is a free module over $A_V := \mathrm{S}(\dual{V}(-2))$. By Theorem 4.4 of loc. cit. a purely $n$-dimensional fan $\Phi$ is quasi-convex if and only if the support of its boundary fan is a real homology manifold.

It follows that
\begin{itemize}
\item any complete fan is quasi-convex;
\item if $\Phi$ is a complete simplicial fan and $\tau\in\Phi$, then $[\Star_\Phi(\tau)]$ is quasi-convex
\item and so is $\Phi\setminus\Star_\Phi(\tau)$ (since it has the same boundary as $[\Star_\Phi(\tau)]$).
\end{itemize}

\section{The Grothendieck group of sheaves on a fan}\label{section: the grothendieck group}
Suppose that $\Phi$ is a \emph{simplicial} fan in $V$; let $n :=\dim V$.

\subsection{$\mathbf{K(\Phi)}$}
We denote by $K(\Phi)$ the Grothendieck group of the category $\Sh_{fin}(\Phi)$ of sheaves of finite dimensional vector spaces on $\Phi$. For $F\in\Sh_{fin}(\Phi)$ we denote the corresponding element of $K(\Phi)$ by $[F]$.

Since the tensor product of sheaves of vector spaces is exact, it induces a binary operation $(\cdot)\bullet(\cdot)$ on $K(\Phi)$ such that $[F]\bullet[G] = [F\otimes G]$ for $F,G\in\Sh_{fin}(\Phi)$, which is associative and commutative with unit $1 := [\mathbb{R}_\Phi]$.

Let
\[
\chi\colon K(\Phi) \to \mathbb{Z}
\]
denote the map defined by
\[
[F] \mapsto \chi([F]) :=   \chi(\Phi; F) =\sum\limits_i (-1)^i\dim C^i(\Phi;F)
\]
for $F\in\Sh_{fin}(\Phi)$.

\begin{remark}
If $\Phi$ is complete, then $\chi(\Phi; F) =\sum\limits_i (-1)^i\dim H^i(\Phi;F)$.
\end{remark}
\begin{lemma}\label{lemma: line bundles vs injectives}
For $\sigma\in\Phi$
\begin{align*} 
[\mathbb{R}_{\Star(\sigma)}] & = \sum_{\tau\leqslant \sigma}(-1)^{d(\tau)}[\mathcal{O}(\tau)], \\
[\mathcal{O}(\sigma)] & = \sum_{\tau\leqslant \sigma}(-1)^{d(\tau)}[\mathbb{R}_{\Star(\tau)}].
\end{align*}
\end{lemma}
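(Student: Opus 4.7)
The plan is to derive both identities from the single primitive relation
\[
[\mathcal{O}(\rho)] \;=\; 1 \;-\; [\mathbb{R}_{\Star(\rho)}] \qquad (\rho\in\Phi(1))
\]
in $K(\Phi)$ together with the multiplicativity of $\mathcal{O}(\sigma)$, expanding a product over the rays of $\sigma$ using simpliciality.

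First, I would establish the basic relation for a ray $\rho$. The defining short exact sequence \eqref{line bundles associated to rays} reads
\[
0 \to \mathcal{O}(\rho) \to \dual{\rho}_\Phi \to i_{\rho *}\dual{\rho} \to 0,
\]
and, since $\dual{\rho}$ is one-dimensional, non-canonical choices of basis give isomorphisms $\dual{\rho}_\Phi\cong \mathbb{R}_\Phi$ and $i_{\rho *}\dual{\rho} = \dual{\rho}_{\Star(\rho)} \cong \mathbb{R}_{\Star(\rho)}$. Passing to $K(\Phi)$ yields $[\mathcal{O}(\rho)] = 1 - [\mathbb{R}_{\Star(\rho)}]$, equivalently $[\mathbb{R}_{\Star(\rho)}] = 1 - [\mathcal{O}(\rho)]$.

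Next, I would establish a dual multiplicativity for the sheaves $\mathbb{R}_{\Star(\tau)}$. For $\tau_1,\tau_2\leqslant\sigma$ the stalk of $\mathbb{R}_{\Star(\tau_1)}\otimes \mathbb{R}_{\Star(\tau_2)}$ at $\mu\in\Phi$ is non-zero precisely when $\mu\geqslant\tau_1$ and $\mu\geqslant\tau_2$. Because $\sigma$ is simplicial, its faces are in bijection with subsets of $[\sigma](1)$, and the subset $[\tau_1](1)\cup[\tau_2](1)$ corresponds to a face $\tau_1+\tau_2\leqslant\sigma$; the condition $\mu\geqslant\tau_1,\tau_2$ is then equivalent to $\mu\geqslant\tau_1+\tau_2$. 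Hence $\mathbb{R}_{\Star(\tau_1)}\otimes\mathbb{R}_{\Star(\tau_2)}\cong\mathbb{R}_{\Star(\tau_1+\tau_2)}$. By induction, for any subset $S\subseteq [\sigma](1)$ with corresponding face $\tau_S\leqslant\sigma$ (so $d(\tau_S)=|S|$), one has
\[
\bigotimes_{\rho\in S}\mathbb{R}_{\Star(\rho)} \;\cong\; \mathbb{R}_{\Star(\tau_S)}.
\]
Similarly, directly from the definition \eqref{line bundles associated to cones}, $[\mathcal{O}(\tau_S)] = \prod_{\rho\in S}[\mathcal{O}(\rho)]$ in $(K(\Phi),\bullet)$.

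With these in place, both identities fall out of the same binomial-style expansion. Using $[\mathcal{O}(\sigma)] = \prod_{\rho\in[\sigma](1)}[\mathcal{O}(\rho)] = \prod_{\rho\in[\sigma](1)}(1-[\mathbb{R}_{\Star(\rho)}])$, expanding and collecting terms indexed by subsets $S\subseteq[\sigma](1)$ gives
\[
[\mathcal{O}(\sigma)] \;=\; \sum_{S\subseteq[\sigma](1)} (-1)^{|S|}\,[\mathbb{R}_{\Star(\tau_S)}] \;=\; \sum_{\tau\leqslant\sigma}(-1)^{d(\tau)}\,[\mathbb{R}_{\Star(\tau)}],
\]
which is the second identity. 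For the first, expand $[\mathbb{R}_{\Star(\sigma)}] = \prod_{\rho\in[\sigma](1)}[\mathbb{R}_{\Star(\rho)}] = \prod_{\rho\in[\sigma](1)}(1-[\mathcal{O}(\rho)])$ the same way, obtaining $\sum_{\tau\leqslant\sigma}(-1)^{d(\tau)}[\mathcal{O}(\tau)]$. (Alternatively, Möbius inversion on the Boolean lattice of faces of $\sigma$ recovers one identity from the other.)

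The only point that requires care — and thus the main, though modest, obstacle — is the stalk computation showing $\mathbb{R}_{\Star(\tau_1)}\otimes\mathbb{R}_{\Star(\tau_2)}\cong\mathbb{R}_{\Star(\tau_1+\tau_2)}$ under simpliciality; once this is granted, the rest is pure formal expansion.
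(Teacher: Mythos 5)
Your proof is correct and follows essentially the same route as the paper: both reduce to the ray relation $[\mathcal{O}(\rho)]=1-[\mathbb{R}_{\Star(\rho)}]$ and then expand the product over $[\sigma](1)$ using the bijection between subsets of rays and faces of the simplicial cone $\sigma$. The only difference is that you spell out the stalk computation justifying $\prod_{\rho\in[\sigma](1)}[\mathbb{R}_{\Star(\rho)}]=[\mathbb{R}_{\Star(\sigma)}]$, which the paper asserts in one line ``since $\Phi$ is simplicial''; that added detail is correct and harmless.
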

\begin{proof}
For $\rho\in\Phi(1)$ the definition of $\mathcal{O}(\rho)$ implies that
\[
[\mathcal{O}(\rho)] = 1-[\mathbb{R}_{\Star(\rho)}],\ \ \ [\mathbb{R}_{\Star(\rho)}] = 1-[\mathcal{O}(\rho)] .
\]
Since $\Phi$ is simplicial, for $\sigma\in\Phi$
\[
[\mathbb{R}_{\Star(\sigma)}] = \prod\limits_{\rho\in[\sigma](1)}[\mathbb{R}_{\Star(\rho)}] = \prod\limits_{\rho\in[\sigma](1)} (1-[\mathcal{O}(\rho)]) = \sum_{\tau\leqslant \sigma}(-1)^{d(\tau)}[\mathcal{O}(\tau)]
\]
and
\[
[\mathcal{O}(\sigma)] = \prod\limits_{\rho\in[\sigma](1)}(1-[\mathbb{R}_{\Star(\rho)}]) = \sum_{\tau\leqslant \sigma}(-1)^{d(\tau)}[\mathbb{R}_{\Star(\tau)}].
\]
\end{proof}
For $\sigma\in\Phi$ let $\mathbb{R}_{\underline{\sigma}}$ denote the sheaf defined by
\[
(\mathbb{R}_{\underline{\sigma}})_\tau = \begin{cases} \mathbb{R} & \text{if $\sigma = \tau$,} \\
0 & \text{otherwise}.
\end{cases}
\]
\begin{prop}\label{prop: bases for K}
{~}
The group $K(\Phi)$ is free of finite rank equal to the cardinality of $\Phi$. Moreover:
\begin{enumerate}
\item The collection of classes $\{\mathbb{R}_{\underline{\sigma}}\}$, $\sigma\in\Phi$ is a basis for $K(\Phi)$.
\item The collection of classes $[\mathbb{R}_{\Star(\sigma)}]$, $\sigma\in\Phi$ is a basis for $K(\Phi)$.
\item The collection of classes $[\mathcal{O}]$, $[\mathcal{O}(\sigma)]$, $\oo\neq\sigma\in\Phi$ is a basis for $K(\Phi)$.
\end{enumerate}
\end{prop}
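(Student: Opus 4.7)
The plan is to prove (1) first and then obtain (2) and (3) by triangular changes of basis. For (1), the key observation is that for every $\sigma \in \Phi$ the stalk functor $F \mapsto F_\sigma$ on $\Sh_{fin}(\Phi)$ is exact, so the assignment $[F] \mapsto (\dim_\mathbb{R} F_\sigma)_{\sigma\in\Phi}$ descends to a well-defined group homomorphism
\[
\Xi \colon K(\Phi) \to \mathbb{Z}^\Phi
\]
which sends $[\mathbb{R}_{\underline{\sigma}}]$ to the $\sigma$-th standard basis vector. It therefore suffices to prove that the classes $[\mathbb{R}_{\underline{\sigma}}]$ generate $K(\Phi)$: this will force $\Xi$ to be an isomorphism and will yield (1) simultaneously.

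Spanning follows by induction on $|\Phi|$. Pick $\sigma \in \Phi$ maximal in the face order; then $\{\sigma\}$ is closed (its closure being $\Star(\sigma) = \{\sigma\}$) and $U := \Phi \setminus \{\sigma\}$ is a subfan, hence open. Writing $i \colon \{\sigma\} \hookrightarrow \Phi$ and $j \colon U \hookrightarrow \Phi$ for the inclusions, the canonical short exact sequence
\[
0 \to j_! j^* F \to F \to i_* F_\sigma \to 0
\]
identifies $i_* F_\sigma$ with $\dim(F_\sigma)$ copies of $\mathbb{R}_{\underline{\sigma}}$ and reduces the spanning claim for $F$ to that for $j^* F$ on $U$, via exactness of $j_!$. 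Iterating this gives $[F] = \sum_{\tau\in\Phi} \dim(F_\tau)\cdot [\mathbb{R}_{\underline{\tau}}]$, proving (1).

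For (2), specializing this formula to $F = \mathbb{R}_{\Star(\sigma)}$, whose stalks are $\mathbb{R}$ on $\Star(\sigma)$ and $0$ elsewhere, yields $[\mathbb{R}_{\Star(\sigma)}] = \sum_{\tau\geqslant\sigma}[\mathbb{R}_{\underline{\tau}}]$. Relative to any linear extension of the face order the transition matrix is unitriangular, so $\{[\mathbb{R}_{\Star(\sigma)}]\}_{\sigma\in\Phi}$ is again a basis. For (3), Lemma \ref{lemma: line bundles vs injectives} expresses $[\mathcal{O}(\sigma)]$ as a sum over $\tau \leqslant \sigma$ of signed $[\mathbb{R}_{\Star(\tau)}]$ with diagonal entry $(-1)^{d(\sigma)} \in \{\pm 1\}$, giving another invertible triangular change of basis over $\mathbb{Z}$; together with the convention $\mathcal{O} = \mathcal{O}(\oo)$, this identifies the family in (3) with $\{[\mathcal{O}(\sigma)] : \sigma \in \Phi\}$ and establishes it as a basis. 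The only substantive step is the maximal-cone extension-by-zero short exact sequence underpinning the spanning argument for (1); the rest is formal triangular linear algebra combined with a direct appeal to Lemma \ref{lemma: line bundles vs injectives}.
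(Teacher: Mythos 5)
Your proof is correct, but it takes a genuinely different route from the paper for parts (1) and (2). The paper proves (1) by invoking the equivalence between $\Sh_{fin}(\Phi)$ and finite-dimensional representations of the quiver on the dual poset of $\Phi$, then citing Kirillov's theorems that the simple representations (which correspond to the skyscrapers $\mathbb{R}_{\underline{\sigma}}$) form a basis of the Grothendieck group; for (2) it observes that the $\mathbb{R}_{\Star(\sigma)}$ are the indecomposable injectives, that the existence of enough injectives makes their classes generate $K(\Phi)$, and that a generating set of the right cardinality in a free group of that rank is a basis. You instead give a self-contained d\'evissage: the stalk-dimension homomorphism $\Xi\colon K(\Phi)\to\mathbb{Z}^\Phi$ supplies linear independence, and the open--closed sequence $0 \to j_! j^* F \to F \to i_* F_\sigma \to 0$ at a maximal cone (where $\{\sigma\}$ is indeed closed, since $\Star(\sigma)=\{\sigma\}$, and $i_*F_\sigma$ is $\dim F_\sigma$ copies of $\mathbb{R}_{\underline{\sigma}}$) supplies spanning by induction on $|\Phi|$, yielding the explicit formula $[F]=\sum_\tau \dim(F_\tau)[\mathbb{R}_{\underline{\tau}}]$. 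For (2) you then read off $[\mathbb{R}_{\Star(\sigma)}]=\sum_{\tau\geqslant\sigma}[\mathbb{R}_{\underline{\tau}}]$ and conclude by unitriangularity rather than by the injective-resolution argument; for (3) both proofs rest on Lemma \ref{lemma: line bundles vs injectives}, with your version making the $(-1)^{d(\sigma)}$ diagonal entries explicit. Your approach is more elementary and avoids the external citation, and it also sidesteps the (true but slightly glossed) step in the paper that classes of injectives generate $K(\Phi)$, which really uses that every object has a finite injective resolution; the paper's approach, in exchange, places the result in the standard quiver-representation framework. Both are valid.
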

\begin{proof}
Let $\mathcal{Q}(\Phi)$ denote the quiver given by dual poset of $\Phi$. This quiver naturally has no oriented cycles and its vertices correspond to cones of $\Phi$. In particular, they also correspond to the irreducible open sets of $\Phi$ and therefore a sheaf $F\in \Sh_{fin}(\Phi)$ gives rise to a representation of $\mathcal{Q}(\Phi)$. This correspondence is an equivalence of categories, and hence induces an isomorphism between the corresponding Grothendieck groups. Under this correspondence the sheaves $\{\mathbb{R}_{\underline{\sigma}}\}_{\sigma\in\Phi}$ correspond to a full list of simple representations of $\mathcal{Q}(\Phi)$ (Theorem 1.10 of \cite{K}) and they form a basis of $K(\mathcal{Q}(\Phi))$ (Theorem 1.15 loc. cit.). Hence, the sheaves $\{\mathbb{R}_{\underline{\sigma}}\}_{\sigma\in\Phi}$ form a basis of $K(\Phi)$ and the latter group is free of rank equal to the cardinality of $\Phi$.
The sheaves $[\mathbb{R}_{\Star(\sigma)}]$, $\sigma\in\Phi$ are the indecomposable injective objects in $\Sh_{fin}(\Phi)$. Since the category $\Sh_{fin}(\Phi)$ has enough injective objects, it follows that the collection $[\mathbb{R}_{\Star(\sigma)}]$, $\sigma\in\Phi$, generates $K(\Phi)$. As this collection has the size of a basis, it follows that it must be a basis. Since the collection $[\mathbb{R}_{\Star(\sigma)}]$, $\sigma\in\Phi$ is a basis, Lemma \ref{lemma: line bundles vs injectives} shows that the collection $[\mathcal{O}] = [\mathbb{R}_{\Star(\oo)}]$, $[\mathcal{O}(\sigma)]$, $\oo\neq\sigma\in\Phi$ is a basis as well.
\end{proof}

\subsection{The product formula}
For $S\subseteq\Phi(1)$ let
\[
\langle S\rangle := \{\sigma\in\Phi \mid [\sigma](1) \subseteq S \}.
\]
The subset $\langle S \rangle \subset \Phi$ is the largest subfan of $\Phi$ such that every $\sigma\in \langle S\rangle$ is generated by the rays in $S$.

\begin{prop}\label{prop: product formula}
For any $S\subset \Phi(1)$
\begin{equation}\label{product formula}
\prod_{\rho\in S}[\mathcal{O}(\rho)] = \sum_{\tau\in \langle S\rangle} (-1)^{\dim\overline{\Star}_{\langle S\rangle}(\tau)} \chi\left(\overline{\Star}_{\langle S\rangle}(\tau);\mathbb{R} \right)\cdot[\mathcal{O}(\tau)].
\end{equation}
\end{prop}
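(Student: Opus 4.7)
The plan is to reduce the identity \eqref{product formula} to a bookkeeping calculation in $K(\Phi)$ using the two expressions furnished by Lemma \ref{lemma: line bundles vs injectives}.

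First I would start from $[\mathcal{O}(\rho)] = 1 - [\mathbb{R}_{\Star(\rho)}]$ and expand
\[
\prod_{\rho \in S}[\mathcal{O}(\rho)] = \sum_{T \subseteq S}(-1)^{|T|}\prod_{\rho \in T}[\mathbb{R}_{\Star(\rho)}].
\]
Since each sheaf $\mathbb{R}_{\Star(\rho)}$ has stalk $\mathbb{R}$ on $\Star(\rho)$ and zero elsewhere, its tensor products correspond to intersections of stars. The combinatorial observation to isolate here --- and which uses the simplicial hypothesis essentially --- is that the assignment $T \mapsto \tau_T$, where $\tau_T$ denotes the cone spanned by the rays of $T$, gives a bijection between those $T \subseteq S$ for which $\bigcap_{\rho \in T}\Star(\rho)$ is nonempty and the cones $\tau \in \langle S\rangle$, and that in this case $\bigcap_{\rho \in T}\Star(\rho) = \Star(\tau_T)$. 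Together with $|T| = d(\tau_T)$, this collapses the expansion to
\[
\prod_{\rho \in S}[\mathcal{O}(\rho)] = \sum_{\tau \in \langle S\rangle}(-1)^{d(\tau)}[\mathbb{R}_{\Star(\tau)}].
\]

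Next I would substitute the companion identity $[\mathbb{R}_{\Star(\tau)}] = \sum_{\tau' \leqslant \tau}(-1)^{d(\tau')}[\mathcal{O}(\tau')]$ from Lemma \ref{lemma: line bundles vs injectives} and swap the order of summation. Because $\tau' \leqslant \tau \in \langle S\rangle$ forces $\tau' \in \langle S\rangle$, the coefficient of $[\mathcal{O}(\tau')]$ becomes
\[
(-1)^{d(\tau')}\sum_{\tau \in \Star_{\langle S\rangle}(\tau')}(-1)^{d(\tau)}.
\]
Reindexing along $\tau \mapsto \tau/\tau'$ and using $d(\tau) = d(\tau/\tau') + d(\tau')$, this equals $\sum_{\sigma \in \overline{\Star}_{\langle S\rangle}(\tau')}(-1)^{d(\sigma)}$. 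Applying the cellular definition $\chi(\Psi;\mathbb{R}) = \sum_i (-1)^i \dim C^i(\Psi;\mathbb{R})$ to $\Psi = \overline{\Star}_{\langle S\rangle}(\tau')$, viewed as a fan in the ambient space $V/\Span(\tau')$ of dimension $n - d(\tau')$, that alternating count equals $(-1)^{n-d(\tau')}\chi(\overline{\Star}_{\langle S\rangle}(\tau');\mathbb{R})$, which matches the right-hand side of \eqref{product formula}.

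The only real obstacle is verifying the combinatorial description of $\bigcap_{\rho \in T}\Star(\rho)$: one must show that if some cone of $\Phi$ has every $\rho \in T$ as a face, then the rays in $T$ themselves span a face of that cone (so that $\tau_T \in \Phi$), and conversely that when $\tau_T \notin \Phi$ the intersection is empty. Both facts are immediate from simpliciality. Everything else reduces to manipulating finite sums and tracking signs.
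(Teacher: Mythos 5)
Your argument is correct and is essentially the paper's own proof: expand $\prod_{\rho\in S}(1-[\mathbb{R}_{\Star(\rho)}])$ into $\sum_{\sigma\in\langle S\rangle}(-1)^{d(\sigma)}[\mathbb{R}_{\Star(\sigma)}]$ using simpliciality, substitute $[\mathbb{R}_{\Star(\sigma)}]=\sum_{\tau\leqslant\sigma}(-1)^{d(\tau)}[\mathcal{O}(\tau)]$ from Lemma \ref{lemma: line bundles vs injectives}, swap the order of summation, and identify the inner alternating sum over $\overline{\Star}_{\langle S\rangle}(\tau)$ with a signed Euler characteristic. The only cosmetic difference is that you normalize the cellular Euler characteristic of $\overline{\Star}_{\langle S\rangle}(\tau)$ by the ambient dimension $n-d(\tau)$ rather than by $\dim\overline{\Star}_{\langle S\rangle}(\tau)$; since $\chi$ itself flips sign under the same change of grading convention, the product $(-1)^{\bullet}\chi$ equals $\sum_{\sigma}(-1)^{d(\sigma)}$ either way, so this is a matter of convention rather than a gap.
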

\begin{proof}
By the definition of $\langle S \rangle$,
\begin{align*}
\prod_{\rho\in S}[\mathcal{O}(\rho)] & = \prod_{\rho\in S}(1-[\mathbb{R}_{\Star(\rho)}]) \\
& = \sum_{\sigma\in\langle S \rangle} (-1)^{d(\sigma)}[\mathbb{R}_{\Star(\sigma)}] \\
\text{by Lemma \ref{lemma: line bundles vs injectives}}\ \ \ & = \sum_{\sigma\in\langle S \rangle}\sum_{\tau\leqslant \sigma}(-1)^{d(\sigma) - d(\tau)}[\mathcal{O}(\tau)] \\
& = \sum_{\tau\in\langle S \rangle}\sum_{\sigma\in\langle S \rangle\cap\Star(\tau)}(-1)^{d(\sigma) - d(\tau)}[\mathcal{O}(\tau)] \\
& = \sum_{\tau\in\langle S \rangle}\sum_{\sigma/\tau\in\overline{\Star}_{\langle S \rangle}(\tau)}(-1)^{d(\sigma/\tau)}[\mathcal{O}(\tau)] \\
& = \sum_{\tau\in\langle S \rangle}(-1)^{\dim\overline{\Star}_{\langle S\rangle}(\tau)}\chi\left(\overline{\Star}_{\langle S\rangle}(\tau);\mathbb{R} \right)[\mathcal{O}(\tau)] .
\end{align*}
\end{proof}

\begin{example}\label{example: projective space}
Consider the example $\Phi = \mathbb{P}^n$ (see Example \ref{example: Pn}), $S = \mathbb{P}^n(1)$. Then, $\langle\mathbb{P}^n(1)\rangle = \mathbb{P}^n$. For $\tau\in\mathbb{P}^n(k)$ the fan $\overline{\Star}_{\mathbb{P}^n}(\tau)$ is isomorphic to $\mathbb{P}^{n-k}$ with $\chi\left(\mathbb{P}^{n-k};\mathbb{R} \right) = 1$.

The product formula \eqref{product formula} says
\[
\prod_{\rho\in \mathbb{P}^n(1)}[\mathcal{O}(\rho)] = \sum_{\tau\in \mathbb{P}^n} (-1)^{n-d(\tau)}[\mathcal{O}(\tau)].
\]
\end{example}

\section{(Intersection) cohomology of simplicial fans}\label{section: cohomology}
Throughout this section $\Phi$ is a \emph{complete} simplicial fan in $V$ of dimension $\dim\Phi = \dim V = n$. Let $A = A_V := \operatorname{S}(\dual{V}(-2))$ be the algebra of regular functions on $V$ graded so that linear functions have degree 2. Let $A^+\subset A$ denote the ideal of elements of positive degree.  For a graded $A$-module $M$ we denote by $\overline{M}$ the graded vector space $M/A^+M$.

\subsection{Intersection cohomology}
Recall the ``structure sheaf'' of graded algebras $\mathcal{A}_\Phi$ introduced in \ref{subsection: the structure sheaf}.

Since $\Phi$ is a complete simplicial fan, it follows that (see \cite{ICNP}, Theorem 4.7),
\begin{itemize}
\item $\mathcal{A}_\Phi$ is flabby, hence $H^i(\Phi;\mathcal{A}_\Phi) = 0$ for $i\neq 0$;
\item the $A$-module $H^0(\Phi;\mathcal{A}_\Phi)$ is free
\end{itemize}
The \emph{(intersection) cohomology of $\Phi$} is defined as the graded vector space
\[
H(\Phi) := \overline{H^0(\Phi;\mathcal{A}_\Phi)} .
\]

\begin{prop}\label{prop: coh of forms}
{\ }
\begin{enumerate}
\item $H^i(\Phi;\Omega^j_\Phi) = 0$ for $i\neq j$;
\item $H(\Phi) \cong \bigoplus\limits_{i} H^i(\Phi;\Omega^i_\Phi)(-2i)$
\end{enumerate}
\end{prop}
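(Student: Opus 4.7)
The plan is to construct a Koszul-type complex of sheaves on $\Phi$ whose cohomology sheaves are the $\Omega^q_\Phi$ and whose hypercohomology can be computed directly in terms of $H(\Phi)$. Specifically, consider the complex of graded sheaves
\[
K^\bullet : \quad K^{-q} := \bigwedge\nolimits^{q}\dual{V}(-2q) \otimes_{\mathbb{R}} \mathcal{A}_\Phi \qquad (0\leq q\leq n),
\]
equipped with the Koszul differential $d:K^{-q}\to K^{-q+1}$ coming from the natural $A$-module structure on $\mathcal{A}_\Phi$ induced by the pullback ring map $A\to H^0(\Phi;\mathcal{A}_\Phi)$. This differential is of internal degree zero. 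At a cone $\sigma\in\Phi$ the stalk $K^\bullet_\sigma$ is the classical Koszul complex computing $\operatorname{Tor}_*^A(\mathbb{R},S(\dual{\sigma}))$. Resolving $S(\dual{\sigma}) = A/(\sigma^\perp\cdot A)$ by its Koszul resolution on $\sigma^\perp\subset \dual{V}$, one finds $\operatorname{Tor}_q^A(\mathbb{R},S(\dual{\sigma})) = \bigwedge^{q}\sigma^\perp$ sitting in internal degree $2q$. Hence
\[
\mathcal{H}^{-q}(K^\bullet) \cong \Omega^q_\Phi(-2q).
\]

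I would next compute $\mathbb{H}^*(\Phi;K^\bullet)$ in two different ways. Since each $K^i$ is a finite direct sum of shifts of the flabby sheaf $\mathcal{A}_\Phi$, one has $H^j(\Phi;K^i)=0$ for $j>0$, so the first hypercohomology spectral sequence collapses and $\mathbb{H}^p(\Phi;K^\bullet) = H^p(\Gamma(\Phi;K^\bullet))$. But $\Gamma(\Phi;K^\bullet)$ is precisely the Koszul complex for the action of $A^+$ on the $A$-module $H^0(\Phi;\mathcal{A}_\Phi)$. By hypothesis the latter is a free, hence flat, $A$-module, so this Koszul complex is a resolution of the reduction $\overline{H^0(\Phi;\mathcal{A}_\Phi)} = H(\Phi)$. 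Therefore
\[
\mathbb{H}^0(\Phi;K^\bullet) = H(\Phi), \qquad \mathbb{H}^p(\Phi;K^\bullet)=0 \text{ for } p\neq 0.
\]

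On the other hand, the second hypercohomology spectral sequence reads
\[
{}''E_2^{p,-q} = H^p(\Phi;\Omega^q_\Phi)(-2q) \ \Longrightarrow \ \mathbb{H}^{p-q}(\Phi;K^\bullet).
\]
The crucial observation is that this spectral sequence degenerates at $E_2$ for internal-grading reasons: ${}''E_r^{p,-q}$ is concentrated in internal degree $2q$, while the target ${}''E_r^{p+r,-(q+r-1)}$ of $d_r$ sits in internal degree $2(q+r-1)$, and these disagree for $r\geq 2$, forcing $d_r=0$. The internal grading moreover splits the resulting filtration on the abutment canonically. Comparing with the computation above yields both $H^p(\Phi;\Omega^q_\Phi)=0$ for $p\neq q$, which is assertion~(1), and the decomposition $H(\Phi) \cong \bigoplus_q H^q(\Phi;\Omega^q_\Phi)(-2q)$, which is assertion~(2).

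The most delicate step is the stalk-level identification of $\mathcal{H}^{-q}(K^\bullet)$: one must track the grading shifts carefully to recover $\Omega^q_\Phi(-2q)$ with the correct twist, and verify that the local Koszul differentials assemble into a well-defined complex of sheaves on the poset $\Phi$ (which follows from the fact that the restriction maps of $\mathcal{A}_\Phi$ are $A$-linear ring homomorphisms). Once these are in place, the degeneration and the comparison of abutments are formal.
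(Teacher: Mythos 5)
Your argument is correct, and it is at heart the same computation as the paper's: both evaluate $\mathcal{A}_\Phi\otimes^{\mathbb{L}}_{A_\Phi}\mathbb{R}_\Phi$ in two ways, using flabbiness of $\mathcal{A}_\Phi$ and freeness of $H^0(\Phi;\mathcal{A}_\Phi)$ over $A$ on one side and a stalk-wise Koszul computation on the other. The difference is which factor you resolve. You resolve $\mathbb{R}$ by the standard Koszul resolution $\bigwedge^\bullet\dual{V}(-2\bullet)\otimes A_\Phi$ and tensor with $\mathcal{A}_\Phi$, which forces you to identify the cohomology \emph{sheaves} of the resulting complex (a Tor computation at each stalk, using that $\mathcal{A}_{\Phi,\sigma}\cong\mathrm{S}(\dual{\sigma})=A/(\sigma^\perp\cdot A)$ with $\sigma^\perp$ spanned by a regular sequence) and then to run the second hypercohomology spectral sequence, killing the differentials $d_r$, $r\geqslant 2$, by the internal-degree bookkeeping you describe; that bookkeeping is sound and also splits the filtration on the abutment, so the proof goes through. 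The paper instead resolves $\mathcal{A}_\Phi$ by the flat $A_\Phi$-modules $A_\Phi\otimes\Omega^{-j}_\Phi(-2j)$; after applying $(\cdot)\otimes_{A_\Phi}\mathbb{R}_\Phi$ the Koszul differential becomes multiplication by elements of $A^+$ and hence vanishes, so the derived tensor product is literally the direct sum $\bigoplus_j\Omega^j_\Phi(-2j)[j]$ with zero differential and no spectral sequence is needed --- the price being that one must check that $A_\Phi\otimes\Omega^{-j}_\Phi(-2j)\to\mathcal{A}_\Phi$ is a quasi-isomorphism, which is the same stalk-wise Koszul exactness you invoke. In short: your route trades the paper's explicit splitting of the reduced complex for a degenerating spectral sequence; both are valid, and the two ``delicate steps'' you flag (functoriality of the stalk identification in $\sigma$, and the grading shifts) are exactly the points that need, and admit, routine verification.
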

\begin{proof}
Let
\[
K^j := A_\Phi\otimes\Omega_\Phi^{-j}(-2j).
\]
Let $\partial\colon K^j \to K^{j+1}$ denote the map given by
\[
f\otimes\xi_1\wedge\cdots\xi_j\mapsto\sum_{k=1}^j
(-1)^{k-1}f\xi_k\otimes\xi_1\cdots\widehat\xi_k\cdots\xi_j .
\]
Then, $\partial\circ\partial = 0$, i.e. $(K^\bullet,\partial)$ is a complex. The surjection $A_\Phi \to \mathcal{A}_\Phi$ extends to the map of complexes $K^\bullet \to \mathcal{A}_\Phi$ which is a quasi-isomorphism. Since the complex $K^\bullet$ is a resolution of $\mathcal{A}_\Phi$ by flat $A_\Phi$-modules, it follows that
\[
\mathcal{A}_\Phi\otimes^\mathbb{L}_{A_\Phi}\mathbb{R}_\Phi\cong K^\bullet\otimes_{A_\Phi}\mathbb{R}_\Phi =\bigoplus_j\Omega^j_\Phi(-2j)[j] \ .
\]

Since $\mathcal{A}_\Phi$ is flabby and $H^0(\Phi;\mathcal{A}_\Phi)$ is free over $A_V$ it follows that
\[
\mathbf{R}\Gamma(\Phi;\mathcal{A}_\Phi\otimes^\mathbb{L}_{A_\Phi}\mathbb{R}_\Phi)\cong
\mathbf{R}\Gamma(\Phi;\mathcal{A}_\Phi)\otimes^\mathbb{L}_A\mathbb{R}\cong
H^0(\Phi;\mathcal{A}_\Phi)\otimes_A\mathbb{R} = H(\Phi) \ .
\]
Therefore, $H^i(\Phi;\bigoplus\limits_j\Omega^j_\Phi(-2j)[j]) = 0$ for $i\neq 0$, i.e. $H^i(\Phi;\Omega^j_\Phi) = 0$ for $i\neq j$, and $H(\Phi) \cong \bigoplus\limits_{i} H^i(\Phi;\Omega^i_\Phi)(-2i)$.
\end{proof}

\subsection{Brion's functional}\label{subsection: brion's functional}
We recall the construction of the trace map on the cohomology of a complete fan due to M.~Brion (\cite{B}).
In what follows we fix a volume form $\Omega_V$ on $V$.

\begin{thm}[M.~Brion, \cite{B}, 2.2]
Suppose that $\Phi$ is a complete simplicial fan in $V$; let $n:= \dim V$. Then, $\dim_\mathbb{R} \Hom_{A_V}(H^0(\Phi;\mathcal{A}_\Phi), A_V(-2n)) = 1$.
\end{thm}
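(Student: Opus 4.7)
The plan is to reduce the statement to the computation of $\dim_\mathbb{R} H^{2n}(\Phi)$ and then to carry that computation out directly via the cellular complex. Throughout I interpret $\Hom_{A_V}(\cdot,\cdot)$ as degree-zero morphisms of graded $A_V$-modules (the only interpretation making the right-hand side finite-dimensional).

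First I would exploit the fact, recalled just above the theorem, that $M := H^0(\Phi;\mathcal{A}_\Phi)$ is a free graded $A$-module, so that $M \cong \bigoplus_i A(-2k_i)$ for some multiset of non-negative integers $\{k_i\}$. Reducing modulo $A^+$ identifies this multiset with the graded Hilbert series of $H(\Phi) = \overline{M}$; in particular $\dim_\mathbb{R} H^{2k}(\Phi) = \#\{i : k_i = k\}$. The elementary identification $\Hom_A(A(-2k), A(-2n))_0 = A_{2(k-n)}$ then gives
\[
\dim_\mathbb{R} \Hom_A(M, A(-2n)) \;=\; \sum_{i : k_i \geq n} \dim_\mathbb{R} A_{2(k_i-n)}.
\]

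Next I would apply Proposition \ref{prop: coh of forms} to replace $H^{2k}(\Phi)$ by $H^k(\Phi;\Omega^k_\Phi)$. Because $\Omega^1_\Phi$ is a subsheaf of $\dual{V}_\Phi$ and $\dim V = n$, the exterior power $\Omega^k_\Phi = \bigwedge^k \Omega^1_\Phi$ is the zero sheaf whenever $k > n$. Hence no $k_i$ exceeds $n$, and the sum above collapses to $\dim_\mathbb{R} H^{2n}(\Phi) = \dim_\mathbb{R} H^n(\Phi;\Omega^n_\Phi)$.

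Finally I would compute $H^n(\Phi;\Omega^n_\Phi)$ from the cellular complex $C^\bullet(\Phi;\Omega^n_\Phi)$. The stalk $\Omega^n_{\Phi,\sigma} = \bigwedge^n \sigma^\perp$ is nonzero iff $\sigma^\perp = \dual{V}$, which happens iff $\sigma = \oo$. Therefore $C^\bullet(\Phi;\Omega^n_\Phi)$ is concentrated in the single position $C^n(\Phi;\Omega^n_\Phi) = \bigwedge^n \dual{V} \cong \mathbb{R}$, forcing $H^n(\Phi;\Omega^n_\Phi) \cong \mathbb{R}$ as required. I expect no substantive obstacle here: once Proposition \ref{prop: coh of forms} and the freeness of $H^0(\Phi;\mathcal{A}_\Phi)$ are available, the argument is essentially graded-module bookkeeping combined with one small stalk computation.
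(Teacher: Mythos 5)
Your argument is correct and complete; the paper itself does not prove this statement but simply cites Brion \cite{B}, so your derivation is genuinely a different (and self-contained) route. Each step checks out: the freeness of $M=H^0(\Phi;\mathcal{A}_\Phi)$ is exactly what is recalled from \cite{ICNP} just before the theorem; the identification of the shifts $k_i$ with the Hilbert function of $\overline{M}=H(\Phi)$ is standard for free graded modules over a connected graded algebra; the degree-zero Hom computation $\Hom_A(A(-2k),A(-2n))_0\cong A_{2(k-n)}$ is right (and your reading of $\Hom_{A_V}$ as degree-preserving maps is the one the paper uses, since $\zeta_\Phi$ sends the degree-$2n$ element $F_\sigma$ to $1$); the vanishing $\Omega^k_\Phi=0$ for $k>n$ together with Proposition \ref{prop: coh of forms} kills all $k_i>n$; and the cellular complex of $\Omega^n_\Phi$ is indeed concentrated at the origin, giving $H^n(\Phi;\Omega^n_\Phi)\cong\textstyle\bigwedge^n\dual{V}\cong\mathbb{R}$. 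The only caveat worth recording is that your proof establishes the dimension count but does not exhibit a generator, whereas Brion's localization argument (reproduced in the paper immediately after the statement via the explicit functional $\zeta_\Phi$ with $\zeta_\Phi(F_\sigma)=1$) produces the distinguished nonzero element that the rest of the paper actually needs to normalize $\int_\Phi$; so in the paper's development your argument would still have to be supplemented by the construction of $\zeta_\Phi$.
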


Let $\rho\in\Phi(1)$. A \emph{Courant function} of $\rho$ is a cone-wise linear function $\phi_\rho\in H^0(\Phi;\mathcal{A}_\Phi)$ which is supported on $\Star(\rho)$ and positive on the relative interior of $\rho$.

Note that, by definition, any two Courant functions of $\rho$ are positive multiples of each other. If $\Phi$ is unimodular, then \emph{the Courant function of} $\rho$ is the unique Courant function $\phi_\rho$ of $\rho$ that satisfies $\phi_\rho(v_\rho)=1$. 

Let $\tau\in\Phi$. A \emph{Courant function} $\phi_\tau$ of $\tau$ is a cone-wise polynomial function given by
\begin{equation}\label{courant function}
\phi_\tau = \prod_{\rho\in[\tau](1)} \phi_\rho ,
\end{equation}
where $\phi_\rho$ is a Courant function of $\rho\in [\tau](1)$. If $\Phi$ is unimodular, \emph{the Courant function} $\phi_\tau$ of $\tau$ is the function defined by \eqref{courant function}, where $\phi_\rho$ is the Courant function of $\rho\in[\tau](1)$.

For a top-dimensional cone $\sigma\in\Phi(n)$ let
\[
F_\sigma := \left(|\Omega_V(v_{\rho_1},\ldots, v_{\rho_n})|^{-1}\cdot \phi_\sigma\right)_\sigma \in A ,
\]
where $[\sigma](1) = \{\rho_1,\ldots,\rho_n\}$ and $v_{\rho_i}\in\rho_i$ satisfies $\phi_{\rho_i}(v_{\rho_i}) = 1$.

For $f\in H^0(\Phi;\mathcal{A}_\Phi)$, $f\colon \Phi(n)\ni\sigma\mapsto f_\sigma\in A$, let
\[
\zeta_\Phi(f) := \sum_{\sigma\in\Phi(n)} \dfrac{f_\sigma}{F_\sigma} .
\]
Then, $\zeta_\Phi(f) \in A_V$ and the assignment $f\mapsto \zeta_\Phi(f)$ defines a map
\begin{equation}\label{brion's functional}
\zeta_\Phi \colon H^0(\Phi;\mathcal{A}_\Phi) \to A_V(-2n)
\end{equation}
which satisfies $\zeta_\Phi(F_\sigma) = 1$ and, in particular, is a non-zero element of $\Hom_{A_V}(H^0(\Phi;\mathcal{A}_\Phi), A_V(-2n))$.

The map \eqref{brion's functional} induces the map of graded vector spaces
\begin{equation}\label{integral}
\int_\Phi \colon H(\Phi) \to \mathbb{R}(-2n)
\end{equation}
and, hence, an isomorphism $\int_\Phi \colon H^{2n}(\Phi) \to \mathbb{R}$.

\subsection{The Poincar\'e pairing}
The Poincar\'e pairing in cohomology of complete simplicial fans is the subject of the following theorem of \cite{B} 2.4.

\begin{thm}
The $A_V$-bilinear pairing
\[
H^0(\Phi;\mathcal{A}_\Phi)\times H^0(\Phi;\mathcal{A}_\Phi) \to A_V(-2n) \colon (f,\ g) \mapsto \zeta_\Phi(f\cdot g)
\]
induces an isomorphism $H^0(\Phi;\mathcal{A}_\Phi) \to \Hom_{A_V}(H^0(\Phi;\mathcal{A}_\Phi), A_V(-2n))$.
\end{thm}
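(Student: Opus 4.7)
The plan is to reduce the statement, via the graded Nakayama lemma, to the assertion that the induced pairing on $H(\Phi)$ is perfect, and then to establish that non-degeneracy using the previously-constructed trace $\zeta_\Phi$.

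Set $M := H^0(\Phi;\mathcal{A}_\Phi)$ and $M^\star := \Hom_{A_V}(M, A_V(-2n))$, and write $\phi : M \to M^\star$ for the map $f \mapsto \zeta_\Phi(f \cdot {-})$. By the quoted Theorem~4.7 of \cite{ICNP}, $M$ is a finitely generated graded free $A_V$-module; hence so is $M^\star$, with the same $A_V$-rank. The map $\phi$ is graded $A_V$-linear of degree zero, so the graded Nakayama lemma implies that $\phi$ is an isomorphism if and only if its reduction $\overline\phi : \overline M \to \overline{M^\star}$ modulo $A_V^+$ is. Using the definition $\overline M = H(\Phi)$ and the canonical identification $\overline{M^\star} \cong \Hom_\mathbb{R}(H(\Phi), \mathbb{R}(-2n))$ available for finitely generated graded free $A_V$-modules, the statement reduces to the claim that the pairing
\[
H(\Phi) \otimes_\mathbb{R} H(\Phi) \to \mathbb{R}(-2n), \qquad (\overline f, \overline g) \mapsto \int_\Phi \overline f \cdot \overline g
\]
is perfect.

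Since $\overline M$ and $\overline{M^\star}$ have the same finite total $\mathbb{R}$-dimension, it suffices to verify non-degeneracy on one side of this reduced pairing. Given a nonzero class $\overline f \in H(\Phi)$, I would lift it to $f \in M \smallsetminus A_V^+ M$ and produce $g \in M$ with $\zeta_\Phi(fg) \notin A_V^+$, i.e., a nonzero class in the one-dimensional top component $H^{2n}(\Phi)$. The preceding theorem of Brion already identifies $\zeta_\Phi$ as a spanning element of the space of $A_V$-linear functionals of the appropriate degree, so the real task is to show that the multiplicative structure on $M$ is rich enough for every nonzero class in $H(\Phi)$ to admit such a dual companion.

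The main obstacle is precisely the construction of this dual $g$: for a general simplicial fan the multiplication in $M$ is not combinatorially transparent. One strategy is to induct along a sequence of star subdivisions connecting $\Phi$ to a manageable base case such as $\mathbb P^n$ (Theorem~\ref{thm: subdivision theorem}), using a Mayer--Vietoris argument with respect to the decomposition $\Phi = [\Star_\Phi(\tau)] \cup (\Phi\smallsetminus \Star_\Phi(\tau))$, whose pieces are both quasi-convex by~\ref{subsection: quasi-conves fans}, to propagate non-degeneracy across each subdivision. Alternatively one may appeal to the Gorenstein property of the face ring, which is the substantive content of \cite{B}~2.4. Either route reduces the theorem to the finite-dimensional duality and, via the Nakayama step above, establishes the isomorphism asserted in the statement.
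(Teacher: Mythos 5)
The paper does not prove this statement at all: it is imported verbatim as Theorem 2.4 of Brion \cite{B}, so there is no internal proof to compare against. Your reduction step is correct and standard: $H^0(\Phi;\mathcal{A}_\Phi)$ is a finitely generated free graded $A_V$-module, the pairing map is graded $A_V$-linear of degree zero, and the graded Nakayama lemma together with equality of graded ranks does reduce the assertion to perfection of the induced pairing on the finite-dimensional quotient $H(\Phi)$.

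However, that reduced statement is Poincar\'e duality for the cohomology of a complete simplicial fan, which is the entire substance of the theorem, and you do not prove it. You explicitly flag the construction of a dual class $g$ with $\int_\Phi \overline f\cdot\overline g\neq 0$ as ``the main obstacle'' and then offer only two unexecuted strategies. The second one --- appealing to ``the Gorenstein property of the face ring, which is the substantive content of \cite{B} 2.4'' --- is circular, since \cite{B} 2.4 is exactly the theorem under discussion. The first one --- induction along star subdivisions from $\mathbb{P}^n$ --- is a plausible route, but as sketched it is only a plan: you would need to show that the pairing restricted to the complement $\ker(\pi_*)$ in the orthogonal decomposition $H(\Psi)=\im(\pi^*)\oplus\ker(\pi_*)$ is itself non-degenerate (this is controlled by the cohomology of the star being subdivided and requires its own argument, essentially the computation carried out in Proposition \ref{prop: subdivision signature} of the paper, which in turn uses Proposition \ref{exceptionaldivisor}), and you would need to check that none of the Gysin and decomposition machinery you invoke already presupposes non-degeneracy (in this paper the proof of the projection formula in Proposition \ref{prop: gysinstars} does). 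As written, the proposal is a correct formal reduction followed by an acknowledged gap at the crux, so it does not constitute a proof of the statement.
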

\begin{cor}
The Poincar\'e pairing
\[
H^{2i}(\Phi)\otimes H^{2(n-i)}(\Phi) \xrightarrow{\cup} H^{2n}(\Phi) \xrightarrow{\int_\Phi} \mathbb{R}
\]
is nondegenerate.
\end{cor}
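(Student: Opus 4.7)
The plan is to descend the perfect $A_V$-bilinear pairing of the preceding theorem to the quotient $\overline{M}$, where $M := H^0(\Phi;\mathcal{A}_\Phi)$. Since $\Phi$ is complete (hence, by the discussion in \ref{subsection: quasi-conves fans}, quasi-convex), $M$ is a free graded $A_V$-module of finite rank, and I may choose a homogeneous $A_V$-basis $e_1,\ldots,e_r$ of degrees $d_1,\ldots,d_r$. The preceding theorem supplies a degree-preserving $A_V$-module isomorphism $\varphi\colon M \xrightarrow{\sim} \Hom_{A_V}(M,A_V(-2n))$; in particular the target is itself free of the same rank with dual basis $e_1^*,\ldots,e_r^*$ of degrees $2n-d_i$.

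Next, I would tensor $\varphi$ with $\mathbb{R} = A_V/A_V^+$ over $A_V$. On the source this yields $\overline{M} = H(\Phi)$. On the target, because $M$ is $A_V$-free of finite rank, the natural degree-preserving map
\[
\overline{\Hom_{A_V}(M,A_V(-2n))} \longrightarrow \Hom_\mathbb{R}\bigl(\overline{M},\mathbb{R}(-2n)\bigr), \qquad [\psi] \mapsto \bigl(\bar m \mapsto \overline{\psi(m)}\bigr),
\]
is well defined (elements of $A_V^+$ are absorbed on either side) and an isomorphism, since it carries the classes $\bar e_i^*$ to the $\mathbb{R}$-dual basis of $\bar e_1,\ldots,\bar e_r$. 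Composing, I obtain a degree-preserving isomorphism
\[
\overline{M} \xrightarrow{\sim} \Hom_\mathbb{R}\bigl(\overline{M},\mathbb{R}(-2n)\bigr)
\]
induced by the $\mathbb{R}$-bilinear pairing $(\bar f,\bar g) \mapsto \overline{\zeta_\Phi(fg)}$. Restricting to the degree-$2i$ component gives $H^{2i}(\Phi) \xrightarrow{\sim} \Hom_\mathbb{R}(H^{2(n-i)}(\Phi),\mathbb{R})$, which is the nondegeneracy statement.

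To conclude, I would observe that the ring structure on $H(\Phi)$ is the one inherited from the graded ring $\mathcal{A}_\Phi(\Phi)$, and that $\int_\Phi$ is by construction \eqref{integral} the reduction modulo $A_V^+$ of $\zeta_\Phi$, so that $(\bar f,\bar g)\mapsto \overline{\zeta_\Phi(fg)}$ is indeed $(\alpha,\beta)\mapsto \int_\Phi \alpha\cup\beta$. The only real content of the argument is the standard algebraic principle---a perfect graded pairing between free graded $A_V$-modules of finite rank, taking values in $A_V(-2n)$, descends to a perfect pairing between the quotients modulo $A_V^+$, with values in $\mathbb{R}(-2n)$---so I do not expect a serious obstacle; the mild subtlety is simply keeping track of graded degrees when identifying $\overline{\Hom_{A_V}(M,A_V(-2n))}$ with $\Hom_\mathbb{R}(\overline{M},\mathbb{R}(-2n))$.
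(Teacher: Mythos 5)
Your argument is correct and is precisely the descent-modulo-$A_V^+$ argument that the paper leaves implicit when it states this as an immediate corollary of Brion's theorem: freeness of $H^0(\Phi;\mathcal{A}_\Phi)$ over $A_V$ lets the perfect graded $A_V$-pairing reduce to a perfect pairing on $H(\Phi)$. Nothing is missing; the bookkeeping of degrees and the identification of $\int_\Phi$ with the reduction of $\zeta_\Phi$ are both as the paper sets them up.
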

 
\subsection{Poincar\'e pairings for stars.}(See 3.6 of \cite{HLIJ}) 
Let $\tau\in\Phi$ be a $k$-dimensional cone. 

Since $\Phi$ is simplicial, it follows that $\Phi$ has a local product structure at $\tau$. Let $p:V\to V/\Span(\tau)$ denote the projection. Note that the image under $p$ of the fan
\[\{\sigma\in \bdstar(\tau): \sigma\cap\tau =\oo\}\]
is a fan in $V/\Span(\tau)$ that coincides with $\overline{\Star_{\Phi}}(\tau)$.

It is shown in Lemma 3.20 of \cite{HLIJ} that the map of algebras
\[
p^* \colon \mathcal{A}_{\overline{\Star_{\Phi}}(\tau)}(\overline{\Star_{\Phi}}(\tau))\to \mathcal{A}_{\Phi}([\Star_{\Phi}(\tau)]).
\]
induced by the projection $p$ gives rise to an isomorphism
\begin{equation}\label{mapanatural}
p^* \colon H(\overline{\Star_{\Phi}}(\tau)) \to \mathcal{A}_{\Phi}([\Star_{\Phi}(\tau)])/A^+\mathcal{A}_{\Phi}([\Star_{\Phi}(\tau)])=: H([\Star_{\Phi}(\tau)]).
\end{equation}
The restriction map $\mathcal{A}_{\Phi}(\Phi)\to \mathcal{A}_{\Phi}([\Star_{\Phi}(\tau)])$ is surjective and therefore induces a surjective map
\begin{equation}\label{restriction map}
H(\Phi) \to H([\Star_{\Phi}(\tau)]) .
\end{equation}
Let
\[
i^* \colon H(\Phi)\to H(\overline{\Star}_\Phi(\tau))
\]
denote the composition of \eqref{restriction map} with the inverse of the isomorphism \eqref{mapanatural}.

Note that $\Star_\Phi(\tau) = [\Star_\Phi(\tau)]-\bdstar_{\Phi}(\tau)$. The restriction maps
\[
r \colon \mathcal{A}_{\Phi}(\Phi)\to \mathcal{A}_{\Phi}([\Star_{\Phi}(\tau)])\text{\ \ and\ \ } r^\prime \colon \mathcal{A}_{\Phi}(\Phi)\to \mathcal{A}_{\Phi}(\Phi\setminus\Star(\tau))
\]
induce an isomorphism
\[
\Gamma_{\Star_{\Phi}(\tau)}\mathcal{A}_\Phi :=  \ker(r^\prime) \xrightarrow{\cong} \ker\left( \mathcal{A}_{\Phi}([\Star_\Phi(\tau)]) \to \mathcal{A}_{\Phi}(\bdstar_\Phi(\tau)) \right)
\]
whose inverse is given by extension by zero. Let
\[
H([\Star_{\Phi}(\tau)],\bdstar_\Phi(\tau)):= \Gamma_{\Star_{\Phi}(\tau)}\mathcal{A}_\Phi/A^+\Gamma_{\Star_{\Phi}(\tau)}\mathcal{A}_\Phi.
\]

The fan $[\Star_\Phi(\tau)]$ is quasi-convex (cf. \ref{subsection: quasi-conves fans}). 
By  Proposition 5.2 of \cite{HLIJ} the choice of the volume form $\Omega_V$ on $V$ determines non-degenerate pairings
\begin{equation}\label{dualitypairing}
[\cdot,\cdot]_{[\Star_\Phi(\tau)]} \colon\mathcal{A}_{\Phi}([\Star_\Phi(\tau)])\times\Gamma_{\Star_{\Phi}(\tau)}\mathcal{A}_\Phi \to A(2n)
\end{equation}
and
\[
(\cdot, \cdot)_{[\Star_\Phi(\tau)]} \colon H([\Star_\Phi(\tau)]) \times H([\Star_{\Phi}(\tau)],\bdstar_\Phi(\tau))\to \mathbb{R}(2n) .
\]

According to Proposition 7.1 of \cite{HLIJ}, for $a\in \mathcal{A}_\Phi(\Phi)$ and $b \in \Gamma_{\Star_\Phi(\tau)}\mathcal{A}_\Phi$
\[
[r(a),b]_{[\Star_\Phi(\tau)]} = [a,b]_\Phi .
\]

Let $\rho_1,\ldots,\rho_k$ be the rays of (the $k$-dimensional cone) $\tau$. Let $f_i\in\dual{V}$ be a linear functional positive on the relative interior of $\rho_i$. Let $\phi_i$ be a Courant function of $\rho_i$ such that $\phi_i\vert_{\rho_i} = f_i\vert_{\rho_i}$. Let $\Omega^\prime$ be a volume form on $V/\Span(\tau)$ such that $\Omega_V = f_1\wedge\cdots\wedge f_k\wedge\Omega^\prime$.

Observe that $\phi_\tau := \phi_1\cdot\ldots\cdot\phi_k$ is a Courant function for $\tau$ and that multiplication by $\phi_\tau$ induces maps
\[
\phi_\tau\cdot \colon \mathcal{A}_\Phi(\Phi) \to\Gamma_{\Star_\Phi(\tau)}\mathcal{A}_\Phi \text{\ \ and\ \ } \phi_\tau\cdot \colon \mathcal{A}_\Phi([\Star_\Phi(\tau)]) \to\Gamma_{\Star_\Phi(\tau)}\mathcal{A}_\Phi
\]
which commute with the restriction map $r:\mathcal{A}_\Phi(\Phi)\to \mathcal{A}_\Phi([\Star_\Phi(\tau)])$. It follows from Section 7.3 and Proposition 7.8 of \cite{HLIJ} that, for $a,b\in H(\Phi)$,
\begin{equation}\label{profor}
\int_{\overline{\Star}_\Phi(\tau)}i^*a\cdot i^*b = n\cdot (r(a),\phi_\tau\cdot r(b))_{[\Star_\Phi(\tau)]} = \int_{\Phi}a\cdot(\phi_\tau\cdot b) .
\end{equation}
According to Corollary 7.9 of \cite{HLIJ}, multiplication by $\phi_\tau$ induces an isomorphism
\[
\phi_\tau \cdot: H([\Star_\Phi(\tau)]) \to H([\Star_\Phi(\tau)],\bdstar_\Phi(\tau))(2k) .
\]
Since $[\Star_\Phi(\tau)]$ is quasi-convex, the natural inclusion $\Gamma_{\Star_\Phi(\tau)}\mathcal{A}_\Phi \to \mathcal{A}_\Phi(\Phi)$ gives rise to an injective map
\begin{equation}\label{inclusionnatural}
H([\Star_\Phi(\tau)],\bdstar_\Phi(\tau)) \to H(\Phi) .
\end{equation}

\subsection{The Gysin map for stars.}
Suppose that $\tau$ is a $k$-dimensional cone of $\Phi$.

The \emph{Gysin map}
\[
i_*:H(\overline{\Star}_\Phi(\tau))\to H(\Phi)(2k)
\]
is defined as the composition
\[
H(\overline{\Star}_\Phi(\tau))\xrightarrow{\eqref{mapanatural}} H([\Star_\Phi(\tau)]) \xrightarrow{\phi_\tau\cdot} H([\Star_\Phi(\tau)],\bdstar_\Phi(\tau))(2k) \xrightarrow{\eqref{inclusionnatural}} H(\Phi)(2k) .
\]

\begin{prop}
{~}
\begin{enumerate}\label{prop: gysinstars}
\item The composition $i_*\circ i^*$ coincides with multiplication by $\phi_\tau$.

\item The Gysin map satisfies the projection formula. Namely, for $a\in H(\Phi)$ and $b\in H(\overline{\Star}_\Phi(\tau))$
\[
i_*(i^*a\cdot b) = a\cdot i_*b .
\]

\item For $a\in H(\overline{\Star}_\Phi(\tau))$
\[
\int_{\overline{\Star}_\Phi(\tau)}a = \int_\Phi i_*a.
\]
\end{enumerate}
\end{prop}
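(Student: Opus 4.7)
The plan is to unwind the definition of $i_*$ in each case, relying on two observations already recorded in the preceding discussion: first, multiplication by $\phi_\tau$ is compatible with the restriction $r\colon \mathcal{A}_\Phi(\Phi)\to\mathcal{A}_\Phi([\Star_\Phi(\tau)])$; second, under the extension-by-zero identification $\Gamma_{\Star_\Phi(\tau)}\mathcal{A}_\Phi\hookrightarrow \mathcal{A}_\Phi(\Phi)$, the inclusion \eqref{inclusionnatural} is $\mathcal{A}_\Phi(\Phi)$-linear because $\Gamma_{\Star_\Phi(\tau)}\mathcal{A}_\Phi$ is an ideal in $\mathcal{A}_\Phi(\Phi)$. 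With these in hand, (1) and (2) reduce to formal manipulation, while (3) follows from the quoted equation \eqref{profor}.

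For (1), given $a\in H(\Phi)$ the construction of $i^*$ gives $p^*(i^*a)=r(a)$, so $i_*i^*a$ is by definition the image in $H(\Phi)(2k)$, under \eqref{inclusionnatural}, of $\phi_\tau\cdot r(a)\in H([\Star_\Phi(\tau)],\bdstar_\Phi(\tau))(2k)$. Since $\phi_\tau$ vanishes on every cone not in $\Star_\Phi(\tau)$, the product $\phi_\tau\cdot a$ already lies in $\Gamma_{\Star_\Phi(\tau)}\mathcal{A}_\Phi$ and restricts under $r$ to $\phi_\tau\cdot r(a)$; hence the extension-by-zero lift is $\phi_\tau\cdot a$ itself, giving $i_*i^*a=\phi_\tau\cdot a$. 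The projection formula (2) is proved the same way: $p^*(i^*a\cdot b)=r(a)\cdot p^*b$, multiplication by $\phi_\tau$ yields $r(a)\cdot(\phi_\tau\cdot p^*b)$, and the $\mathcal{A}_\Phi(\Phi)$-linearity of \eqref{inclusionnatural} converts this into $a\cdot i_*b$ after inclusion.

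For (3), note that the restriction $\mathcal{A}_\Phi(\Phi)\to\mathcal{A}_\Phi([\Star_\Phi(\tau)])$ is surjective and $p^*$ is an isomorphism, so $i^*\colon H(\Phi)\to H(\overline{\Star}_\Phi(\tau))$ is surjective. Writing $a=i^*b$ for some $b\in H(\Phi)$ and specializing \eqref{profor} to its first argument $1$ yields $\int_{\overline{\Star}_\Phi(\tau)}i^*b=\int_\Phi \phi_\tau\cdot b$, and by (1) the right-hand side equals $\int_\Phi i_*i^*b=\int_\Phi i_*a$. Both ends depend only on $a$, so the identity is independent of the chosen lift $b$.

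I do not anticipate a substantive obstacle: once the preparatory material from \cite{HLIJ} is in place these statements are essentially bookkeeping. The one point that merits explicit verification is that $\phi_\tau$ genuinely belongs to $\Gamma_{\Star_\Phi(\tau)}\mathcal{A}_\Phi$; this follows from the product formula $\phi_\tau=\prod_{\rho\in[\tau](1)}\phi_\rho$ together with the observation that $\bigcap_{\rho\in[\tau](1)}\Star_\Phi(\rho)=\Star_\Phi(\tau)$, so the product of functions each supported on $\Star_\Phi(\rho)$ is supported on $\Star_\Phi(\tau)$.
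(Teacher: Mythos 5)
Your proof is correct, and parts (1) and (3) follow the paper's own route: (1) is exactly the unwinding of the definition that the paper dismisses as ``clear'', and (3) is the same specialization of \eqref{profor} (via surjectivity of $i^*$ and part (1)) that the paper invokes. Where you genuinely diverge is part (2). The paper proves the projection formula by duality: it computes $\int_\Phi c\cdot i_*(i^*a\cdot b)$ for an arbitrary test class $c$ using \eqref{profor}, matches it with $\int_\Phi c\cdot(a\cdot i_*b)$, and concludes from the non-degeneracy of the Poincar\'e pairing. You instead argue directly at the level of modules: since $\Gamma_{\Star_\Phi(\tau)}\mathcal{A}_\Phi$ is an ideal of $\mathcal{A}_\Phi(\Phi)$, the inclusion \eqref{inclusionnatural} is linear over $H(\Phi)$ (acting through the restriction $r$), and each stage of the composition defining $i_*$ respects this module structure, so the projection formula is a chain of tautologies. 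Your argument is more elementary in that it does not use non-degeneracy of the pairing (hence, in principle, not completeness of $\Phi$), at the cost of having to verify the compatibility of the module structures under the extension-by-zero identification --- which you do, correctly, including the supporting observation that $\phi_\tau$ lies in $\Gamma_{\Star_\Phi(\tau)}\mathcal{A}_\Phi$ because $\bigcap_{\rho\in[\tau](1)}\Star_\Phi(\rho)=\Star_\Phi(\tau)$. The paper's duality argument is shorter to write but leans on the machinery of \eqref{profor} and Brion's theorem; either proof is acceptable here.
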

\begin{proof}
{~}
\begin{enumerate}
\item Clear from the definition of the Gysin map.

\item Let $a\in H(\Phi)$ and $b\in H(\overline{\Star}_\Phi(\tau))$ be fixed. For any $c\in H(\Phi)$ the equality \eqref{profor} shows that
\[
\int_{\Phi}c\cdot i_*(i^*a\cdot b) = \int_{\overline{\Star}_\Phi(\tau)}i^*c\cdot i^*a\cdot b
        = \int_{\overline{\Star}_\Phi(\tau)}i^*\left(c\cdot a\right)\cdot b
        = \int_{\Phi}c\cdot a\cdot i_*b
        = \int_{\Phi}c\cdot \left(a\cdot i_*b\right).
\]
Since the Poincar\'e pairing is non-degenerate, it follows that $i_*(i^*a\cdot b) = a\cdot i_*b$.

\item The statement follows directly from \eqref{profor}.
\end{enumerate}
\end{proof}

\subsection{Decomposition and the Gysin map for subdivisions}\label{subsection: Decomposition and the Gysin map} 

\begin{thm}[M.~Brion, \cite{B}, 2.3]
Suppose that $\pi\colon \Psi \to \Phi$ is a simplicial subdivision of $\Phi$. Then, there exists a unique $H^0(\Phi;\mathcal{A}_\Phi)$-linear map
\begin{equation}\label{brion's gysin}
\pi_* \colon H^0(\Phi;\mathcal{A}_\Psi) \to H^0(\Phi;\mathcal{A}_\Phi)
\end{equation}
such that $\pi_*(1) = 1$. Namely, for $f\in H^0(\Phi;\mathcal{A}_\Psi)$ and a maximal cone $\sigma\in\Phi$
\begin{equation*}
\pi_*(f)_\sigma = F_\sigma\sum_{\tau\in\pi^{-1}([\sigma])(n)} \dfrac{f_\tau}{F_\tau}
\end{equation*}
\end{thm}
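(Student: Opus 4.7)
The plan is to prove uniqueness first, then existence together with the explicit formula.

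For uniqueness I would exploit the non-degeneracy of the Poincar\'e pairing on a complete simplicial fan. Assume first $\Phi$ (hence $\Psi$) is complete. Given two candidate maps $\pi_*^{(1)}, \pi_*^{(2)}$, the compositions $\zeta_\Phi \circ \pi_*^{(j)}\colon H^0(\Psi;\mathcal{A}_\Psi) \to A_V(-2n)$ are $A_V$-linear and send $1$ to $\zeta_\Phi(1)$. Because $\zeta_\Phi(1) = \zeta_\Psi(1)$ (both compute the Euclidean volume of the common support with respect to $\Omega_V$), Brion's one-dimensionality theorem applied to $\Psi$ forces both to equal $\zeta_\Psi$. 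The relation $\zeta_\Phi(\pi_*(f)\cdot g) = \zeta_\Phi(\pi_*(f\cdot \pi^* g)) = \zeta_\Psi(f\cdot \pi^* g)$, combined with non-degeneracy of the Poincar\'e pairing on $H^0(\Phi;\mathcal{A}_\Phi)$, then pins $\pi_*(f)$ down. The non-complete case reduces to the complete one by embedding $\Phi$ in a complete simplicial fan $\widetilde\Phi$ and extending $\Psi$ to a compatible simplicial subdivision $\widetilde\Psi$ of $\widetilde\Phi$, then restricting to the cones inside $\Phi$.

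For existence I would define $\pi_*$ by the stated formula on each top-dimensional cone $\sigma\in\Phi(n)$. The $H^0(\Phi;\mathcal{A}_\Phi)$-linearity is immediate since $(\pi^*g)_\tau = g_\sigma$ for all $\tau\in\pi^{-1}([\sigma])(n)$, allowing $g_\sigma$ to be factored out of the sum. The equality $\pi_*(1) = 1$ reduces to the identity $\sum_{\tau\in\pi^{-1}([\sigma])(n)} 1/F_\tau = 1/F_\sigma$, a Brion-type identity encoding additivity of the Laplace-transform integral $\int e^{-\langle y,x\rangle}dx$ under simplicial decomposition of $\sigma$. Compatibility of the family $\{\pi_*(f)_\sigma\}_{\sigma\in\Phi(n)}$ across shared facets is equally straightforward: restricting the formula to a facet shared by two maximal cones gives, from either side, the analogous expression applied to the induced subdivision of the facet.

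The main technical obstacle is \emph{polynomiality}: showing that $F_\sigma\sum_\tau f_\tau/F_\tau$ lies in $A$ rather than only in its fraction field. I would reduce to a single star subdivision using Theorem~\ref{thm: subdivision theorem}: once the statement (together with its uniqueness) is known for each elementary factor, composition yields the full result. For a star subdivision at a face $\eta\leqslant\sigma$ along a new ray $\rho$ in the relative interior of $\eta$, the top cones in $\pi^{-1}([\sigma])(n)$ are obtained by replacing each ray of $\eta$ in turn by $\rho$. The linear relation $v_\rho = \sum_i a_i v_{\rho_i}$ (with $a_i > 0$) among the primitive ray generators then drives a direct partial-fractions manipulation that cancels all denominators in $F_\sigma \sum_\tau f_\tau/F_\tau$ and exhibits the result as a polynomial; this is essentially the same identity that underlies additivity of the local generating function $1/F_\sigma$ under simplicial decomposition of $\sigma$.
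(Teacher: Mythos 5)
First, a framing point: the paper does not prove this statement --- it is imported verbatim from Brion \cite{B}, Theorem 2.3, and used as a black box --- so there is no internal proof to compare yours against. Judged on its own terms, your outline collects the right ingredients (one-dimensionality of $\Hom_{A_V}(H^0(\Psi;\mathcal{A}_\Psi),A_V(-2n))$, non-degeneracy of the Poincar\'e pairing, and the valuation identity $\sum_{\tau}1/F_\tau=1/F_\sigma$), but two steps fail as written. In the uniqueness argument you fix the proportionality constant by evaluating at $1$ and asserting that $\zeta_\Phi(1)=\zeta_\Psi(1)$ equals the volume of the support. This is false: $\zeta$ lowers degree by $2n$, so $\zeta_\Phi(1)$ and $\zeta_\Psi(1)$ lie in the degree $-2n$ component of $A_V$ and are both zero (this is precisely Brion's identity $\sum_{\sigma\in\Phi(n)}1/F_\sigma=0$ for a complete fan); evaluation at $1$ yields $0=0$ and determines nothing, while your final appeal to non-degeneracy genuinely needs the two compositions $\zeta_\Phi\circ\pi_*^{(j)}=c_j\zeta_\Psi$ to have the same constant. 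The repair is easy but different: apply the identity to $f=\pi^*g$, use $\pi_*^{(j)}(\pi^*g)=g\cdot\pi_*^{(j)}(1)=g$ to get $c_j\,\zeta_\Psi(\pi^*g)=\zeta_\Phi(g)$ for all $g$, and choose $g$ with $\zeta_\Phi(g)\neq0$ (e.g.\ a Courant function of a maximal cone) to force $c_1=c_2$.

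The second gap is the reduction of existence and polynomiality to a single star subdivision ``using Theorem~\ref{thm: subdivision theorem}.'' That theorem is a \emph{weak} factorization statement: it relates $\Psi$ and $\Phi$ by a zigzag $\Phi\leftarrow\Psi_1\rightarrow\Phi_1\leftarrow\cdots$, not by a chain of star subdivisions factoring $\pi$ itself, so ``composition yields the full result'' does not follow --- the intermediate fans need not sit between $\Psi$ and $\Phi$. You would need either a common refinement $\Theta$ reachable from both $\Phi$ and $\Psi$ by star subdivisions (then set $\pi_*:=p_*\circ q^*$ and invoke uniqueness), or, better, bypass factorization entirely: define $\pi_*(f)$ as the element representing the $A_V$-linear functional $g\mapsto\zeta_\Psi(f\cdot\pi^*g)$ under the self-duality of Brion's Theorem 2.4 quoted in the paper. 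That route makes $H^0(\Phi;\mathcal{A}_\Phi)$-linearity formal and reduces $\pi_*(1)=1$ to the single identity $\zeta_\Psi\circ\pi^*=\zeta_\Phi$, i.e.\ to the valuation property you correctly identify via the Laplace transform; only then is the explicit local formula verified. Relatedly, the facet-compatibility of your local expressions is not ``straightforward'': the individual terms $F_\sigma f_\tau/F_\tau$ have poles along the span of a shared facet, so one cannot simply restrict the formula term by term.
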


The maps \eqref{brion's gysin} induces the \emph{Gysin map} of graded vector spaces (which is denoted in the same way as it is clear from context)
\begin{equation}\label{gysin map}
\pi_* \colon H(\Psi) \to H(\Phi).
\end{equation}

The maps \eqref{brion's gysin} and \eqref{gysin map} satisfy
\begin{equation*}
\pi_*\circ\pi^* = \id,\ \ \ \zeta_\Psi = \zeta_\Phi\circ\pi_*,\ \ \ \int_\Psi = \int_\Phi\circ\pi_* .
\end{equation*}

Therefore, $\pi^*$ is injective, $\pi_*$ is surjective and there is a canonical direct sum decomposition
\begin{equation}\label{decomposition under subdivision}
H(\Psi) = \im(\pi^*)\oplus\ker(\pi_*) .
\end{equation}

\begin{lemma}\label{lemma: orthognal decomposition}
The decomposition \eqref{decomposition under subdivision} is orthogonal with respect to the Poincar\'e pairing.
\end{lemma}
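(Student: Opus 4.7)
The plan is to deduce the orthogonality directly from the $H^0(\Phi;\mathcal{A}_\Phi)$-linearity of $\pi_*$ (that is, the projection formula) combined with the identity $\int_\Psi = \int_\Phi\circ\pi_*$ already recorded in subsection \ref{subsection: Decomposition and the Gysin map}. No deeper input is needed.

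Concretely, fix $a\in\im(\pi^*)$ and $b\in\ker(\pi_*)$ and write $a=\pi^*(a')$ for some $a'\in H(\Phi)$. The pairing I must show to vanish is $\int_\Psi a\cdot b = \int_\Psi \pi^*(a')\cdot b$. Applying $\int_\Psi=\int_\Phi\circ\pi_*$, this reduces to showing that
\[
\pi_*\bigl(\pi^*(a')\cdot b\bigr)=0\ \ \text{in}\ \ H(\Phi).
\]

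The key step is the projection formula $\pi_*(\pi^*(a')\cdot b)=a'\cdot\pi_*(b)$. This is nothing but a restatement of the $H^0(\Phi;\mathcal{A}_\Phi)$-linearity of the Gysin map \eqref{brion's gysin} of Brion: since the $\mathcal{A}_\Phi$-module structure on $\pi_*\mathcal{A}_\Psi$ is induced by the morphism of sheaves of algebras $\pi^*\colon\mathcal{A}_\Phi\to\pi_*\mathcal{A}_\Psi$, multiplication by $\pi^*(a')$ in $H^0(\Phi;\mathcal{A}_\Psi)$ agrees with the action of $a'$ coming from the module structure, and $\pi_*$ is module-linear by construction. This formula then descends to the quotients by $A^+$, giving the corresponding identity in the (intersection) cohomology rings $H(\Psi)$ and $H(\Phi)$.

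With the projection formula in hand, the vanishing is immediate: $\pi_*(b)=0$ yields $\pi_*(\pi^*(a')\cdot b)=a'\cdot\pi_*(b)=0$, hence $\int_\Psi a\cdot b=0$. There is no genuine obstacle; the argument is purely formal manipulation of the identities $\pi_*\circ\pi^*=\id$, $\int_\Psi=\int_\Phi\circ\pi_*$, and module-linearity of $\pi_*$, all of which are already stated in subsection \ref{subsection: Decomposition and the Gysin map}. The only point worth flagging is that one should verify that the projection formula passes from the graded $A_V$-modules $H^0(\Phi;\mathcal{A}_\Psi)$ and $H^0(\Phi;\mathcal{A}_\Phi)$ to their quotients by $A_V^+$, but this is automatic since $\pi_*$ is $A_V$-linear and therefore carries $A_V^+\cdot H^0(\Phi;\mathcal{A}_\Psi)$ into $A_V^+\cdot H^0(\Phi;\mathcal{A}_\Phi)$.
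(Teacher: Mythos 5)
Your proof is correct and is essentially the paper's own argument: both reduce to the computation $\int_\Psi \pi^*(a')\cdot b = \int_\Phi \pi_*(\pi^*(a')\cdot b) = \int_\Phi a'\cdot\pi_*(b) = 0$, using $\int_\Psi = \int_\Phi\circ\pi_*$ and the projection formula coming from the $H^0(\Phi;\mathcal{A}_\Phi)$-linearity of Brion's map \eqref{brion's gysin}. Your extra remark about the formula descending modulo $A^+$ is a reasonable point of care but adds nothing beyond what the paper takes for granted.
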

\begin{proof}
Let $\alpha\in H(\Phi)$, $\beta\in\ker(\pi_*)$. Then,
\begin{equation*}
\int_\Psi \pi^*(\alpha)\cdot\beta = \int_\Phi\pi_*(\pi^*(\alpha)\cdot\beta) = \int_\Phi\alpha\cdot\pi_*(\beta) = 0 .
\end{equation*}
\end{proof}
\subsection{Gysin maps for star subdivisions.} 
Suppose that $\pi: \Psi \to \Phi$ is a star subdivision at $\tau\in\Phi(k)$ along a ray $\rho\in\Psi$. In what follows we shall refer to the following diagram:
\begin{equation}
\begin{tikzcd}
\Star(\rho) \arrow{r}{\widetilde{i}}\arrow{d}{\widetilde{\pi}}& \Psi\arrow{d}{\pi} \\
 \Star(\tau) \arrow{r}{i} &\Phi
\end{tikzcd}.
\end{equation}
Let $\Upsilon$ denote the fan in $V$ given by
\begin{equation*}
    \Upsilon :=\Phi \backslash \Star_\Phi(\tau) = \Psi\backslash \Star_\Psi(\rho),
\end{equation*} 
and let $\Upsilon^0 := \Upsilon\backslash \bdstar_\Phi(\tau) = \Upsilon\backslash \bdstar_\Psi(\rho)$.
The fan $\Upsilon$ is quasi-convex because its boundary is the same as that of $[\Star_\Phi(\tau)]$ and $[\Star_\Psi(\rho)]$. It is clear that $\mathcal{A}_\Phi(\Upsilon) = \mathcal{A}_\Psi(\Upsilon)$. In addition, let
\begin{align*}
    \Gamma_{\Upsilon^0}\mathcal{A}_\Phi &:=\ker\left(\mathcal{A}_\Phi(\Upsilon)\to \mathcal{A}_\Phi(\bdstar_\Phi(\tau)\right)=\ker\left(\mathcal{A}_\Psi(\Upsilon)\to \mathcal{A}_\Phi(\bdstar_\Psi(\rho)\right). 
\end{align*}
 and let 
    \[H(\Upsilon,\partial\Upsilon) := \Gamma_{\Upsilon^0}\mathcal{A}_\Phi/A^+\Gamma_{\Upsilon^0}\mathcal{A}_\Phi.\]
Observe that $\pi^*$ induces a morphism
\begin{equation*}
    {\pi}^*:\mathcal{A}_\Phi([\Star_\Phi(\tau)]) \to \mathcal{A}_\Psi([\Star_\Psi(\rho)]),
\end{equation*}
and its restriction to $\Gamma_{\Upsilon^0}\mathcal{A}_\Phi$ is just the identity map. Hence, $\pi^*$ gives rise to the commutative diagram with exact rows
\[
\begin{CD}
0 @>>> H(\Upsilon,\partial\Upsilon) @>>> H(\Phi) @>>> H([\Star_\Phi(\tau)]) @>>> 0 \\
& & @| @VV{\pi^*}V @VV{\pi^*}V \\
0 @>>> H(\Upsilon,\partial\Upsilon) @>>> H(\Psi) @>>> H([\Star_\Psi(\rho)]) @>>> 0
\end{CD}
\]
where the horizontal morphisms are the natural inclusions and restrictions.
Therefore, $\pi^*$ induces a map of algebras
\begin{equation}
    \widetilde{\pi}^*: H(\overline{\Star}_\Phi(\tau))\to H(\overline{\Star}_\Psi(\rho)).
\end{equation}
On the other hand, the commutative diagram with exact rows given by the natural inclusions and restrictions
\[
\begin{CD}
0 @>>> \Gamma_{\Star_\Psi(\rho)}\mathcal{A}_\Psi @>>> \mathcal{A}_\Psi(\Psi) @>>> \mathcal{A}_\Psi(\Upsilon) @>>> 0 \\
& & @VV{\pi_*}V @VV{\pi_*}V @| \\
0 @>>> \Gamma_{\Star_\Phi(\tau)}\mathcal{A}_\Phi @>>> \mathcal{A}_\Phi(\Phi) @>>> \mathcal{A}_\Psi(\Upsilon) @>>> 0
\end{CD}
\]
shows that the restriction of $\pi_*$ to $\Gamma_{\Star_\Psi(\rho)}\mathcal{A}_\Psi$ gives rise to a surjective map
\begin{equation}
    {\pi}_*:H([\Star_\Psi(\rho)],\bdstar_\Psi(\rho))\to H([\Star_\Phi(\tau)],\bdstar_\Phi(\tau)).
\end{equation}
In particular, $\pi_*$ gives rise to the following commutative diagram with exact rows:
\[
\begin{CD}
0 @>>> H([\Star_\Psi(\rho)],\bdstar_\Psi(\rho)) @>>> H(\Psi) @>>> H(\Upsilon) @>>> 0 \\
& & @VV{\pi_*}V @VV{\pi_*}V @| \\
0 @>>> H([\Star_\Phi(\tau)],\bdstar_\Phi(\tau)) @>>> H(\Phi) @>>> H(\Upsilon) @>>> 0
\end{CD}
\]
The above diagram shows that the map 
\[
\pi_* \circ \widetilde{i}_*:H(\overline{\Star}_\Psi(\rho))(2) \to H(\Phi), 
\]
factors through $i_*:H(\overline{\Star}_\Phi(\tau))(2k)\to H(\Phi)$ and $\pi_*$ induces the \emph{Gysin map}
\begin{equation}\label{gysin map star subdivision}
    \widetilde{\pi}_*: H(\overline{\Star}_\Psi(\rho))(2(1-k)) \to H(\overline{\Star}_\Phi(\tau)) .
\end{equation}

\begin{prop}
{~}
\begin{enumerate}
\item The Gysin map \eqref{gysin map star subdivision} satisfies the projection formula. Namely, for  $a\in H(\overline{\Star}_\Psi(\rho))$ and $b\in H(\overline{\Star}_\Phi(\tau))$,
\[
\widetilde{\pi}_*\left(a\cdot \widetilde{\pi}^*b\right) = \left(\widetilde{\pi}_*a\right) \cdot b .
\]

\item For $a\in H(\overline{\Star}_\Psi(\rho))$
\[
\int_{\overline{\Star}_\Phi(\tau)}\widetilde{\pi}_*a = \int_{\overline{\Star}_\Psi(\rho)} a .
\]
\end{enumerate}
\end{prop}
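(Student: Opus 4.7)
The plan is to derive both statements from the identity $i_\ast\circ\widetilde{\pi}_\ast = \pi_\ast\circ\widetilde{i}_\ast$ that is built into the definition of $\widetilde{\pi}_\ast$, combined with the compatibilities $\int_{\overline{\Star}_\Phi(\tau)} = \int_\Phi\circ\, i_\ast$ and $\int_\Psi = \int_\Phi\circ\,\pi_\ast$ already at our disposal, the projection formulas established for $i_\ast,\widetilde{i}_\ast$ in Proposition \ref{prop: gysinstars} and for $\pi_\ast$ (the $H^0(\Phi;\mathcal{A}_\Phi)$-linearity of Brion's map), and the base-change identity $\widetilde{i}^\ast\pi^\ast = \widetilde{\pi}^\ast i^\ast$, which I will justify as follows: both sides are induced on $H(-)$ by the restriction $\mathcal{A}_\Phi(\Phi)\to\mathcal{A}_\Psi([\Star_\Psi(\rho)])$, factored through either $[\Star_\Phi(\tau)]$ or through $\Psi$, and these two factorizations agree at the level of sheaves because $\pi$ restricts to $\widetilde{\pi}$ on stars.

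I will prove (2) first. For $a\in H(\overline{\Star}_\Psi(\rho))$,
\[
\int_{\overline{\Star}_\Phi(\tau)}\widetilde{\pi}_\ast a = \int_\Phi i_\ast\widetilde{\pi}_\ast a = \int_\Phi \pi_\ast\widetilde{i}_\ast a = \int_\Psi \widetilde{i}_\ast a = \int_{\overline{\Star}_\Psi(\rho)} a,
\]
using Proposition \ref{prop: gysinstars}(3) for $i$ and $\widetilde{i}$, the defining identity $i_\ast\widetilde{\pi}_\ast=\pi_\ast\widetilde{i}_\ast$, and the compatibility $\int_\Psi=\int_\Phi\circ\,\pi_\ast$.

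For (1), the strategy is to test the claimed identity against an arbitrary $c\in H(\overline{\Star}_\Phi(\tau))$ using the non-degeneracy of the Poincar\'e pairing on the complete simplicial fan $\overline{\Star}_\Phi(\tau)$. The restriction $i^\ast\colon H(\Phi)\to H(\overline{\Star}_\Phi(\tau))$ is surjective (being a composition of the surjection \eqref{restriction map} with the inverse of the isomorphism \eqref{mapanatural}), so we may pick $c''\in H(\Phi)$ with $i^\ast c'' = c$. Then, using (2), the projection formula for $i$, the defining identity $i_\ast\widetilde{\pi}_\ast=\pi_\ast\widetilde{i}_\ast$, the projection formula for $\pi$, the projection formula for $\widetilde{i}$, Proposition \ref{prop: gysinstars}(3) for $\widetilde{i}$, and finally $\widetilde{i}^\ast\pi^\ast=\widetilde{\pi}^\ast i^\ast$, we compute
\[
\int_{\overline{\Star}_\Phi(\tau)} c\cdot\widetilde{\pi}_\ast(a\cdot\widetilde{\pi}^\ast b)
=\int_\Phi i_\ast\bigl(i^\ast c''\cdot\widetilde{\pi}_\ast(a\cdot\widetilde{\pi}^\ast b)\bigr)
=\int_\Psi \pi^\ast c''\cdot \widetilde{i}_\ast(a\cdot\widetilde{\pi}^\ast b)
=\int_{\overline{\Star}_\Psi(\rho)}\widetilde{\pi}^\ast(c\cdot b)\cdot a.
\]
The same chain applied in the reverse order, starting from $\int_{\overline{\Star}_\Phi(\tau)} (c\cdot b)\cdot\widetilde{\pi}_\ast a$ and using (2) together with the projection formulas for $i$, $\pi$, and $\widetilde{i}$ in sequence, produces the identical value $\int_{\overline{\Star}_\Psi(\rho)}\widetilde{\pi}^\ast(c\cdot b)\cdot a$. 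Non-degeneracy of the Poincar\'e pairing on $H(\overline{\Star}_\Phi(\tau))$ then forces $\widetilde{\pi}_\ast(a\cdot\widetilde{\pi}^\ast b) = (\widetilde{\pi}_\ast a)\cdot b$.

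The expected main obstacle is bookkeeping: threading six distinct functorialities ($i^\ast$, $i_\ast$, $\widetilde{i}^\ast$, $\widetilde{i}_\ast$, $\pi^\ast$, $\pi_\ast$, and their star-counterparts) through a single pairing computation, while verifying the base-change compatibility $\widetilde{i}^\ast\pi^\ast=\widetilde{\pi}^\ast i^\ast$ cleanly at the level of $H$. Once this compatibility is confirmed from the definitions (restriction of polynomial functions commutes with subdivision on any subfan), the rest is a direct assembly of previously proved identities.
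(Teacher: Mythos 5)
Your proof is correct and follows essentially the same route as the paper: part (2) is identical, and part (1) rests on the same ingredients ($i_*\widetilde{\pi}_*=\pi_*\widetilde{i}_*$, surjectivity of $i^*$, the base-change identity $\widetilde{i}^*\pi^*=\widetilde{\pi}^*i^*$, and the already-established projection formulas for $i_*$, $\widetilde{i}_*$, $\pi_*$). The only cosmetic difference is that you conclude part (1) by pairing against an arbitrary $c$ and invoking non-degeneracy of the Poincar\'e pairing on $H(\overline{\Star}_\Phi(\tau))$, whereas the paper runs the same chain of identities directly in $H(\Phi)$ and finishes with the injectivity of $i_*$.
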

\begin{proof}
{~}
\begin{enumerate}
\item Let $a\in H(\overline{\Star}_\Psi(\rho))$ and $b\in H(\overline{\Star}_\Phi(\tau))$. Since $i_*$ is injective, it suffices to show that $i_*\left(\widetilde{\pi}_*\left(a\cdot \widetilde{\pi}^*b\right) \right)= i_*\left(\left(\widetilde{\pi}_*a\right) \cdot b\right)$. Since $i^*$ is surjective, there exists $\beta\in H(\Phi)$ such that $i^*\beta =b$. Then,
\begin{multline*}
i_*\left(\widetilde{\pi}_*\left(a\cdot \widetilde{\pi}^*b\right)\right) = \pi_*\left(\widetilde{i}_*\left(a\cdot \widetilde{\pi}^*i^*\beta\right)\right) = \pi_*\left(\widetilde{i}_*\left(a\cdot \widetilde{i}^*\pi^*\beta\right)\right) = \pi_*\left(\left(\widetilde{i}_*a\right)\cdot\pi^*\beta\right) \\
=\left(\pi_*\widetilde{i}_*a\right)\cdot \beta =\left(i_* \widetilde{\pi}_*a\right)\cdot \beta =i_*\left((\widetilde{\pi}_*a)\cdot i^*\beta\right)=i_*\left((\widetilde{\pi}_*a)\cdot b\right).
\end{multline*}

\item It follows from Proposition \ref{prop: gysinstars} (3) and $i_*\widetilde{\pi}_* = \pi_*\widetilde{i}_*$ that
\[
    \int\limits_{\overline{\Star}_\Phi(\tau)}\!\!\!\!\! \widetilde{\pi}_*a =\int\limits_{\Phi}i_*\widetilde{\pi}_*a=\int\limits_{\Phi}\pi_*\widetilde{i}_*a= \int\limits_{\Psi} \widetilde{i}_*a =\int\limits_{\overline{\Star}_\Psi(\rho)}\!\!\!\!\! a.
\]
\end{enumerate}
\end{proof}

Let $\psi_\rho\in\mathcal{A}_\Psi(\Psi)$ denote a Courant function of $\rho$ and let $c:= \widetilde{i}^*\psi_\rho \in H^2(\Star(\rho))$. The map $\widetilde{\pi}^*\colon H(\Star(\tau)) \to H(\Star(\rho))$ induces an isomorphism $H(\Star(\tau))[c]/(p(c)) \xrightarrow{\cong} H(\Star(\rho))$, where $p(c)$ is a (monic) polynomial of degree $\dim\tau$.
\begin{prop}\label{exceptionaldivisor}
    In the situation above $(-1)^{k-1}\widetilde{\pi}_*(c^{k-1})$ is positive.
\end{prop}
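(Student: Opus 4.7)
The plan is to show $\widetilde{\pi}_*(c^{k-1}) = (-1)^{k-1}$, from which positivity of $(-1)^{k-1}\widetilde{\pi}_*(c^{k-1}) = 1$ is immediate. A degree count confirms that $\widetilde{\pi}_*(c^{k-1})$ lies in $H^0(\overline{\Star}_\Phi(\tau)) \cong \mathbb{R}$: indeed $c^{k-1} \in H^{2(k-1)}(\overline{\Star}_\Psi(\rho))$, while $\widetilde{\pi}_*$ shifts cohomological degree by $2(1-k)$. Call this scalar $\lambda$; the task is to determine it.

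The key reduction uses the defining identity $i_*\widetilde{\pi}_* = \pi_*\widetilde{i}_*$ of $\widetilde{\pi}_*$. On the one hand, Proposition~\ref{prop: gysinstars}(1) gives $i_*(1) = i_*(i^*(1)) = \phi_\tau$, and by $H(\Phi)$-linearity of $i_*$ (the projection formula) one obtains $i_*(\lambda) = \lambda\phi_\tau$. On the other hand, Proposition~\ref{prop: gysinstars}(1) applied to the ray $\rho\in\Psi$ yields $\widetilde{i}_*\widetilde{i}^* = \psi_\rho\cdot$, so
\[
\widetilde{i}_*(c^{k-1}) = \widetilde{i}_*(\widetilde{i}^*\psi_\rho^{k-1}) = \psi_\rho\cdot\psi_\rho^{k-1} = \psi_\rho^k.
\]
Consequently the identity $\lambda\phi_\tau = \pi_*(\psi_\rho^k)$ in $H^{2k}(\Phi)$ pins down $\lambda$, and it remains to evaluate $\pi_*(\psi_\rho^k)$ explicitly.

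This last step is a direct application of Brion's formula for $\pi_*$. For a maximal cone $\sigma\in\Phi$ not containing $\tau$, both $\psi_\rho^k|_\sigma$ and $\phi_\tau|_\sigma$ vanish, so the equation holds trivially. For $\sigma = \tau + \eta'$ containing $\tau$, the star subdivision replaces $\sigma$ by the $k$ top-dimensional cones $\sigma'_i = \rho + (\tau\smallsetminus\rho_i) + \eta'$, $i = 1, \ldots, k$. Writing all relevant Courant functions in the basis supplied by the rays of $\sigma$ (in the unimodular case $\psi_\rho|_{\sigma'_i} = x_i$, where $x_i$ denotes the Courant function of $\rho_i$ in $\Phi$), Brion's sum collapses to
\[
\pi_*(\psi_\rho^k)_\sigma = \phi_\tau|_\sigma \cdot \sum_{i=1}^{k}\frac{x_i^{k-1}}{\prod_{j\neq i}(x_j - x_i)}.
\]
The Lagrange interpolation identity $\sum_i x_i^{k-1}/\prod_{j\neq i}(x_i-x_j) = h_0 = 1$, combined with $\prod_{j\neq i}(x_j-x_i) = (-1)^{k-1}\prod_{j\neq i}(x_i-x_j)$, yields $\pi_*(\psi_\rho^k)_\sigma = (-1)^{k-1}\phi_\tau|_\sigma$.

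The main obstacle lies in executing this Brion computation carefully: the Courant functions $\psi_{\rho_j}^\Psi$ of the rays of $\tau$, as seen in $\Psi$, are not the pullbacks $\pi^*\psi_{\rho_j}^\Phi$ but differ from them by explicit multiples of $\psi_\rho$ (the toric analogue of the blowup formula), and these corrections must be rewritten in the common basis of $\sigma$ before the Lagrange identity applies cleanly. Once this is done, comparing $\lambda\phi_\tau = (-1)^{k-1}\phi_\tau$ in $H^{2k}(\Phi)$ and using that $\phi_\tau$ is a nonzero class forces $\lambda = (-1)^{k-1}$, whence $(-1)^{k-1}\widetilde{\pi}_*(c^{k-1}) = 1 > 0$.
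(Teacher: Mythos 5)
Your overall strategy is sound and shares its first half with the paper's: both arguments use $i_*\widetilde{\pi}_* = \pi_*\widetilde{i}_*$ together with $\widetilde{i}_*(c^{k-1}) = \widetilde{i}_*\widetilde{i}^*(\psi_\rho^{k-1}) = \psi_\rho^k$ to reduce the claim to understanding $\pi_*(\psi_\rho^k)$ (the paper extracts the scalar by pairing against a complementary Courant function $\phi_\xi$ with $\int_\Phi\phi_\tau\phi_\xi = 1$, which is also the cleanest way to justify your step ``$\phi_\tau$ is a nonzero class, so $\lambda$ is determined''). Where you diverge is the final evaluation: you propose to compute $\pi_*(\psi_\rho^k)$ cone by cone from Brion's explicit formula and invoke the Lagrange interpolation identity $\sum_i x_i^{k-1}/\prod_{j\neq i}(x_i-x_j)=1$, whereas the paper never touches Brion's formula. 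Instead it uses, for each ray $\rho_j$ of a facet $\eta$ of $\tau$, a globally linear functional vanishing on the opposite facet of $\tau+\xi$ to produce a relation $-\psi_\rho = \lambda_j\psi_j + (\text{terms whose supports miss } \Star(\xi+\eta+\rho))$ with $\lambda_j>0$; multiplying these out converts $\psi_\rho^k\pi^*\phi_\xi$ into $(-1)^{k-1}\bigl(\prod_j\lambda_j\bigr)\psi_\eta\psi_\rho\pi^*\phi_\xi$, a positive multiple of a Courant function of a top-dimensional cone, whose integral is visibly positive. The paper's route is both shorter and valid verbatim for an arbitrary simplicial star subdivision.

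That last point is where your write-up has a real gap. The proposition is invoked (in Proposition \ref{prop: subdivision signature}) for arbitrary star subdivisions of arbitrary complete \emph{simplicial} fans, and $\psi_\rho$ there is only ``a'' Courant function, determined up to a positive scalar. So the exact value $\widetilde{\pi}_*(c^{k-1}) = (-1)^{k-1}$ cannot hold in general --- only $(-1)^{k-1}\widetilde{\pi}_*(c^{k-1})>0$ is well-posed --- and your normalization $\psi_\rho\vert_{\sigma'_i}=x_i$ is special to the regular unimodular case. In general one has $\psi_\rho\vert_{\sigma'_i}=c_ix_i$ and the $\Psi$-Courant functions of the $\rho_j$ restrict to $d_jx_j-e_{ij}x_i$ with various positive constants, so the Lagrange identity does not apply until you rescale the coordinates; this is exactly the step you flag as ``the main obstacle'' and then do not carry out. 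The sign bookkeeping does survive the rescaling (each correction contributes a positive factor), so your approach can be completed, but as written the decisive computation is asserted rather than proved, and in the one case where you do make it explicit you are implicitly assuming hypotheses the proposition does not grant.
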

\begin{proof}
Let $\xi\in[\Star_{\Phi}(\tau)]$ be such that $\xi\cap\tau=\oo$ and $\tau+\xi\in \Phi(n)$. Let $\phi_\tau, \phi_\xi\in \mathcal{A}_\Phi(\Phi)$ denote Courant functions for $\tau$ and $\xi$ such that
\[
\int_{\Phi}\phi_\xi\cdot\phi_\tau = 1 .
\]
It follows that
\[
\int\limits_{\overline{\Star}_\Phi(\tau)}\!\!\!\!\! i^*\phi_\xi = \int\limits_{\Phi}\phi_\tau\cdot\phi_\xi = 1 .
\]    
Observe that
\begin{multline*}
\widetilde{\pi}_*(c^{k-1}) =\int_{\overline{\Star}_{\Phi}(\tau)} \widetilde{\pi}_*(c^{k-1})\cdot i^*\phi_\xi = \int_{\Phi} i_*\left(\widetilde{\pi}_*(c^{k-1})\cdot i^*\phi_\xi\right) \\
        =\int_{\Phi} \left(i_*\widetilde{\pi}_*(c^{k-1})\right)\cdot \phi_\xi
        =\int_{\Phi} \left(\pi_*\widetilde{i}_*(c^{k-1})\right)\cdot \phi_\xi
        =\int_{\Phi} \pi_*\left(\widetilde{i}_*(c^{k-1})\cdot \pi^*\phi_\xi\right)\\
        =\int_\Psi\widetilde{i}_*(c^{k-1})\cdot \pi^*\phi_\xi
        =\int_\Psi\psi_\rho^k\cdot \pi^*\phi_\xi .
\end{multline*}
Therefore, it is sufficient to show that
\[
(-1)^{k-1}\cdot\int_\Psi\psi_\rho^k\cdot \pi^*\phi_\xi > 0 .
\]

Let $\eta$ be a facet of $\tau$, and let $\rho_1,\dots,\rho_{k-1}$ be the (distinct) rays of $\eta$. For $1\leq j\leq k-1$ let $\psi_j \in \mathcal{A}_\Psi(\Psi)$ denote the Courant function of $\rho_j$ such that $\psi_j\vert_{\rho_j} = \phi_j\vert_{\rho_j}$. Then,
\[
\psi_{\eta} := \prod_{j=1}^{k-1}\psi_j \in \mathcal{A}_\Psi(\Psi)
\]
is a Courant function of $\eta\in\Psi$. Since $\xi+\rho$ is a cone of $\Psi$ and $\pi^*\phi_\xi\in\mathcal{A}_\Psi(\Psi)$ is a Courant function of $\xi$, it follows that $(\pi^*\phi_\xi)\cdot\psi_{\eta}\cdot \psi_\rho$ is a Courant function of the top-dimensional cone $\xi+\eta+\rho$ of $\Psi$ and, therefore,
\[
\int_{\Psi}(\pi^*\phi_\xi)\cdot\psi_{\eta}\cdot \psi_\rho>0.
\]

For $1\leq j\leq k-1$ let $L_j$ denote a linear functional which
\begin{itemize}
\item is positive on the relative interior of $\rho_j$, and
\item vanishes on the facet of $\tau+\xi$ opposite to $\rho_j$.
\end{itemize}
Each linear functional $L_j$ gives rise to a linear relation of the form
\[
-\psi_\rho = \lambda_j \psi_j + \left(\textnormal{Courant functions of rays not in } [\tau+\xi] (1) \cup \{\rho\}\right),
\]
in $\mathcal{A}_\Psi(\Psi)$, where $\lambda_j$ is a positive real number. 
Therefore, the equality
\[
\psi_\rho^k\cdot\pi^*\phi_\xi=(-1)^{k-1}\left(\prod_{j=1}^{k-1}\lambda_j\right)\cdot \psi_{\eta}\cdot \psi_\rho\cdot \pi^*\phi_\xi.
\]
holds in $H(\Psi)$. Therefore,
\[
(-1)^{k-1}\widetilde{\pi}_*(c^{k-1}) = (-1)^{k-1}\int_\Psi\psi_\rho^k\cdot \pi^*\phi_\xi =\left(\prod_{j=1}^{k-1}\lambda_j\right)\cdot \int_\Psi \psi_{\eta}\cdot \psi_\rho\cdot \pi^*\phi_\xi > 0.
\]
\end{proof}

\subsection{Integration of convex monomials}
Recall that a piecewise linear function $\ell$ on $\Phi$ is \emph{convex} (respectively, \emph{strictly convex}) if for any two top-dimensional cones $\sigma$ and $\tau$ and any vector $v$ in the interior of $\tau$ the inequality $\ell_\sigma(v)\leqslant \ell_\tau(v)$ (resp. $\ell_\sigma(v)<\ell_\tau(v)$) holds.

\begin{lemma}\label{lemma: addition of linear function preserves convexity}
If $\ell\in H^0(\Phi;\mathcal{A}_\Phi)$ is a (strictly) convex piecewise linear function, then for any $L\in V^\vee$ the piecewise linear function $L+\ell$ is (strictly) convex.
\end{lemma}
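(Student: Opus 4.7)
The plan is essentially to unwind the definitions; this lemma is a direct consequence of the fact that a globally defined linear function $L \in \dual{V}$ has the same restriction to every cone.

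First I would fix two top-dimensional cones $\sigma,\tau \in \Phi(n)$ and a vector $v$ in the interior of $\tau$. By hypothesis, $\ell$ is (strictly) convex, so $\ell_\sigma(v) \leqslant \ell_\tau(v)$ (respectively $\ell_\sigma(v) < \ell_\tau(v)$). Viewing $L$ as an element of $H^0(\Phi;\mathcal{A}_\Phi)$ via the natural map $\dual{V}_\Phi \to \mathcal{G} \hookrightarrow \mathcal{A}_\Phi$, its restriction to every maximal cone of $\Phi$ is the same linear function on $V$, namely $L$ itself. In particular $L_\sigma(v) = L(v) = L_\tau(v)$.

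Then I would compute
\begin{equation*}
(L+\ell)_\tau(v) - (L+\ell)_\sigma(v) = \bigl(L_\tau(v) - L_\sigma(v)\bigr) + \bigl(\ell_\tau(v) - \ell_\sigma(v)\bigr) = \ell_\tau(v) - \ell_\sigma(v),
\end{equation*}
which is nonnegative (respectively positive) by the convexity (respectively strict convexity) of $\ell$. Since $\sigma$, $\tau$, and $v$ were arbitrary, this establishes that $L+\ell$ is convex (respectively strictly convex).

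There is no real obstacle here; the only thing to be careful about is recording the fact that a globally linear $L$ is represented by the same element of $A_V$ on every maximal cone, so the $L$-contributions cancel in the difference defining convexity.
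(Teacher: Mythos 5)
Your proof is correct and follows essentially the same argument as the paper: both fix two maximal cones and a vector in the interior of one, observe that the globally linear $L$ satisfies $L_\sigma = L_\tau$, and conclude that the $L$-terms cancel in the convexity inequality.
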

\begin{proof}
Suppose that $\sigma$ and $\tau$ are top-dimensional cones of $\Phi$ and $v\in V$ belongs to the interior of $\tau$. Since $L$ is a linear functional on $V$, it follows that $L_\sigma = L_\tau$. Hence, 
\[
(\ell+L)_\sigma (v) = \ell_\sigma(v) + L_\sigma(v) \leqslant \ell_\tau(v) + L_\tau(v)=(\ell+L)_\tau ( v).
\]
If $\ell$ is strictly convex, then the above inequality is strict.
\end{proof}

\begin{lemma}\label{lemma: pullback preserves convexity}
If $\pi:\Psi\to \Phi$ is a subdivison and $\ell\in H^0(\Phi;\mathcal{A}_\Phi)$ is convex, then $\pi^*\ell$ is convex.
\end{lemma}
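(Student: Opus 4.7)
The plan is to unwind the definitions and use the fact that a subdivision refines the top-dimensional cells. The key observation is that every $n$-dimensional cone of $\Psi$ sits inside a unique $n$-dimensional cone of $\Phi$, so $\pi^*\ell$ is essentially computed from $\ell$ on the refined decomposition.

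First I would fix top-dimensional cones $\sigma',\tau'\in\Psi(n)$ and let $\sigma:=\pi(\sigma')$ and $\tau:=\pi(\tau')$ denote the smallest cones of $\Phi$ containing them. Since $\dim\sigma'=n$ and $\sigma'\subseteq\sigma$, the cone $\sigma$ is itself $n$-dimensional, i.e.\ $\sigma\in\Phi(n)$, and similarly $\tau\in\Phi(n)$. By definition of $\pi^*$, the linear function $(\pi^*\ell)_{\sigma'}$ is the restriction of the linear function $\ell_{\sigma}$ from $V=\Span(\sigma)$ to $\Span(\sigma')$, which is all of $V$ since $\sigma'$ is full-dimensional; hence $(\pi^*\ell)_{\sigma'}=\ell_\sigma$ as elements of $\dual{V}$, and likewise $(\pi^*\ell)_{\tau'}=\ell_\tau$.

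Next I would verify that the relative interior of $\tau'$ is contained in the relative interior of $\tau$. This is immediate: the relative interior of the $n$-dimensional cone $\tau'$ is open in $V$, it is contained in $\tau$, and any point of $\partial\tau$ lies on some facet of $\tau$, which is $(n-1)$-dimensional and therefore cannot contain an open subset of $V$. Hence every $v$ in the interior of $\tau'$ lies in the interior of $\tau$. The convexity hypothesis on $\ell$ then yields $\ell_\sigma(v)\leqslant\ell_\tau(v)$, which by the previous paragraph reads $(\pi^*\ell)_{\sigma'}(v)\leqslant(\pi^*\ell)_{\tau'}(v)$, establishing the convexity of $\pi^*\ell$.

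There is no serious obstacle in this argument; it is purely a matter of carefully identifying the cone-wise linear pieces after subdivision. The only point worth emphasizing is that strict convexity is \emph{not} preserved in general (two top-dimensional cones of $\Psi$ mapping into the same cone of $\Phi$ will satisfy equality), which is consistent with the statement of the lemma asking only for convexity.
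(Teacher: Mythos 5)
Your proof is correct and follows essentially the same route as the paper's: identify the cone-wise linear pieces of $\pi^*\ell$ on top-dimensional cones of $\Psi$ with those of $\ell$ on the containing top-dimensional cones of $\Phi$, check that an interior point of a full-dimensional cone of $\Psi$ is interior to the corresponding cone of $\Phi$, and apply the convexity of $\ell$. Your version merely spells out the interior-point verification, which the paper asserts without comment.
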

\begin{proof}
Suppose that $\sigma$ and $\tau$ are top-dimensional cones of $\Psi$ and $v\in V$ belongs to the interior of $\tau$. Then, $\pi(\sigma)$ and $\pi(\tau)$ are are top-dimensional cones of $\Phi$ and $v$ belongs to the interior of $\pi(\tau)$. 

Since $(\pi^*\ell)_{\sigma} = \ell_{\pi(\sigma)}$ and $(\pi^*\ell)_\tau = \ell_{\pi(\tau)}$ it follows that
\[
(\pi^*\ell)_{\sigma}(v) = \ell_{\pi(\sigma)}(v)\leqslant \ell_{\pi(\tau)}(v) = (\pi^*\ell)_\tau(v).
\]
\end{proof}

\begin{prop}\label{prop: nonnegative convex monomials}
Let $n = \dim \Phi$. Suppose $\ell_1,\dots,\ell_n\in H^0(\Phi;\mathcal{A}_\Phi)$ are convex piecewise linear functions, then
\[
\int_\Phi \ell_1\cdot\ldots\cdot \ell_n \geqslant 0.
\]
\end{prop}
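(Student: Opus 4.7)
The plan is to proceed by induction on $n = \dim\Phi$, with the case $n = 0$ trivial since $H(\Phi) = \mathbb{R}$ in degree $0$ and $\int_\Phi 1 = 1$. In the inductive step, the idea is to write $\ell_n$ as a non-negative combination of Courant functions modulo a globally linear correction, and then peel off one Courant function at a time via the Gysin map, reducing each term to an integral over a fan of dimension $n-1$.

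Concretely, fix any maximal cone $\sigma_0 \in \Phi(n)$ and let $L \in \dual{V}$ be the linear extension of $\ell_n|_{\sigma_0}$. Convexity of $\ell_n$ asserts precisely that $L \leq \ell_n$ on all of $V$, so, choosing a generator $v_\rho$ of each ray $\rho \in \Phi(1)$ and normalising the Courant function so that $\phi_\rho(v_\rho) = 1$, the scalars $a_\rho := \ell_n(v_\rho) - L(v_\rho)$ are non-negative. Since $\Phi$ is simplicial, a piecewise linear function is determined by its values at the $v_\rho$, hence
\[
\ell_n \;=\; L + \sum_{\rho \in \Phi(1)} a_\rho\,\phi_\rho .
\]
Any element of $\dual{V} \subset A^+$ acts as zero in top degree on $H(\Phi) = H^0(\Phi;\mathcal{A}_\Phi)/A^+ H^0(\Phi;\mathcal{A}_\Phi)$, hence
\[
\int_\Phi \ell_1 \cdots \ell_n \;=\; \sum_{\rho \in \Phi(1)} a_\rho \int_\Phi \phi_\rho\cdot \ell_1 \cdots \ell_{n-1} .
\]
Applying Proposition \ref{prop: gysinstars} with $\tau = \rho$, we have $\phi_\rho\cdot(-) = i_* i^*(-)$ and $\int_\Phi i_*(-) = \int_{\overline{\Star}_\Phi(\rho)}(-)$, so each term on the right equals $\int_{\overline{\Star}_\Phi(\rho)} i^*\ell_1 \cdots i^*\ell_{n-1}$.

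The principal technical point, and what I expect to be the main obstacle, is to verify that $i^*\ell_j$ admits a \emph{convex} piecewise linear representative on $\overline{\Star}_\Phi(\rho)$. For each $j$, pick $L_j \in \dual{V}$ with $L_j|_\rho = \ell_j|_\rho$. Then $\ell_j - L_j$ is convex on $\Phi$ by Lemma \ref{lemma: addition of linear function preserves convexity} and vanishes on $\rho$, so its restriction to $[\Star(\rho)]$ descends through the projection $p \colon V \to V/\Span(\rho)$ to a piecewise linear function $\overline{\ell_j - L_j}$ on $\overline{\Star}_\Phi(\rho)$. Unwinding the definition of $i^*$ via the isomorphism \eqref{mapanatural}, this descended function represents $i^*\ell_j$. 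To check its convexity, take maximal cones $\bar\sigma, \bar\sigma'$ of $\overline{\Star}_\Phi(\rho)$ coming from $\sigma, \sigma' \in \Star(\rho)(n)$ and a point $\bar v$ in the relative interior of $\bar\sigma'$; lift $\bar v$ to some $v$ in the relative interior of $\sigma'$. Since $\ell_j - L_j$ vanishes on $\Span(\rho)$, the value at $\bar v$ of the linear extension of the descended function along $\bar\sigma$ equals $(\ell_j - L_j)_\sigma(v)$, and similarly for $\bar\sigma'$; the inequality required for convexity on $\overline{\Star}_\Phi(\rho)$ then follows directly from convexity of $\ell_j - L_j$ on $\Phi$ applied to the pair $(\sigma, \sigma')$.

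Granted this, the inductive hypothesis applied to the complete simplicial fan $\overline{\Star}_\Phi(\rho)$, which has dimension $n-1$, yields $\int_{\overline{\Star}_\Phi(\rho)} i^*\ell_1 \cdots i^*\ell_{n-1} \geq 0$, and since each $a_\rho \geq 0$, summing over $\rho$ gives $\int_\Phi \ell_1 \cdots \ell_n \geq 0$, completing the induction.
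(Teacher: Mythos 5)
Your argument is correct, and it is genuinely different from the one in the paper. The paper first passes to a projective subdivision (using Lemma \ref{lemma: pullback preserves convexity} and the compatibility of $\int$ with $\pi_*\pi^*$), then perturbs each $\ell_i$ by $t\ell$ for a strictly convex $\ell$ and lets $t\to 0$, and finally quotes Brion's result that the integral of a product of strictly convex piecewise linear functions is a positive multiple of a mixed volume. You instead induct on $\dim\Phi$: you use convexity only through the inequality $L\leqslant\ell_n$ for the linear extension $L$ of $\ell_n$ on a fixed maximal cone, which gives the decomposition $\ell_n = L+\sum_\rho a_\rho\phi_\rho$ with $a_\rho\geqslant 0$, and then reduce each summand to $\overline{\Star}_\Phi(\rho)$ via Proposition \ref{prop: gysinstars}. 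The key points all check out: the identity $\ell_n=L+\sum a_\rho\phi_\rho$ holds because both sides are piecewise linear and agree on the ray generators of each (simplicial, full-dimensional) maximal cone; $L$ dies in $H(\Phi)$ since it lies in $A^+$; and your verification that $i^*\ell_j$ is represented by the convex function $\overline{\ell_j-L_j}$ on $\overline{\Star}_\Phi(\rho)$ is sound, since $(\ell_j-L_j)_\sigma$ vanishes on $\Span(\rho)$ for every $\sigma\geqslant\rho$ and the convexity inequality descends along $p$. (The choice of volume form on $V/\Span(\rho)$ implicit in the Gysin map only rescales $\int_{\overline{\Star}_\Phi(\rho)}$ by a positive constant, so the inductive sign statement is unaffected.) What each approach buys: yours avoids both the existence of projective subdivisions and Brion's mixed-volume positivity, staying entirely within the Gysin formalism already set up in Section \ref{section: cohomology}, and is in that sense more self-contained and more elementary; the paper's argument is shorter given those external inputs and makes the link with classical convex geometry (Minkowski mixed volumes) explicit.
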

\begin{proof}
Consider a projective subdivision $\pi:\Psi\to \Phi$. By Lemma \ref{lemma: pullback preserves convexity} the functions $\pi^* \ell_i$, $i = 1,\ldots,n$, are convex on $\Psi$.
Since
\[
\int_\Phi \ell_1\cdot\ldots\cdot \ell_n = \int_{\Phi}\pi_*\pi^*\left(\ell_1\cdot\ldots\cdot \ell_n\right)
= \int_\Psi \pi^*\left(\ell_1\cdot\ldots\cdot \ell_n\right)
= \int_\Psi \pi^*\ell_1\cdot \ldots\cdot \pi^*\ell_n.
\]
it follows that we may assume without loss of generality that $\Phi$ is projective. Let $\ell$ denote a strictly convex piecewise linear function on $\Phi$. Observe that for any $t > 0$, the functions $\ell_i(t) := \ell_i+t\cdot \ell$, $i = 1,\ldots,n$, are strictly convex. 

The assignment  
\[
t \mapsto \int_\Phi \ell_1(t)\cdot\ldots\cdot\ell_n(t)
\]
is a polynomial function of $t$ degree $n$ and 
\[
\int_\Phi\ell_1\cdot\ldots\cdot \ell_n = \lim\limits_{t\to 0} \int_\Phi \ell_1(t)\cdot\ldots\cdot\ell_n(t).
\]
Therefore, it is sufficient to show that 
\[
\int_\Phi\ell_1\cdot\ldots\cdot\ell_n \geqslant 0
\]
if $\ell_1,\ldots,\ell_n$ are strictly convex. Under the latter assumption the integral in question is a positive multiple of the mixed volume of the corresponding polytopes by Corollary 5.3 of \cite{B}, hence non-negative.
\end{proof}

\section{Signature of complete simplicial fans}\label{section: signature}
Suppose that $\Phi$ is a complete simplicial fan.

\subsection{Signature of simplicial fans}
We denote by $\sign(\Phi)$ the signature of the non-degenerate symmetric Poincar\'e pairing
\[
H(\Phi)\otimes H(\Phi) \to \mathbb{R}\ \ \colon \alpha\otimes\beta\mapsto \int_\Phi\alpha\cdot \beta.
\]

Note that
\begin{itemize}
\item the signature of an odd-dimensional fan is equal to zero;
\item if $\dim\Phi = 2m$, then $\sign(\Phi)$ coincides with the signature of the restriction of the Poincar\'e pairing to the middle cohomology $H^{2m}(\Phi)$.
\end{itemize}

\begin{prop}\label{prop: subdivision signature}
Suppose that $\dim\Phi$ is even and $\pi \colon \Psi \to \Phi$ is a star subdivision at $\tau\in\Phi$. Then, $\sign(\Psi) = \sign(\Phi) - \sign(\overline{\Star}(\tau))$.
\end{prop}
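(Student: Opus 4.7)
The plan is to leverage the orthogonal decomposition $H(\Psi)=\im(\pi^*)\oplus\ker(\pi_*)$ from Lemma \ref{lemma: orthognal decomposition}. Since $\pi^*$ is an injective ring homomorphism and $\pi_*\pi^*=\id$ together with $\int_\Psi=\int_\Phi\circ\pi_*$ imply $\int_\Psi\pi^*\alpha\cdot\pi^*\beta=\int_\Phi\alpha\cdot\beta$, the restricted Poincar\'e pairing on $\im(\pi^*)$ is isometric to that on $H(\Phi)$, giving $\sign(\im(\pi^*))=\sign(\Phi)$. It therefore suffices to show $\sign(\ker(\pi_*))=-\sign(\overline{\Star}(\tau))$.

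The snake lemma applied to the commutative diagrams of exact sequences in subsection \ref{subsection: Decomposition and the Gysin map}, together with the identity $\pi_*\widetilde{i}_*=i_*\widetilde{\pi}_*$ and the injectivity of $\widetilde{i}_*$ and $i_*$, yields an isomorphism $\widetilde{i}_*\colon\ker(\widetilde{\pi}_*)\xrightarrow{\cong}\ker(\pi_*)\subset H(\Psi)$. The identity $\widetilde{i}_*(1)=\psi_\rho$ (a consequence of Proposition \ref{prop: gysinstars}(1) applied to $\widetilde{i}$) combined with the projection formula yields the self-intersection formula $\widetilde{i}^*\widetilde{i}_*=c\cdot{}$, where $c:=\widetilde{i}^*\psi_\rho\in H^2(\overline{\Star}_\Psi(\rho))$; hence, by Proposition \ref{prop: gysinstars}(3) applied to $\widetilde{i}$, the transported pairing on $\ker(\widetilde{\pi}_*)$ is
\[
\langle\alpha,\beta\rangle=\int_\Psi\widetilde{i}_*\alpha\cdot\widetilde{i}_*\beta=\int_{\overline{\Star}_\Psi(\rho)} c\cdot\alpha\cdot\beta.
\]

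Set $k:=\dim\tau$ and $Z:=H(\overline{\Star}_\Phi(\tau))$. The $Z$-module isomorphism $H(\overline{\Star}_\Psi(\rho))\cong Z[c]/(p(c))$ with $\deg p=k$ (discussed before Proposition \ref{exceptionaldivisor}) allows us to write every class uniquely as $\sum_{i=0}^{k-1}a_ic^i$ with $a_i\in Z$. Degree considerations together with Proposition \ref{exceptionaldivisor} force $\widetilde{\pi}_*(c^i)=0$ for $0\leqslant i\leqslant k-2$ and $\widetilde{\pi}_*(c^{k-1})=(-1)^{k-1}\lambda$ with $\lambda>0$. Consequently $\ker(\widetilde{\pi}_*)=\bigoplus_{i=0}^{k-2}Zc^i$, and
\[
\langle ac^i,bc^j\rangle=\int_{\overline{\Star}_\Phi(\tau)} ab\cdot\widetilde{\pi}_*(c^{i+j+1})
\]
vanishes whenever $i+j<k-2$.

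To extract the signature, split $\ker(\widetilde{\pi}_*)$ according to the parity of $k$. If $k$ is odd, then $\dim\overline{\Star}(\tau)=n-k$ is odd so $\sign(\overline{\Star}(\tau))=0$; the subspace $\bigoplus_{i<(k-1)/2}Zc^i$ is isotropic of exactly half of $\dim\ker(\widetilde{\pi}_*)$, hence Lagrangian, and the signature vanishes. If $k$ is even, set $m:=(k-2)/2$ and decompose $\ker(\widetilde{\pi}_*)=V_-\oplus V_0\oplus V_+$ with $V_-=\bigoplus_{i<m}Zc^i$, $V_0=Zc^m$, and $V_+=\bigoplus_{m<i\leqslant k-2}Zc^i$. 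Then $V_-$ is isotropic and $V_-\perp V_0$, while the pairing on $V_0$ equals $-\lambda$ times the Poincar\'e pairing on $Z$ (so $\sign(V_0)=-\sign(Z)$); nondegeneracy of the global pairing forces $V_-^\perp=V_-\oplus V_0$, so after orthogonalizing $V_+$ against $V_0$ using the nondegenerate form on $V_0$, the form decomposes orthogonally as $V_0\perp(V_-\oplus V_+')$ with $V_-$ Lagrangian in the second summand, yielding $\sign(\ker(\widetilde{\pi}_*))=-\sign(Z)=-\sign(\overline{\Star}(\tau))$. The main obstacle is this final signature computation, which requires combining the positivity statement of Proposition \ref{exceptionaldivisor} with the careful exhibition of the Lagrangian (odd $k$) or the orthogonal splitting that isolates $V_0$ (even $k$).
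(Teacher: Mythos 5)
Your proof is correct. It shares the paper's overall skeleton --- an orthogonal splitting of $H(\Psi)$ into a copy of $H(\Phi)$ and a complement isomorphic to $k-1$ shifted copies of $H(\overline{\Star}(\tau))$, followed by Proposition \ref{exceptionaldivisor} to determine the sign of the unique piece that can carry signature --- but the complement is handled by a genuinely different mechanism. The paper works at the sheaf level: it takes the decomposition \eqref{dir im decomp}, $\pi_*\mathcal{A}_\Psi\cong\mathcal{A}_\Phi\oplus\mathcal{M}$ with $\mathcal{M}=\bigoplus_{i=1}^{k-1}\left.\mathcal{A}_\Phi\right\vert_{\Star(\tau)}(-2i)$, asserts that this decomposition is preserved by the self-duality of $\mathcal{A}_\Psi$, and concludes that only the self-dual summand (which exists only for $k$ even) contributes to the signature. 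You instead stay entirely in cohomology: you identify $\ker(\pi_*)$ with $\bigoplus_{i=0}^{k-2}H(\overline{\Star}(\tau))\,c^i$ via $\widetilde{i}_*$, and the projection formula together with the fact that $\widetilde{\pi}_*$ lowers degree by $2(k-1)$ forces $\langle ac^i,bc^j\rangle=\int_{\overline{\Star}(\tau)}ab\cdot\widetilde{\pi}_*(c^{i+j+1})$ to vanish for $i+j<k-2$; the Lagrangian and orthogonal-complement bookkeeping then isolates $Zc^{(k-2)/2}$, whose form is $(-1)^{k-1}\lambda$ times the Poincar\'e form on $H(\overline{\Star}(\tau))$. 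What your route buys is that the duality statement the paper invokes without proof (compatibility of \eqref{dir im decomp} with self-duality and the resulting orthogonality in cohomology) is replaced by Lemma \ref{lemma: orthognal decomposition} plus an explicit degree count, making the argument more self-contained; the cost is a longer linear-algebra endgame. Both proofs ultimately rest on the same positivity input, $(-1)^{k-1}\widetilde{\pi}_*(c^{k-1})>0$.
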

\begin{proof}
Let $n := \dim \Phi = \dim \Psi$, let $k := \dim \tau$. Let $\phi_\rho \in\mathcal{A}_\Psi(\Psi)$ denote a Courant function for $\rho$ and let $c:= \widetilde{i}^*\phi_\rho \in H^2(\overline{\Star}(\rho))$. The map $\widetilde{\pi}^*\colon H(\overline{\Star}(\tau)) \to H(\overline{\Star}(\rho))$ induces an isomorphism $H(\overline{\Star}(\tau))[c]/(p(c)) \xrightarrow{\cong} H(\overline{\Star}(\rho))$, where $p(c)$ is a polynomial of degree $\dim\tau$.

The decomposition
\begin{equation}\label{dir im decomp}
\pi_*\mathcal{A}_\Psi \cong \mathcal{A}_\Phi \oplus \mathcal{M},\ \ \mathcal{M} = \bigoplus\limits_{i=1}^{k-1} \left.\mathcal{A}_\Phi\right\vert_{\Star(\tau)}(-2i).
\end{equation}
is preserved by the self-duality isomorphism induced by that of $\mathcal{A}_\Psi$. Note $\left.\mathcal{A}_\Phi\right\vert_{\Star(\tau)}(-2i)$ is self-dual if and only if $k=2i$. The decomposition \eqref{dir im decomp} induces the orthogonal (with respect to the Poincar\'e pairing) decomposition
\[
H(\Psi) \cong H(\Phi)\oplus\overline{\Gamma(\Phi;\mathcal{M})}
\]
which implies that $\sign(\Psi) = \sign(\Phi)+\sign(\overline{\Gamma(\Phi;\mathcal{M})})$.

If $k$ is odd, then
\begin{itemize}
\item  $\mathcal{M}$ has no self-dual summands and, therefore, the signature of the Poincar\'e pairing on $\overline{\Gamma(\Phi;\mathcal{M})}$ is equal to zero
\item $n-k = \dim\overline{\Star}(\tau)$ is odd, hence $\sign(\overline{\Star}(\tau)) = 0$.
\end{itemize}
Hence, if $k$ is odd, the claimed equality holds. 

Suppose that $k$ is even. Then, $\mathcal{M}$ contains a unique self-dual summand, namely $\left.\mathcal{A}_\Phi\right\vert_{\Star(\tau)}(-k)$, hence $\sign(\overline{\Gamma(\Phi;\mathcal{M})}) = \pm\sign(\overline{\Star}(\tau))$.

The part of $H(\Psi)$ which corresponds to the self-dual summand in the decomposition \eqref{dir im decomp} is the image of $c^{\frac{k}2 - 1}\cdot\widetilde{\pi}^*H(\overline{\Star}(\tau))$ under the map $\widetilde{i}_*$.

For $\alpha, \beta \in H(\overline{\Star}(\tau))$
\[
\int\limits_\Psi \widetilde{i}_*(c^{\frac{k}2 - 1}\widetilde{\pi}^*\alpha)\cdot\widetilde{i}_*(c^{\frac{k}2 - 1}\widetilde{\pi}^*\beta) =  \int\limits_\Psi \widetilde{i}_*(c^{k-1}\cdot\widetilde{\pi^*}\alpha\cdot\widetilde{\pi}^*\beta) 
 =  \int\limits_{\overline{\Star}(\rho)}\!\!\!\!\! c^{k-1}\cdot\widetilde{\pi}^*\alpha\cdot\widetilde{\pi}^*\beta 
 =  \widetilde{\pi}_*(c^{k-1})\int\limits_{\overline{\Star}(\tau)}\!\!\!\!\! \alpha\cdot\beta
\]
Since $k$ is even, it follows from Proposition \ref{exceptionaldivisor} that $\widetilde{\pi}_*(c^{k-1}) < 0$. Thus, $\sign(\overline{\Gamma(\Phi;\mathcal{M})}) = -\sign(\overline{\Star}(\tau))$ and $\sign(\Psi) = \sign(\Phi) - \sign(\overline{\Star}(\tau))$.
\end{proof}

\subsection{Signature is Euler characteristic}
For a complete fan $\Phi$ let
\[
\varepsilon(\Phi) := \sum\limits_i (-1)^i\dim H^{2i}(\Phi) .
\]
By Proposition \ref{prop: coh of forms}
\[
\varepsilon(\Phi) = \sum\limits_i (-1)^i\dim H^i(\Phi;\Omega^i_\Phi) = \sum\limits_i \chi(\Phi;\Omega^i_\Phi) .
\]

\begin{prop}\label{prop: subdivision euler}
Suppose that $\dim\Phi$ is even and $\pi \colon \Psi \to \Phi$ is a star subdivision at $\tau\in\Phi$. Then, $\varepsilon(\Psi) = \varepsilon(\Phi) - \varepsilon(\overline{\Star}(\tau))$.
\end{prop}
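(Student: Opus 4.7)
The plan is to re-use the decomposition
\[
\pi_{*}\mathcal{A}_\Psi \cong \mathcal{A}_\Phi \oplus \bigoplus_{i=1}^{k-1} \left.\mathcal{A}_\Phi\right\vert_{\Star(\tau)}(-2i)
\]
(with $k := \dim\tau$) established in the proof of Proposition \ref{prop: subdivision signature}, but now to extract from it the full graded dimensions rather than only the signature of the self-dual summand. Applying $\overline{\Gamma(\Phi;\,\cdot\,)}$ and identifying $\overline{\Gamma(\Phi;\left.\mathcal{A}_\Phi\right\vert_{\Star(\tau)})}$ with $H([\Star_\Phi(\tau)],\bdstar_\Phi(\tau))$ as in Section \ref{section: cohomology} yields the direct sum decomposition of graded vector spaces
\[
H(\Psi) \cong H(\Phi) \oplus \bigoplus_{i=1}^{k-1} H([\Star_\Phi(\tau)],\bdstar_\Phi(\tau))(-2i).
\]

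Encoding dimensions by Poincar\'e series $P_X(t) := \sum_j \dim H^{2j}(X)\,t^j$, so that $\varepsilon(X) = P_X(-1)$, I would combine \eqref{mapanatural} with the isomorphism $\phi_\tau \cdot \colon H([\Star_\Phi(\tau)]) \xrightarrow{\cong} H([\Star_\Phi(\tau)],\bdstar_\Phi(\tau))(2k)$ to obtain
\[
P_{H([\Star_\Phi(\tau)],\bdstar_\Phi(\tau))}(t) = t^k\,P_{H(\overline{\Star}_\Phi(\tau))}(t).
\]
Substituting into the decomposition above and evaluating at $t=-1$ reduces the whole statement to the identity
\[
\varepsilon(\Psi) - \varepsilon(\Phi) \;=\; (-1)^k\left(\sum_{i=1}^{k-1}(-1)^i\right)\varepsilon(\overline{\Star}_\Phi(\tau)).
\]

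To close the argument I would split on the parity of $k$. For $k$ even the prefactor equals $-1$ and the claim is immediate. For $k$ odd the prefactor vanishes, but then $n-k = \dim\overline{\Star}_\Phi(\tau)$ is odd (since $n$ is even by hypothesis), and Poincar\'e duality on the complete simplicial fan $\overline{\Star}_\Phi(\tau)$ forces $\varepsilon(\overline{\Star}_\Phi(\tau)) = (-1)^{n-k}\varepsilon(\overline{\Star}_\Phi(\tau)) = 0$; thus both sides of the target equation equal $\varepsilon(\Phi)$. I do not anticipate a genuine obstacle, since the structural content---the direct sum decomposition and the grading-shifting isomorphism provided by multiplication by $\phi_\tau$---is already in place; the only novelty compared to Proposition \ref{prop: subdivision signature} is to replace the signature bookkeeping on self-dual summands by alternating-sum bookkeeping on the full Poincar\'e series, and the parity dichotomy absorbs the resulting asymmetry.
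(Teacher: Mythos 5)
Your argument follows the same route as the paper: apply $\overline{\Gamma(\Phi;\,\cdot\,)}$ to the decomposition \eqref{dir im decomp}, take alternating sums of graded dimensions, and split on the parity of $k$. The final conclusion is correct, and your treatment of the odd case (the prefactor vanishes, while $\varepsilon(\overline{\Star}_\Phi(\tau))=0$ because $\dim\overline{\Star}_\Phi(\tau)=n-k$ is odd) matches the paper's.

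However, one intermediate identification is wrong. The summand $\overline{\Gamma\bigl(\Phi;\left.\mathcal{A}_\Phi\right\vert_{\Star(\tau)}\bigr)}$ is $H([\Star_\Phi(\tau)])\cong H(\overline{\Star}_\Phi(\tau))$, \emph{not} $H([\Star_\Phi(\tau)],\bdstar_\Phi(\tau))$: the sheaf in \eqref{dir im decomp} is the pushforward of the restriction of $\mathcal{A}_\Phi$ to the closed subset $\Star(\tau)$ (whose sections are all of $\mathcal{A}_\Phi([\Star_\Phi(\tau)])$), not the extension by zero $\Gamma_{\Star_\Phi(\tau)}\mathcal{A}_\Phi$ of sections vanishing on the boundary. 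Consequently your Poincar\'e-series identity acquires a spurious factor of $t^k$; the correct statement is
\[
P_{H(\Psi)}(t) \;=\; P_{H(\Phi)}(t) \;+\; \Bigl(\sum_{i=1}^{k-1}t^{\,i}\Bigr)\,P_{H(\overline{\Star}_\Phi(\tau))}(t),
\]
with no $t^k$ in front of the sum. You can see your version cannot hold: it would place classes of the $i$-th summand in degrees up to $2(n-k)+2i+2k$, exceeding the top degree $2n$ of $H(\Psi)$ (concretely, for the star subdivision of $\mathbb{P}^2$ at a maximal cone the extra class sits in degree $2$, whereas your formula predicts degree $6$). This is also consistent with the paper's description of the summands as $\widetilde{i}_*\bigl(c^{\,i-1}\widetilde{\pi}^*H(\overline{\Star}(\tau))\bigr)$, which shifts degrees by $2i$, not $2i+2k$. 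The slip happens to be invisible at $t=-1$: the extra factor $(-1)^k$ equals $1$ when $k$ is even and multiplies a vanishing sum when $k$ is odd, so your final formula for $\varepsilon(\Psi)-\varepsilon(\Phi)$ agrees with the correct one in both cases. Still, the intermediate identity as you state it is false and should be corrected.
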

\begin{proof}
Let $n := \dim \Phi = \dim \Psi$, let $k := \dim \tau$. The decomposition \eqref{dir im decomp} gives rise to the isomorphism
\[
H(\Psi) \cong H(\Phi) \oplus \bigoplus\limits_{i=1}^{k-1} H(\overline{\Star}(\tau))(-2i).
\]
Therefore,
\[
\varepsilon(\Psi) = \varepsilon(\Phi) + \sum\limits_{i=1}^{k-1} (-1)^i \varepsilon(\overline{\Star}(\tau)).
\]
If $k$ is odd, then 
\begin{itemize}
\item $\sum\limits_{i=1}^{k-1} (-1)^i \varepsilon(\overline{\Star}(\tau)) = 0$
\item $n-k = \dim\overline{\Star}(\tau)$ is odd, hence $\varepsilon(\overline{\Star}(\tau)) = 0$.
\end{itemize}
If $k$ is even, then $\sum\limits_{i=1}^{k-1} (-1)^i \varepsilon(\overline{\Star}(\tau)) = -\varepsilon(\overline{\Star}(\tau))$. Hence, in either case the claimed equality holds.
\end{proof}

\begin{thm}\label{thm: signature is euler characteristic}
For any complete simplicial fan $\Phi$
\[
\sign(\Phi) = \varepsilon(\Phi) .
\]
\end{thm}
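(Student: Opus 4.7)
The plan is to induct on $n = \dim\Phi$, using Propositions \ref{prop: subdivision signature} and \ref{prop: subdivision euler} to show that $\sign(\Phi) - \varepsilon(\Phi)$ is invariant under star subdivisions, and then to reduce to a direct computation for the standard fan $\mathbb{P}^n$.

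The base cases are handled as follows. When $n$ is odd, $\sign(\Phi) = 0$ by convention; and $\varepsilon(\Phi) = 0$ because Poincar\'e duality gives $\dim H^{2i}(\Phi) = \dim H^{2(n-i)}(\Phi)$, so pairing the index $i$ with $n - i$ (always distinct when $n$ is odd) cancels the terms in the defining alternating sum in pairs of opposite sign. When $n = 0$ the only complete fan is $\{\oo\}$, $H(\Phi) = \mathbb{R}$, and both invariants equal $1$.

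For the inductive step with $n \geq 2$ even, given a star subdivision $\pi\colon\Psi\to\Phi'$ at $\tau\in\Phi'$ between complete simplicial fans of dimension $n$, Propositions \ref{prop: subdivision signature} and \ref{prop: subdivision euler} combine to give
\[
(\sign(\Psi) - \varepsilon(\Psi)) - (\sign(\Phi') - \varepsilon(\Phi')) = -\bigl(\sign(\overline{\Star}(\tau)) - \varepsilon(\overline{\Star}(\tau))\bigr) = 0,
\]
where the last equality is the inductive hypothesis applied to $\overline{\Star}(\tau)$, a complete simplicial fan of dimension $n - \dim\tau < n$. Hence $\sign - \varepsilon$ is invariant along any star-subdivision sequence through complete simplicial fans of dimension $n$. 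By Theorem \ref{thm: subdivision theorem} the fan $\Phi$ (viewed in $\mathbb{R}^n$) is related to $\mathbb{P}^n$ by such a sequence, so $\sign(\Phi) - \varepsilon(\Phi) = \sign(\mathbb{P}^n) - \varepsilon(\mathbb{P}^n)$.

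Finally, a direct computation for $\mathbb{P}^n$ with $n$ even closes the argument: $H(\mathbb{P}^n) \cong \mathbb{R}[h]/(h^{n+1})$ with $h$ in degree $2$, giving $\dim H^{2i}(\mathbb{P}^n) = 1$ for $0 \le i \le n$ and $\varepsilon(\mathbb{P}^n) = 1$; and the one-dimensional middle cohomology $\mathbb{R}\cdot h^{n/2}$ pairs to a positive multiple of the fundamental class (via $\int_{\mathbb{P}^n} h^n > 0$), so $\sign(\mathbb{P}^n) = 1$. The main technical point to pin down is that every intermediate fan in the star-subdivision sequence can be taken simplicial, since a star subdivision of a non-simplicial fan may be simplicial without its source being so; this is handled most cleanly by routing through a common simplicial refinement of $\Phi$ and $\mathbb{P}^n$, reached from each by successive star subdivisions (which preserve simpliciality by Remark \ref{remark: simplicial star subdivision}).
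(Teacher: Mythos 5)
Your argument is correct and is essentially the paper's own proof: the same induction on dimension, the same use of Propositions \ref{prop: subdivision signature} and \ref{prop: subdivision euler} to show that a star subdivision preserves the equality $\sign = \varepsilon$ (via the inductive hypothesis applied to $\overline{\Star}(\tau)$), and the same reduction to $\mathbb{P}^n$ via Theorem \ref{thm: subdivision theorem}; you merely supply the $\mathbb{P}^n$ base computation that the paper leaves to the reader. One caveat: your proposed fix for keeping all intermediate fans simplicial --- a common refinement of $\Phi$ and $\mathbb{P}^n$ reached from each by successive star subdivisions --- is a strong-factorization statement, stronger than the weak (zigzag) factorization that Theorem \ref{thm: subdivision theorem} actually provides, though the paper itself glosses over this same point.
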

\begin{proof}
If $\dim\Phi$ is odd, then $\sign(\Phi) = \varepsilon(\Phi) = 0$. Hence we shall assume that $\dim\Phi$ is even.

We leave it to the reader to verify the claimed equality for $\Phi = \mathbb{P}^n$ and all $n$. We proceed by induction on the dimension of the fan.

Suppose that $\Phi$ is a complete simplicial fan of dimension $n$. By Theorem \ref{thm: subdivision theorem} there exists a diagram of star subdivisions
\begin{equation*}
\begin{tikzcd}
 & \Psi_1 \arrow[ld] \arrow[rd] & & \dots \arrow[ld] \arrow[rd] & & \Psi_N \arrow[ld] \arrow[rd] & \\
\Phi_0 & & \Phi_1 & & \Phi_{N-1} & & \Phi_N
\end{tikzcd}
\end{equation*}
with $\Phi_0 = \mathbb{P}^n$ and $\Phi_N = \Phi$. Thus, it suffices to prove the following statement.

\vskip 3mm\noindent\emph{Claim:}
Suppose that $\pi \colon \Psi \to \Phi$ is a star subdivision at $\tau\in\Phi$ along a ray $\rho\in\Psi$. Then, $\sign(\Phi) = \varepsilon(\Phi)$ if and only if $\sign(\Psi) = \varepsilon(\Psi)$.
\vskip 3mm\noindent\emph{Proof:}
By induction on dimension $\sign(\overline{\Star}(\tau)) = \varepsilon(\overline{\Star}(\tau))$. The claim follows from Proposition \ref{prop: subdivision euler} and Proposition \ref{prop: subdivision signature}.
\end{proof} 

\section{A Riemann-Roch type theorem}\label{section: riemann-roch}
Suppose that $\Phi$ is a complete \emph{unimodular} (hence, in particular, rational) simplicial fan in $V$.

\subsection{The sheaf $\widehat{\mathcal{A}}_\Phi$}
Let $\widehat{A} = \widehat{A}_V := \prod\limits_{i = 0}^\infty A^{2i}$, $\widehat{A}^+ := \ker(\widehat{A} \to A^0)$.
Let $\widehat{\mathcal{A}}_\Phi := \prod\limits_{i = 0}^\infty\mathcal{A}^{2i}_\Phi$.
\begin{lemma}
The map
\[
H(\Phi) = \overline{H^0(\Phi;\mathcal{A}_\Phi)}\to H^0(\Phi;\widehat{\mathcal{A}}_\Phi)/\widehat{A}^+H^0(\Phi;\widehat{\mathcal{A}}_\Phi)
\]
induced by the canonical map $\mathcal{A}_\Phi \to \widehat{\mathcal{A}}_\Phi$ is an isomorphism.
\end{lemma}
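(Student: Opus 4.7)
The plan is to reduce to the observation that for a finitely generated free graded $A$-module, passing to the graded completion does not change the quotient by the maximal graded ideal.

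First I would identify the global sections of $\widehat{\mathcal{A}}_\Phi$ with the graded completion of $M := H^0(\Phi;\mathcal{A}_\Phi)$. Since $\widehat{\mathcal{A}}_\Phi = \prod_{i\geq 0} \mathcal{A}^{2i}_\Phi$ as a sheaf on $\Phi$ and $\Gamma(\Phi;-)$ commutes with products (being a right adjoint), one has
\[
\widehat{M} := H^0(\Phi;\widehat{\mathcal{A}}_\Phi) = \prod_{i\geq 0} H^0(\Phi;\mathcal{A}^{2i}_\Phi) = \prod_{i\geq 0} M^{2i},
\]
so that $\widehat{M}$ is precisely the graded completion of $M = \bigoplus_i M^{2i}$, and the map of the lemma is the one induced by the inclusion $\bigoplus_i M^{2i} \hookrightarrow \prod_i M^{2i}$.

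Next I would invoke quasi-convexity (Section \ref{subsection: quasi-conves fans}): since $\Phi$ is complete, it is quasi-convex, hence $M$ is a free graded $A$-module. To see that the rank is finite, I would note that $M$ is a submodule of $\prod_{\sigma\in\Phi}\mathcal{A}_{\Phi,\sigma}$, a finitely generated $A$-module (finite product of finitely generated $A$-modules, as $\Phi$ is a finite set), so $M$ itself is finitely generated over the Noetherian ring $A$. Fixing a homogeneous basis yields an isomorphism of graded $A$-modules $M \cong \bigoplus_{j=1}^N A(-d_j)$ with $N<\infty$.

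Since the direct sum is finite, completion commutes with it and gives $\widehat{M} \cong \bigoplus_{j=1}^N \widehat{A}(-d_j)$ as graded $\widehat{A}$-modules. On each summand the canonical maps
\[
A(-d_j)/A^+A(-d_j) \xrightarrow{\cong} \mathbb{R}(-d_j) \xleftarrow{\cong} \widehat{A}(-d_j)/\widehat{A}^+\widehat{A}(-d_j)
\]
are isomorphisms; assembling them produces the claimed isomorphism $M/A^+M \xrightarrow{\cong} \widehat{M}/\widehat{A}^+\widehat{M}$. The only step that is not purely formal is the appeal to finite rank: for a free graded $A$-module of infinite rank, completion does not commute with the direct sum and the two quotients would differ. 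Thus the essential input is the finite generation of $M$, which in turn rests on the combinatorial finiteness of $\Phi$.
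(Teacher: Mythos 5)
Your argument is correct and follows essentially the same route as the paper, which likewise reduces the statement to the two facts that $\Gamma(\Phi;\mathcal{A}_\Phi)$ is a free $A$-module of finite rank (by completeness, hence quasi-convexity) and that the reduction of $A\to\widehat{A}$ modulo the augmentation ideal is an isomorphism. Your write-up merely supplies the details the paper leaves implicit (identification of $H^0(\Phi;\widehat{\mathcal{A}}_\Phi)$ with the graded completion via commutation of $\Gamma$ with products, and finite generation of $M$ via Noetherianity), and correctly isolates finite rank as the essential hypothesis.
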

\begin{proof}
Since $\Phi$ is complete and simplicial, then $\Gamma(\Phi;\mathcal{A}_\Phi)$ is a free $A$-module of finite rank. Since the reduction modulo $A^+$ of the canonical map $A\to\hat{A}$ is an isomorphism, it then follows that the map
\[
H(\Phi) \to H^0(\Phi;\widehat{\mathcal{A}}_\Phi)/\widehat{A}^+H^0(\Phi;\widehat{\mathcal{A}}_\Phi)
\]
is also an isomorphism.
\end{proof}

\subsection{The Chern character}\label{subsection: chern character}
For $\rho\in\Phi(1)$ let $v_\rho\in\rho$ denote the primitive vector. Recall that the corresponding Courant function is denoted $\phi_\rho$.

According to Proposition \ref{prop: bases for K}, the collection of sheaves $\mathcal{O}$, $\mathcal{O}(\sigma)$, $\sigma\in\Phi$ (\eqref{line bundles associated to cones}, \eqref{line bundles associated to rays}, form a basis for $K(\Phi)$. Let
\[
\widetilde{\ch} \colon K(\Phi) \to H^0(\Phi;\widehat{\mathcal{A}}_\Phi)
\]
denote the unique homomorphism such that
\begin{itemize}
\item $\widetilde{\ch}(\mathcal{O}) = 1$,
\item $\widetilde{\ch}(\mathcal{O}(\rho)) = \exp(-\phi_\rho)$ if $\rho\in\Phi(1)$,
\item $\widetilde{\ch}(\mathcal{O}(\sigma)) = \prod_{\rho\in[\sigma](1)}\widetilde{\ch}(\mathcal{O}(\rho))$.
\end{itemize}
Let
\[
\ch \colon K(\Phi) \to H(\Phi)
\]
denote the composition
\[
K(\Phi) \xrightarrow{\widetilde{\ch}} H^0(\Phi;\widehat{\mathcal{A}}_\Phi) \to H^0(\Phi;\widehat{\mathcal{A}}_\Phi)/\widehat{A}^+H^0(\Phi;\widehat{\mathcal{A}}_\Phi) \cong H(\Phi) .
\]

\subsection{Multiplicativity properties of the Chern character}
\begin{lemma}\label{lemma: mult projective space}
Let $\sigma\in\Phi$ and let $\xi\in\Phi(1)$ be a ray such that
\begin{itemize}
\item $\xi+\sigma\notin\Phi$,
\item for any $\tau\in\partial\sigma$, $\xi+\tau\in\Phi$.
\end{itemize}
 Then,
\[
\widetilde{\ch}\left([\mathcal{O}(\xi)]\cdot[\mathcal{O}(\sigma)]\right) = \widetilde{\ch}\left([\mathcal{O}(\xi)]\right)\cdot\widetilde{\ch}\left([\mathcal{O}(\sigma)]\right) .
\]
\end{lemma}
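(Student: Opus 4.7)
My approach is to compute $[\mathcal{O}(\xi)]\cdot[\mathcal{O}(\sigma)]$ in $K(\Phi)$ by expanding in the injective classes $[\mathbb{R}_{\Star(\tau)}]$, then apply $\widetilde{\ch}$ termwise and match against the right-hand side. Set $S:=\{\xi\}\cup[\sigma](1)$ and, for $T\subseteq S$, let $\tau_T:=\sum_{\rho\in T}\rho$. Since $\mathcal{O}(\sigma)=\bigotimes_{\rho\in[\sigma](1)}\mathcal{O}(\rho)$ by definition and $[\mathcal{O}(\rho)]=1-[\mathbb{R}_{\Star(\rho)}]$, one has $[\mathcal{O}(\xi)]\cdot[\mathcal{O}(\sigma)]=\prod_{\rho\in S}(1-[\mathbb{R}_{\Star(\rho)}])$. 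Expanding and using the relation $[\mathbb{R}_{\Star(\rho_1)}]\cdot[\mathbb{R}_{\Star(\rho_2)}]=[\mathbb{R}_{\Star(\rho_1)\cap\Star(\rho_2)}]$ together with simpliciality of $\Phi$ (so that $\bigcap_{\rho\in T}\Star(\rho)$ equals $\Star(\tau_T)$ when $\tau_T\in\Phi$ and is empty otherwise), I obtain
\[
[\mathcal{O}(\xi)]\cdot[\mathcal{O}(\sigma)] \;=\; \sum_{T\subseteq S}(-1)^{|T|}[\mathbb{R}_{\Star(\tau_T)}],
\]
where terms with $\tau_T\notin\Phi$ are interpreted as zero. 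The two hypotheses are precisely what ensures $\tau_T\in\Phi$ for every proper $T\subsetneq S$: if $\xi\notin T$ then $\tau_T\leqslant\sigma$; if $\xi\in T$ and $T\neq S$, write $T=\{\xi\}\cup T'$, so $\tau_{T'}\in\partial\sigma$ and $\xi+\tau_{T'}\in\Phi$ by assumption. Only the top term $T=S$ drops out, since $\tau_S=\xi+\sigma\notin\Phi$.

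Next I apply $\widetilde{\ch}$. Combining Lemma \ref{lemma: line bundles vs injectives} with the defining formula $\widetilde{\ch}(\mathcal{O}(\nu))=\prod_{\rho\in[\nu](1)}e^{-\phi_\rho}$ yields $\widetilde{\ch}([\mathbb{R}_{\Star(\tau)}])=\prod_{\rho\in[\tau](1)}(1-e^{-\phi_\rho})$, so, writing $y_\rho:=1-e^{-\phi_\rho}$, the left-hand side of the lemma becomes $\sum_{T\subsetneq S}(-1)^{|T|}\prod_{\rho\in T}y_\rho$. On the other hand, $\widetilde{\ch}(\mathcal{O}(\xi))\cdot\widetilde{\ch}(\mathcal{O}(\sigma))=\prod_{\rho\in S}(1-y_\rho)=\sum_{T\subseteq S}(-1)^{|T|}\prod_{\rho\in T}y_\rho$. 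The two expressions agree except for the single $T=S$ term $(-1)^{|S|}\prod_{\rho\in S}y_\rho$.

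The crux — and really the only non-bookkeeping step — is to show that $\prod_{\rho\in S}y_\rho=0$ in $H^0(\Phi;\widehat{\mathcal{A}}_\Phi)$. At a stalk $\eta\in\Phi$, the factor $(1-e^{-\phi_\rho})_\eta$ vanishes precisely when $\phi_\rho|_\eta=0$, i.e. when $\rho\not\leqslant\eta$. Working in the integral domain $\widehat{A}_{\Span(\eta)}$, a product of such factors is nonzero at $\eta$ only if every ray in $S$ is a face of $\eta$, which by simpliciality forces $\xi+\sigma\leqslant\eta$ — contradicting $\xi+\sigma\notin\Phi$. This translation of the hypothesis $\xi+\sigma\notin\Phi$ into the vanishing of a product of Chern-character classes is the main point of the lemma.
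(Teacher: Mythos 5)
Your proof is correct and follows essentially the same route as the paper: both arguments reduce the identity to the combinatorial fact that the hypotheses make $\langle\{\xi\}\cup[\sigma](1)\rangle$ the boundary of a simplex, so that exactly the top term drops out of the inclusion--exclusion over subsets of $S$, and both ultimately rest on the vanishing forced by $\xi+\sigma\notin\Phi$ of a product of Courant functions with disjoint supports. Your packaging of the discrepancy as the single product $(-1)^{|S|}\prod_{\rho\in S}\bigl(1-\exp(-\phi_\rho)\bigr)$, obtained by expanding in the injective basis $[\mathbb{R}_{\Star(\tau)}]$ rather than in the $[\mathcal{O}(\tau)]$ basis, is a slightly tidier way to organize the paper's degree-by-degree verification (reduced Euler characteristic of the simplex in degree zero, divisibility by $\prod_{\rho\in S}\phi_\rho$ in positive degrees), but the underlying computation is the same.
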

\begin{proof}
Let $n:= \dim\sigma$, let $S := \{\xi\}\cup[\sigma](1)$. Note that the hypotheses mean that the subfan $\langle S\rangle$ is combinatorialy equivalent to $\mathbb{P}^n$. For a non-empty, proper subset $I\subset S$ let $\rho_I := \sum\limits_{\rho\in I} \rho$; let $\rho_\varnothing := \oo$. Then, $\langle S\rangle = \{ \rho_I \mid I\subsetneq S\}$. Thus (see Example \ref{example: projective space}),
\[
[\mathcal{O}(\xi)]\cdot[\mathcal{O}(\sigma)] = \prod_{\rho\in S} [\mathcal{O}(\rho)] = \sum_{I\subsetneq S} (-1)^{n-|I|}[\mathcal{O}(\rho_I)] .
\]
For a non-empty subset $I\subset S$ let $\phi_I = \sum\limits_{\rho\in I} \phi_\rho$; let $\phi_\varnothing = 0$.
It is therefore sufficient to show  that
\[
\sum_{I\subseteq S} (-1)^{n-|I|}\exp(-\phi_I) = 0
\]
in $H^0(\Phi;\widehat{\mathcal{A}}_\Phi)$.

The homogeneous component of $\sum\limits_{I\subseteq S} (-1)^{n-|I|}\exp(-\phi_I)$ of degree zero is the reduced Euler characteristic of the $n$-dimensional simplex, hence equal to zero.

Let $k > 0$. The homogeneous component of $\sum\limits_{I\subseteq S} (-1)^{n-|I|}\exp(-\phi_I)$ of degree $k$ is $(-1)^k\sum\limits_{I\subseteq S} (-1)^{|I|}\phi_I^k$. Since this sum is divisible by $\prod\limits_{\rho\in S}\phi_\rho$ and the latter monomial is equal to zero in $H^0(\Phi;\widehat{\mathcal{A}}_\Phi)$, it follows that $(-1)^k\sum\limits_{I\subseteq S} (-1)^{|I|}\phi_I^k = 0$ in $H^0(\Phi;\widehat{\mathcal{A}}_\Phi)$.
\end{proof}

\begin{prop}\label{prop: multiplicativity of ch}
For $\varnothing \neq S \subset \Phi(1)$, a non-empty set of rays,
\[
\widetilde{\ch}\left(\prod_{\rho\in S} [\mathcal{O}(\rho)]\right) = \prod_{\rho\in S}\widetilde{\ch}([\mathcal{O}(\rho)]) .
\]
\end{prop}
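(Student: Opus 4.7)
The plan is to verify the identity by a direct computation in $H^0(\Phi;\widehat{\mathcal{A}}_\Phi)$, expressing both sides in a common ``basis'' indexed by the cones of $\langle S\rangle$. For $\tau\in\Phi$ introduce
\[
P(\tau):=\prod_{\rho\in[\tau](1)}(\exp(-\phi_\rho)-1)\in H^0(\Phi;\widehat{\mathcal{A}}_\Phi),
\]
with the convention $P(\oo)=1$. The starting observation is that for any $T\subseteq\Phi(1)$,
\[
\prod_{\rho\in T}\exp(-\phi_\rho)=\sum_{T'\subseteq T}\prod_{\rho\in T'}(\exp(-\phi_\rho)-1)=\sum_{\tau\in\langle T\rangle}P(\tau).
\]
The second equality uses a Stanley--Reisner-type vanishing in $H^0(\Phi;\widehat{\mathcal{A}}_\Phi)$: each summand of $\prod_{\rho\in T'}(\exp(-\phi_\rho)-1)$ is a monomial $\prod_\rho\phi_\rho^{a_\rho}$ with $\mathrm{supp}(a)=T'$, and such a monomial vanishes unless $T'$ is the set of rays of a cone of $\Phi$, i.e.\ unless $T'=[\tau](1)$ for a (necessarily unique, since $\Phi$ is simplicial) $\tau\in\langle T\rangle$.

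Applied with $T=S$ this gives $\prod_{\rho\in S}\widetilde{\ch}([\mathcal{O}(\rho)])=\sum_{\eta\in\langle S\rangle}P(\eta)$; applied with $T=[\tau](1)$ for each $\tau\in\langle S\rangle$ it gives $\widetilde{\ch}([\mathcal{O}(\tau)])=\sum_{\eta\leqslant\tau}P(\eta)$. Inserting the latter into Proposition~\ref{prop: product formula} and re-indexing the resulting double sum,
\[
\widetilde{\ch}\Bigl(\prod_{\rho\in S}[\mathcal{O}(\rho)]\Bigr)=\sum_{\tau\in\langle S\rangle}e(\tau)\,\widetilde{\ch}([\mathcal{O}(\tau)])=\sum_{\eta\in\langle S\rangle}P(\eta)\cdot\Bigl(\sum_{\tau\in\langle S\rangle,\,\tau\geqslant\eta}e(\tau)\Bigr),
\]
where, as in the derivation of Proposition~\ref{prop: product formula}, the coefficient $e(\tau)=(-1)^{\dim\overline{\Star}_{\langle S\rangle}(\tau)}\chi(\overline{\Star}_{\langle S\rangle}(\tau);\mathbb{R})$ equals $\sum_{\sigma\in\langle S\rangle,\,\sigma\geqslant\tau}(-1)^{d(\sigma)-d(\tau)}$. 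Comparing with the previous display, the proposition reduces to the combinatorial identity
\[
\sum_{\tau\in\langle S\rangle,\,\tau\geqslant\eta}e(\tau)=1\qquad\text{for every }\eta\in\langle S\rangle.
\]

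To prove the identity, swap the order of summation and fix $\sigma\geqslant\eta$; since $\Phi$ is simplicial the faces $\tau$ with $\eta\leqslant\tau\leqslant\sigma$ are parametrized by subsets $J\subseteq[\sigma](1)\setminus[\eta](1)$ with $d(\tau)=d(\eta)+|J|$, so the inner sum $\sum_{\tau:\,\eta\leqslant\tau\leqslant\sigma}(-1)^{d(\sigma)-d(\tau)}$ equals $(-1)^{d(\sigma)-d(\eta)}(1-1)^{d(\sigma)-d(\eta)}$, which vanishes unless $\sigma=\eta$ and equals $1$ in that case. The outer sum therefore collapses to $1$. The main delicate point is the Stanley--Reisner reduction to the family $\{P(\tau)\}_{\tau\in\Phi}$, which uses simpliciality of $\Phi$ in an essential way; once this is in place, everything else is M\"obius-style bookkeeping, uniform in the finer combinatorics of $\langle S\rangle$.
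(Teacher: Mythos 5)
Your proof is correct, but it follows a genuinely different route from the paper's. The paper proves the proposition by a double induction (on $|S|$, and within that on $\dim\sigma$), whose engine is Lemma \ref{lemma: mult projective space}: the case of a minimal non-face, where $\langle S\rangle$ is combinatorially a $\mathbb{P}^n$ and the identity reduces to the vanishing of the reduced Euler characteristic of a simplex together with the Stanley--Reisner relation $\prod_{\rho\in S}\phi_\rho=0$. You instead compute both sides in closed form: expanding in the family $P(\tau)=\prod_{\rho\in[\tau](1)}(\exp(-\phi_\rho)-1)$, you show that each side equals $\sum_{\eta\in\langle S\rangle}P(\eta)$ -- the right-hand side by the binomial expansion plus the same Stanley--Reisner vanishing, the left-hand side by feeding Proposition \ref{prop: product formula} (in its raw form $e(\tau)=\sum_{\sigma\geqslant\tau}(-1)^{d(\sigma)-d(\tau)}$, which is exactly what the paper's derivation produces before reinterpreting it as an Euler characteristic) into the identity $\widetilde{\ch}([\mathcal{O}(\tau)])=\sum_{\eta\leqslant\tau}P(\eta)$ and collapsing the double sum by the standard $(1-1)^{d(\sigma)-d(\eta)}$ cancellation. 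Both arguments ultimately rest on the same two inputs -- simpliciality (so that subsets of $[\sigma](1)$ biject with faces) and the vanishing of non-face monomials in $H^0(\Phi;\widehat{\mathcal{A}}_\Phi)$ -- but your version eliminates the induction entirely and yields the explicit formula $\widetilde{\ch}\bigl(\prod_{\rho\in S}[\mathcal{O}(\rho)]\bigr)=\sum_{\eta\in\langle S\rangle}P(\eta)$ as a byproduct, which is arguably more transparent; the paper's induction, on the other hand, isolates the geometrically meaningful special case of a minimal non-face and avoids the M\"obius-style bookkeeping.
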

\begin{proof}
We proceed by induction on $|S|$. If $|S| = 1$ there is nothing to prove. From now on we assume that $|S| > 1$.

Let $\xi\in S$ and let $S' := S\smallsetminus\{\xi\}$; note that $S' \neq \varnothing$ and $|S'| < |S|$. Hence, by the induction hypothesis,
\[
\widetilde{\ch}\left(\prod_{\rho\in S'} [\mathcal{O}(\rho)]\right) = \prod_{\rho\in S'}\widetilde{\ch}([\mathcal{O}(\rho)]) .
\]
By the product formula \eqref{product formula} there exist $a_\sigma \in \mathbb{Z}$ such that
\[
\prod_{\rho\in S'} [\mathcal{O}(\rho)] = \sum_{\sigma\in\langle S'\rangle} a_\sigma [\mathcal{O}(\sigma)] .
\]
Therefore,
\[
\prod_{\rho\in S} [\mathcal{O}(\rho)] = [\mathcal{O}(\xi)]\cdot\prod_{\rho\in S'} [\mathcal{O}(\rho)] = \sum_{\sigma\in\langle S'\rangle} a_\sigma [\mathcal{O}(\xi)]\cdot[\mathcal{O}(\sigma)]
\]

We claim that 
\[
\widetilde{\ch}\left([\mathcal{O}(\xi)]\cdot[\mathcal{O}(\sigma)]\right) = \widetilde{\ch}\left([\mathcal{O}(\xi)]\right)\cdot\widetilde{\ch}\left([\mathcal{O}(\sigma)]\right) .
\]
If $\xi+\sigma\in\Phi$, the claim follows from the definition of the Chern character, so we assume that $\xi+\sigma\notin\Phi$. Let $\tau\in[\sigma]$ be a minimal cone such that $\xi+\tau\notin\Phi$. 

Note that $\tau\neq\oo$ and $\xi+\mu\in\Phi$ for any $\mu\in\partial\tau$ and it follows from Lemma \ref{lemma: mult projective space} that
\[
\widetilde{\ch}\left([\mathcal{O}(\xi)]\cdot[\mathcal{O}(\tau)]\right) = \widetilde{\ch}\left([\mathcal{O}(\xi)]\right)\cdot\widetilde{\ch}\left([\mathcal{O}(\tau)]\right) .
\]

If $\tau = \sigma$ the claim is proven. From now on we assume that $\oo\neq\tau \neq \sigma$ and proceed by induction on the dimension of $\sigma$. Let $\tau'\in[\sigma]$ denote the unique cone such that $\tau\cap\tau' = \oo$ and $\tau + \tau' = \sigma$. By the product formula \eqref{product formula} there exist $b_\mu\in\mathbb{Z}$ such that
\[
[\mathcal{O}(\xi)]\cdot[\mathcal{O}(\sigma)] = [\mathcal{O}(\xi)]\cdot[\mathcal{O}(\tau)]\cdot[\mathcal{O}(\tau')] = \sum_{\mu\in\langle\{\xi\}\cup[\tau](1)\rangle} b_\mu\cdot[\mathcal{O}(\mu)]\cdot[\mathcal{O}(\tau')] .
\]
Since $[\mathcal{O}(\mu)]\cdot[\mathcal{O}(\tau')] = \prod_{\rho\in[\mu](1)\cup[\tau'](1)}[\mathcal{O}(\rho)]$ and $|\rho\in[\mu](1)\cup[\tau'](1)| < |S|$ the induction hypothesis implies that
\begin{eqnarray*}
\widetilde{\ch}\left([\mathcal{O}(\mu)]\cdot[\mathcal{O}(\tau')]\right) & = & \widetilde{\ch}\left([\mathcal{O}(\mu)]\right)\cdot\widetilde{\ch}\left([\mathcal{O}(\tau')]\right) , \\
\widetilde{\ch}\left([\mathcal{O}(\xi)]\cdot[\mathcal{O}(\tau)]\right) & = & \widetilde{\ch}\left([\mathcal{O}(\xi)]\right)\widetilde{\ch}\left([\mathcal{O}(\tau)]\right) .
\end{eqnarray*}
Therefore,
\begin{eqnarray*}
\widetilde{\ch}\left([\mathcal{O}(\xi)]\cdot[\mathcal{O}(\sigma)]\right) & = & \sum_{\mu\in\langle\{\xi\}\cup[\tau](1)\rangle} b_\mu\cdot\widetilde{\ch}\left([\mathcal{O}(\mu)]\cdot[\mathcal{O}(\tau')]\right) \\
& = & \sum_{\mu\in\langle\{\xi\}\cup[\tau](1)\rangle} b_\mu\cdot\widetilde{\ch}\left([\mathcal{O}(\mu)]\right)\cdot\widetilde{\ch}\left([\mathcal{O}(\tau')]\right) \\
& = & \widetilde{\ch}\left(\sum_{\mu\in\langle\{\xi\}\cup[\tau](1)\rangle} b_\mu\cdot[\mathcal{O}(\mu)]\right)\cdot\widetilde{\ch}\left([\mathcal{O}(\tau')]\right) \\
& = & \widetilde{\ch}\left([\mathcal{O}(\xi)]\cdot[\mathcal{O}(\tau)]\right)\cdot\widetilde{\ch}\left([\mathcal{O}(\tau')]\right) \\
& = & \widetilde{\ch}\left([\mathcal{O}(\xi)]\right)\widetilde{\ch}\left([\mathcal{O}(\tau)]\right)\cdot\widetilde{\ch}\left([\mathcal{O}(\tau')]\right) \\
& = & \widetilde{\ch}\left([\mathcal{O}(\xi)]\right)\cdot\widetilde{\ch}\left([\mathcal{O}(\sigma)]\right)
\end{eqnarray*}

If follows that
\begin{eqnarray*}
\widetilde{\ch}\left(\prod_{\rho\in S} [\mathcal{O}(\rho)]\right) & \stackrel{\eqref{product formula}}{=} & \sum_{\sigma\in\langle S'\rangle} a_\sigma\cdot \widetilde{\ch}\left([\mathcal{O}(\xi)]\cdot[\mathcal{O}(\sigma)]\right) \\ 
& = & \sum_{\sigma\in\langle S'\rangle} a_\sigma\cdot\widetilde{\ch}\left([\mathcal{O}(\xi)]\right)\cdot\widetilde{\ch}\left([\mathcal{O}(\sigma)]\right) \\
& = & \widetilde{\ch}\left([\mathcal{O}(\xi)]\right)\cdot \sum_{\sigma\in\langle S'\rangle} a_\sigma\cdot\widetilde{\ch}\left([\mathcal{O}(\sigma)]\right) \\
& = & \widetilde{\ch}\left([\mathcal{O}(\xi)]\right)\cdot \widetilde{\ch}\left(\prod_{\rho\in S'} [\mathcal{O}(\rho)]\right) \\
\text{by the inductive hypothesis} & = & \widetilde{\ch}\left([\mathcal{O}(\xi)]\right)\cdot \prod_{\rho\in S'}\widetilde{\ch}([\mathcal{O}(\rho)]) \\
& = & \prod_{\rho\in S}\widetilde{\ch}([\mathcal{O}(\rho)])
\end{eqnarray*}
\end{proof}

\subsection{The Todd class}
Let
\[
\widetilde{\Td}(\Phi) := \prod_{\rho\in\Phi(1)} \dfrac{\phi_\rho}{1-\exp(-\phi_\rho)} \in H^0(\Phi;\widehat{\mathcal{A}}_\Phi)
\]
Let $\Td(\Phi)\in H(\Phi)$ denote the image of $\widetilde{\Td}(\Phi)$ under the map $H^0(\Phi;\widehat{\mathcal{A}}_\Phi) \to H(\Phi)$.

\begin{thm}[\cite{I}, Theorem 3.3]\label{thm: genus one}
Suppose that $\Phi$ is a complete unimodular simplicial fan. Then,
\[
\int_\Phi \Td(\Phi) = 1 .
\]
\end{thm}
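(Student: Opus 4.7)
The plan is to apply Brion's integration formula from Section \ref{subsection: brion's functional} to unfold $\int_\Phi \widetilde{\Td}(\Phi)$ as a sum over top-dimensional cones, and then verify the resulting combinatorial identity by induction on star subdivisions.

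Using the formula $\zeta_\Phi(f) = \sum_{\sigma \in \Phi(n)} f_\sigma/F_\sigma$ together with the observation that $\phi_\rho|_\sigma = 0$ for $\rho \notin [\sigma](1)$ (since $\phi_\rho$ is supported on $\Star(\rho)$), the fact that unimodularity forces $\phi_\rho|_\sigma$ to equal the linear functional $v_\rho^{*,\sigma}$ dual to $v_\rho$ in the basis of primitive vectors of $[\sigma](1)$, and the identification $F_\sigma = \prod_{\rho \in [\sigma](1)} v_\rho^{*,\sigma}$, the statement $\int_\Phi \widetilde{\Td}(\Phi) = 1$ reduces to showing
\[
\sum_{\sigma \in \Phi(n)} \prod_{\rho \in [\sigma](1)} \frac{1}{1 - \exp(-v_\rho^{*,\sigma})} \equiv 1 \pmod{\widehat{A}_V^+} .
\]

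By Theorem \ref{thm: subdivision theorem}, any complete unimodular fan of dimension $n$ is connected to $\mathbb{P}^n$ by a chain of regular star subdivisions, so the task splits into (i) verifying the identity for $\mathbb{P}^n$, and (ii) showing its invariance under regular star subdivisions. For (i), I would use that all Courant functions reduce to a single generator $t$ in $H(\mathbb{P}^n) \cong \mathbb{R}[t]/(t^{n+1})$ (because $\sum_\rho v_\rho = 0$ forces $[\phi_\rho] = [\phi_{\rho'}]$ in $H^2$), hence $\Td(\mathbb{P}^n) = \left(t/(1-e^{-t})\right)^{n+1}$ and $\int_{\mathbb{P}^n} \Td(\mathbb{P}^n) = [t^n]\left(t/(1-e^{-t})\right)^{n+1}$; the substitution $u = 1 - e^{-t}$ converts this coefficient extraction into $[u^n]\frac{1}{1-u} = 1$.

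For (ii), given a regular star subdivision $\pi \colon \Psi \to \Phi$ at $\tau \in \Phi(k)$ along a new ray $\rho_0$, I would combine $\int_\Psi = \int_\Phi \circ \pi_*$ with the transformation law $\pi^*\phi_\rho = \phi_\rho^\Psi + \phi_{\rho_0}^\Psi$ (for $\rho \in [\tau](1)$, and $\pi^*\phi_\rho = \phi_\rho^\Psi$ otherwise) to factor $\widetilde{\Td}(\Psi) = \pi^*\widetilde{\Td}(\Phi) \cdot R$ for an explicit rational expression $R$ in the Courant functions; the projection formula then reduces the claim to $\pi_*(R) = 1$ in $H(\Phi)$. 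Via the Gysin decomposition of Section \ref{subsection: Decomposition and the Gysin map}, this becomes a local identity on $\overline{\Star}_\Psi(\rho_0)$, ultimately reducible to the already-verified $\mathbb{P}^{k-1}$ instance. The main obstacle is this last step: $R$ is a rational expression whose poles cancel only after passing through the Gysin map, so the computation of $\pi_*(R)$ requires careful manipulation using Proposition \ref{exceptionaldivisor} together with the lower-dimensional induction hypothesis.
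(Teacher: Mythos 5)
First, a point of comparison: the paper does not prove this statement at all --- it is imported from Ishida \cite{I}, whose argument goes through polyhedral Laurent series and Brion's equalities for lattice-point generating functions of cones (this is exactly what your first displayed identity, $\sum_{\sigma}\prod_{\rho}(1-\exp(-v^{*,\sigma}_\rho))^{-1}=1$, is the entry point to). Your proposal is therefore a genuinely different route. Its first two ingredients are sound: the unfolding via $\zeta_\Phi$ is correct, and the base case $\mathbb{P}^n$ is handled correctly (the relations $\sum_\rho\langle L,v_\rho\rangle\phi_\rho=L$ do force all $[\phi_\rho]$ to coincide with a class $t$ satisfying $\int_{\mathbb{P}^n}t^n=1$, and the $u=1-e^{-t}$ substitution is the standard computation).

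The gap is in step (ii), and it is not a technicality: the identity $\pi_*(R)=1$ \emph{is} the theorem, and you have only asserted it. Concretely, since $R\equiv 1\pmod{\phi_{\rho_0}}$ one can write $R-1=\widetilde{i}_*(\widetilde{i}^*Q)$ with $Q=(R-1)/\phi_{\rho_0}$, so by injectivity of $i_*$ the claim $\pi_*(R)=1$ is equivalent to $\widetilde{\pi}_*(\widetilde{i}^*Q)=0$ in $H(\overline{\Star}_\Phi(\tau))$. Setting $c=\widetilde{i}^*\phi_{\rho_0}$ and $N_\rho:=i^*\phi_\rho$ for $\rho\in[\tau](1)$, the transformation law $\pi^*\phi_\rho=\phi^\Psi_\rho+\phi_{\rho_0}$ gives $\widetilde{i}^*\phi^\Psi_\rho=\widetilde{\pi}^*N_\rho-c$, and the required identity becomes
\[
\widetilde{\pi}_*\left[\frac{1}{c}\left(\frac{c}{1-e^{-c}}\prod_{\rho\in[\tau](1)}\frac{\left(\widetilde{\pi}^*N_\rho-c\right)\left(1-e^{-\widetilde{\pi}^*N_\rho}\right)}{\left(1-e^{-(\widetilde{\pi}^*N_\rho-c)}\right)\widetilde{\pi}^*N_\rho}-1\right)\right]=0
\]
in $H(\overline{\Star}_\Phi(\tau))[c]/(p(c))$ with $p(c)=\prod_\rho\left(c-\widetilde{\pi}^*N_\rho\right)$. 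This is the classical ``key lemma'' of the blow-up proofs of Grothendieck--Riemann--Roch; it is true, but it is a genuine residue computation in the classes $N_\rho$, and it does \emph{not} reduce to the already-verified $\int_{\mathbb{P}^{k-1}}\Td(\mathbb{P}^{k-1})=1$ as you claim --- that is the degenerate case $N_\rho=0$, whereas here $\widetilde{\pi}_*(c^j)$ for $j\geqslant k$ depends nontrivially on the $N_\rho$. The only tool the paper supplies here is Proposition \ref{exceptionaldivisor}, which controls $\widetilde{\pi}_*(c^{k-1})$ only, and is not enough. To complete your argument you must either carry out the Lagrange-interpolation/residue evaluation of $\widetilde{\pi}_*$ on all powers of $c$, or abandon the subdivision induction and prove your first displayed identity directly, which is Ishida's route.
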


\subsection{The Hirzebruch's $L$-class}
Let
\[
\widetilde{L}(\Phi) = \widetilde{\ch}(\sum_i[\Omega^i_\Phi])\widetilde{\Td}(\Phi)
\]
Let $L(\Phi)\in H(\Phi)$ denote the image of $\widetilde{L}(\Phi)$ under the map $H^0(\Phi;\widehat{\mathcal{A}}_\Phi) \to H(\Phi)$.

The exact sequence \eqref{ses forms big} shows that
\[
[\Omega^1_\Phi]+[\left(H^1\Omega^1_\Phi\right)_\Phi] = \sum_{\rho\in\Phi(1)}[\mathcal{O}(\rho)]\]
in $K(\Phi)$. Therefore,
\[
\sum_i[\Omega^i_\Phi]\ \bullet\ \sum_j [\EP^jH^1(\Phi;\Omega^1_\Phi)_\Phi] = \sum_k\EP^k(\bigoplus_{\rho\in\Phi(1)}[\mathcal{O}(\rho)]) = \prod_{\rho\in\Phi(1)} ([\mathcal{O}(\rho)]+1) ,
\]
Let $h_1(\Phi) := \dim H^2(\Phi)$. Thus, 
\[
2^{h_1(\Phi)}\sum_i[\Omega^i_\Phi] = \prod_{\rho\in\Phi(1)} ([\mathcal{O}(\rho)]+1) .
\]
It follows from Proposition \ref{prop: multiplicativity of ch} that
\[
\widetilde{\ch}(\prod_{\rho\in\Phi(1)} ([\mathcal{O}(\rho)]+1)) = \prod_{\rho\in\Phi(1)} (\widetilde{\ch}([\mathcal{O}(\rho)]) + 1).
\]
Therefore,
\[
\widetilde{\ch}(\sum_i[\Omega^i_\Phi]) = \dfrac1{2^{h_1(\Phi)}}\prod_{\rho\in\Phi(1)} (\widetilde{\ch}([\mathcal{O}(\rho)]) + 1).
\]

It follows that
\begin{multline*}
L(\Phi) = 2^{2n}\prod_{\rho\in\Phi(1)}\frac{1+\exp(-\phi_\rho)}{1-\exp(-\phi_\rho)} \frac{ \phi_\rho }{2}
    = 2^{2n}\prod_{\rho\in\Phi(1)}\frac{1+\exp\left(-2\dfrac{ \phi_\rho }{2}\right)}{1-\exp\left(-2\dfrac{ \phi_\rho }{2}\right)} \dfrac{ \phi_\rho }{2}\\
    =2^{2n}\prod_{\rho\in\Phi(1)}\frac{\dfrac{ \phi_\rho }{2}}{\tanh\left(\dfrac{ \phi_\rho }{2}\right)}
    =2^{2n}\prod_{\rho\in\Phi(1)} \left(1-\sum_{k\geq 1}(-1)^k\frac{B_{2k}}{(2k)!} \phi_\rho ^{2k} \right),
\end{multline*}
where $B_{2k}$ is the absolute value of the $2k$-th Bernoulli number. Therefore the top degree term of $L(\Phi)$ satisfies
\begin{equation}\label{the L polynomial}
(-1)^nL_{2n}(\Phi) = 2^{2n}\sum_{k=1}^n \sum_{\substack{m_1+\dots+m_k = n,\\
    m_i>0,\\
    \rho_1,\dots,\rho_k\in\Phi(1),\\
    \rho_{i}\neq \rho_j}}(-1)^k \frac{B_{2m_1}}{(2m_1)!}\cdot\ldots\cdot\frac{B_{2m_k}}{(2m_k)!}\phi_{\rho_1}^{2m_1}\cdot\ldots\cdot \phi_{\rho_k}^{2m_k} .
\end{equation}

\subsection{Riemann-Roch type theorem}
The proof of Theorem \ref{thm: RR} below is an adaptation to the present context of the proof  due to H.~Schenck (\cite{S}) of the Hirzebruch-Riemann-Roch Theorem for toric varieties.

\begin{thm}\label{thm: RR}
Suppose that $\Phi$ is a complete unimodular simplicial fan. Then, for $\gamma\in K(\Phi)$
\begin{equation}\label{RR}
\chi(\gamma) = \int_\Phi \ch(\gamma)\cdot\Td(\Phi) .
\end{equation}
\end{thm}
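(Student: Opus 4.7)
The plan is to exploit the $\mathbb{Z}$-linearity of both sides of \eqref{RR} in $\gamma$ and reduce to the basis $\{[\mathbb{R}_{\Star(\sigma)}]\}_{\sigma\in\Phi}$ of $K(\Phi)$ supplied by Proposition~\ref{prop: bases for K}(2). For every $\sigma\in\Phi$ the sheaf $\mathbb{R}_{\Star(\sigma)}$ is injective with $\Gamma(\Phi;\mathbb{R}_{\Star(\sigma)}) = \mathbb{R}$, so the left-hand side is immediate: $\chi([\mathbb{R}_{\Star(\sigma)}]) = 1$. The entire theorem thus reduces to the equality
\[
\int_\Phi \ch([\mathbb{R}_{\Star(\sigma)}])\cdot\Td(\Phi) = 1 \qquad \text{for each } \sigma\in\Phi.
\]

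To attack this I will first expand the Chern character using Lemma~\ref{lemma: line bundles vs injectives} together with the multiplicativity of Proposition~\ref{prop: multiplicativity of ch}, obtaining
\[
\ch([\mathbb{R}_{\Star(\sigma)}]) = \sum_{\tau\leqslant\sigma}(-1)^{d(\tau)}\!\!\prod_{\rho\in[\tau](1)}\!\!\exp(-\phi_\rho) = \prod_{\rho\in[\sigma](1)}\!\bigl(1-\exp(-\phi_\rho)\bigr).
\]
Multiplying by $\Td(\Phi)$ causes the factors $(1-\exp(-\phi_\rho))$ for $\rho\in[\sigma](1)$ to cancel the matching denominators of the Todd class, so the integrand simplifies to $\phi_\sigma\cdot\prod_{\rho\in\Phi(1)\setminus[\sigma](1)}\frac{\phi_\rho}{1-\exp(-\phi_\rho)}$, where $\phi_\sigma = \prod_{\rho\in[\sigma](1)}\phi_\rho$ is the Courant function of $\sigma$. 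Next I invoke the projection formula \eqref{profor} (with $a=1$) to rewrite $\int_\Phi\phi_\sigma\cdot(-)$ as $\int_{\overline{\Star}(\sigma)}i^*(-)$, reducing the task to
\[
\int_{\overline{\Star}(\sigma)} i^*\!\left[\prod_{\rho\in\Phi(1)\setminus[\sigma](1)}\frac{\phi_\rho}{1-\exp(-\phi_\rho)}\right] = 1.
\]

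The heart of the argument is the identification of this pulled-back product with $\Td(\overline{\Star}(\sigma))$, which I will establish ray-by-ray. If $\rho+\sigma\notin\Phi$, then $\phi_\rho$ (supported on $\Star(\rho)$) vanishes on $[\Star(\sigma)]$, so $i^*\phi_\rho=0$ and the corresponding factor restricts to $1$. If $\rho+\sigma\in\Phi$ with $\rho\notin[\sigma](1)$, then $\bar\rho\in\overline{\Star}(\sigma)(1)$, and unimodularity of the cone $\sigma+\rho$ forces the projection $p\colon V\to V/\Span(\sigma)$ to send the primitive vector $v_\rho$ to the primitive vector $v_{\bar\rho}$; consequently $i^*\phi_\rho = \phi_{\bar\rho}$ in $H(\overline{\Star}(\sigma))$ via the isomorphism \eqref{mapanatural}. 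As $\rho$ ranges over these indices, $\bar\rho$ ranges over all rays of $\overline{\Star}(\sigma)$, so the product restricts exactly to $\Td(\overline{\Star}(\sigma))$. Since $\overline{\Star}(\sigma)$ is again complete, simplicial, and unimodular, Theorem~\ref{thm: genus one} supplies $\int_{\overline{\Star}(\sigma)}\Td(\overline{\Star}(\sigma)) = 1$ and finishes the proof.

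The main obstacle is the last computation. One must verify with care that unimodularity guarantees $\{v_\xi\}_{\xi\in[\sigma](1)}$ spans $\Lambda\cap\Span(\sigma)$ saturatedly, so that $p(v_\rho)$ is indeed primitive in the quotient lattice, and that the resulting equality between Courant functions descends correctly through $p^*$ modulo $A^+$. Everything else---the basis reduction, the expansion of $\ch([\mathbb{R}_{\Star(\sigma)}])$, and the appeal to Brion's projection formula---is bookkeeping assembled from results already proved in the paper.
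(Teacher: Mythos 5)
Your proof is correct, and it takes a genuinely different route from the paper's. The paper works with the basis $[\mathcal{O}]$, $[\mathcal{O}(\sigma)]$ of Proposition~\ref{prop: bases for K}(3): it shows $\chi([\mathcal{O}(\sigma)])=0$ and then proves $\int_\Phi \ch([\mathcal{O}(\sigma)])\Td(\Phi)=0$ by a double induction (on $\dim\Phi$ and on $\dim\sigma$), peeling off one ray $\rho\leqslant\sigma$ at a time and pushing the computation to $\overline{\Star}_\Phi(\rho)$. You instead use the injective basis $[\mathbb{R}_{\Star(\sigma)}]$, for which $\chi=1$, and carry out a single closed-form computation: expand $\widetilde{\ch}([\mathbb{R}_{\Star(\sigma)}])=\prod_{\rho\in[\sigma](1)}(1-\exp(-\phi_\rho))$, cancel against the Todd denominators, apply \eqref{profor} once with $a=1$, identify the restricted product with $\Td(\overline{\Star}_\Phi(\sigma))$, and invoke Theorem~\ref{thm: genus one} on the quotient fan. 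This eliminates both inductions; it also does not actually require Proposition~\ref{prop: multiplicativity of ch} (your expansion of $\widetilde{\ch}([\mathbb{R}_{\Star(\sigma)}])$ only uses the \emph{defining} formula $\widetilde{\ch}([\mathcal{O}(\tau)])=\prod_{\rho\in[\tau](1)}\exp(-\phi_\rho)$ for faces $\tau\leqslant\sigma$, which are honest cones of $\Phi$), whereas the paper's route leans on that multiplicativity. The two points you rightly flag as needing care are real but unproblematic: if $\rho+\sigma\notin\Phi$ then $\phi_\rho$ vanishes on every maximal cone of $[\Star_\Phi(\sigma)]$ (no cone contains both $\rho$ and $\sigma$), hence vanishes identically as a section over $[\Star_\Phi(\sigma)]$; and if $\rho+\sigma\in\Phi$, unimodularity makes $p(v_\rho)$ primitive and the rays $\rho$ with $\rho+\sigma\in\Phi$, $\rho\notin[\sigma](1)$ biject onto $\overline{\Star}_\Phi(\sigma)(1)$, so $i^*\phi_\rho=\phi_{\bar\rho}$ on the nose. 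One further normalization point worth a sentence in a written-up version: applying Theorem~\ref{thm: genus one} on $\overline{\Star}_\Phi(\sigma)$ with the correct constant requires the volume form $\Omega'$ on $V/\Span(\sigma)$ entering \eqref{profor} to be the lattice volume form of $\Lambda/(\Lambda\cap\Span(\sigma))$, which unimodularity guarantees is compatible with $\Omega_V=f_1\wedge\cdots\wedge f_k\wedge\Omega'$; the paper's own proof makes the same implicit identification in its displayed chain of equalities.
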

\begin{proof}
By Proposition \ref{prop: bases for K} it is sufficient to show that \eqref{RR} holds for
$\gamma$ equal to $[\mathcal{O}]$, $[\mathcal{O}(\sigma)]$, $\oo\neq\sigma\in\Phi$. Since $\chi([\mathcal{O}]) = 1$ and $\ch([\mathcal{O}]) = 1$ the equality \eqref{RR} with $\gamma=[\mathcal{O}]$ follows from Theorem \ref{thm: genus one}.

For $\oo\neq\sigma\in\Phi$ it follows from Lemma \ref{lemma: line bundles vs injectives} that
\[
\chi([\mathcal{O}(\sigma)]) = \sum_{\tau\leqslant\sigma}(-1)^{d(\tau)}\chi(\mathbb{R}_{\Star(\tau)}) = 0.
\]
and therefore it is sufficient to show that
\begin{equation}\label{RR vanishing}
\int_\Phi \ch([\mathcal{O}(\sigma)])\cdot\Td(\Phi) = 0.
\end{equation}
The proof of \eqref{RR vanishing} proceeds by induction on the dimension of $\Phi$.

The case $\dim\Phi = 1$, i.e. $\Phi = \mathbb{P}^1$, follows from direct calculation left to the reader. We now assume that \eqref{RR vanishing} holds for all complete unimodular fans of dimension smaller than $\dim\Phi$.

It follows from Theorem \ref{thm: genus one} that for $\rho\in\Phi(1)$ the equality \eqref{RR vanishing} is equivalent to
\[
\int_\Phi (1-\exp(- \phi_\rho ))\cdot\Td(\Phi) = 1 .
\]
Since $\Td(\Phi) = \prod_{\xi\in\Phi(1)}\dfrac{ \phi_\xi}{1-\exp(- \phi_\xi)}$ and $ \phi_\rho  \cdot\phi_\xi=0$ for $\xi\notin\bdstar_\Phi(\rho)(1)\cup \{\rho\}$ it follows that
\[
(1-\exp(- \phi_\rho ))\Td(\Phi) =  \phi_\rho\prod_{\xi\in\Phi(1)\smallsetminus\{\rho\}}\dfrac{ \phi_\xi}{1-\exp(- \phi_\xi)} = 
 \phi_\rho\prod_{\xi\in\bdstar_\Phi(\rho)(1)}\dfrac{ \phi_\xi}{1-\exp(- \phi_\xi)}
\]
By Theorem \ref{thm: genus one}, Proposition \ref{prop: gysinstars} and induction on dimension of the fan
\[
\int\limits_\Phi (1-\exp(- \phi_\rho ))\cdot\Td(\Phi) = \int\limits_\Phi  \phi_\rho\prod_{\xi\in\bdstar_\Phi(\rho)(1)}\dfrac{ \phi_\xi}{1-\exp(- \phi_\xi)}
= \int\limits_{\overline{\Star}_\Phi(\rho)} \Td(\overline{\Star}_\Phi(\rho)) = 1
\]
For general cones we shall prove \eqref{RR vanishing} by induction on dimension of the cone, the base of the induction being the case of rays above. Thus, let $\sigma\in\Phi$, $\dim(\sigma) > 1$ and assume that
\begin{equation}\label{inductive RR vanishing}
\int_\Phi \ch([\mathcal{O}(\tau)])\cdot\Td(\Phi) = 0
\end{equation}
for all cones $\tau\in\Phi$ with $\dim(\tau) < \dim(\sigma)$.

Let $\rho\in[\sigma](1)$ with the opposite facet $\tau$ so that $\sigma = \rho + \tau$. Then, by definition, $\mathcal{O}(\sigma) = \mathcal{O}(\rho)\otimes\mathcal{O}(\tau)$ and $\ch([\mathcal{O}(\sigma)]) = \ch([\mathcal{O}(\rho)])\cdot\ch([\mathcal{O}(\tau)])$. In view of \eqref{inductive RR vanishing} it is sufficient to show that
\[
\int_\Phi \ch([\mathcal{O}(\tau)])\cdot(1-\ch([\mathcal{O}(\rho)])) \cdot\Td(\Phi) = 0
\]
As before, by Proposition \ref{prop: gysinstars} and induction on dimension of the fan
\begin{multline*}
\int\limits_\Phi \ch([\mathcal{O}(\tau)])\cdot(1-\exp(- \phi_\rho ))\cdot\Td(\Phi) \\
= \int\limits_\Phi \ch([\mathcal{O}(\tau)])\cdot \phi_\rho\prod_{\xi\in\bdstar_\Phi(\rho)(1)}\dfrac{ \phi_\xi}{1-\exp(- \phi_\xi)} \\
= \int\limits_{\overline{\Star}_\Phi(\rho)}\!\!\!\!\! \ch([\mathcal{O}(\tau)])\cdot\Td(\overline{\Star}_\Phi(\rho)) = 0
\end{multline*}
\end{proof}

\subsection{Hirzebruch signature theorem for unimodular simplicial fans}

\begin{thm}\label{thm: signature}
Suppose that $\Phi$ is a complete unimodular fan of even dimension. Then,
\[
\sign(\Phi) = \int_\Phi L(\Phi) .
\]
\end{thm}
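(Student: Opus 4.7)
The proof I would write is essentially a one-line assembly of the machinery already developed. The plan is to run the standard Hirzebruch chain: express the signature as an alternating sum of Euler characteristics of the sheaves $\Omega^i_\Phi$, convert each Euler characteristic into an integral against the Todd class using the Riemann--Roch theorem just proved, and then recognize the resulting sum as $\int_\Phi L(\Phi)$.

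First, by Theorem \ref{thm: signature is euler characteristic} together with Proposition \ref{prop: coh of forms},
\[
\sign(\Phi) = \varepsilon(\Phi) = \sum_{i} \chi(\Phi;\Omega^i_\Phi) = \sum_{i} \chi\bigl([\Omega^i_\Phi]\bigr),
\]
where the last equality uses that $\Phi$ is complete so that $\chi(\Phi;F)$ computed via the cellular complex equals $\sum_j (-1)^j \dim H^j(\Phi;F)$. Next, applying Theorem \ref{thm: RR} term by term,
\[
\sum_{i} \chi\bigl([\Omega^i_\Phi]\bigr) = \sum_{i}\int_\Phi \ch\bigl([\Omega^i_\Phi]\bigr)\cdot \Td(\Phi).
\]

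The final step is to pull the sum inside the integral and recognize the resulting class. Since $\ch\colon K(\Phi)\to H(\Phi)$ is a group homomorphism and multiplication by $\Td(\Phi)$ and $\int_\Phi$ are $\mathbb{R}$-linear, the right-hand side equals
\[
\int_\Phi \ch\!\left(\sum_{i}[\Omega^i_\Phi]\right)\cdot \Td(\Phi).
\]
Now, by construction, $\widetilde{L}(\Phi) = \widetilde{\ch}\bigl(\sum_i [\Omega^i_\Phi]\bigr)\cdot \widetilde{\Td}(\Phi)$ in $H^0(\Phi;\widehat{\mathcal{A}}_\Phi)$, and $L(\Phi)$ is defined as its image in $H(\Phi)$ under the quotient $H^0(\Phi;\widehat{\mathcal{A}}_\Phi)\to H(\Phi)$, which is a ring homomorphism. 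Hence $L(\Phi) = \ch\bigl(\sum_i [\Omega^i_\Phi]\bigr)\cdot \Td(\Phi)$ in $H(\Phi)$, and we conclude
\[
\sign(\Phi) = \int_\Phi L(\Phi).
\]

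There is no real obstacle here; all content has been packaged into Theorems \ref{thm: signature is euler characteristic} and \ref{thm: RR}. The only thing to verify — which is automatic from the construction of $\ch$, $\Td(\Phi)$ and the definition of the map $H^0(\Phi;\widehat{\mathcal{A}}_\Phi) \to H(\Phi)$ as a ring homomorphism — is the compatibility that lets us identify $\ch\bigl(\sum_i[\Omega^i_\Phi]\bigr)\cdot \Td(\Phi)$ with $L(\Phi)$ inside $H(\Phi)$ before integrating. Thus the signature theorem is a formal corollary of the combinatorial identity $\sign(\Phi)=\varepsilon(\Phi)$ and the Riemann--Roch theorem for sheaves on the fan.
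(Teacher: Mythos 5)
Your proof is correct and follows the same route as the paper: reduce to $\sign(\Phi)=\sum_i\chi(\Phi;\Omega^i_\Phi)$ via Theorem \ref{thm: signature is euler characteristic} and Proposition \ref{prop: coh of forms}, then apply Theorem \ref{thm: RR} and identify the result with $\int_\Phi L(\Phi)$ by the definition of the $L$-class. The paper's version is just terser, leaving the linearity and ring-homomorphism bookkeeping implicit.
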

\begin{proof}
By Theorem \ref{thm: signature is euler characteristic} and Proposition \ref{prop: coh of forms}
\[
\sign(\Phi) = \sum_i (-1)^i\dim H^i(\Phi;\Omega^i_\Phi) = \sum_i \chi(\Phi;\Omega^i_\Phi) .
\]
It follows from Theorem \ref{thm: RR} that
\[
\sign(\Phi) = \int_\Phi L(\Phi) .
\]
\end{proof}

\section{Application: signature of locally convex fans}\label{section: locally convex fans}
Suppose that $\Phi$ is a fan.

\begin{definition}
A fan $\Phi$ is called \emph{locally convex at $\sigma\in\Phi$} if the subset $|\Star_\Phi(\sigma)|$ is convex.

A fan $\Phi$ is called \emph{locally convex} if it is locally convex at every cone.
\end{definition}

\begin{lemma}
Suppose that $\Phi$ is locally convex at $\sigma\in\Phi$. Then, the fan $\overline{\Star}_\Phi(\sigma)$ is locally convex.
\end{lemma}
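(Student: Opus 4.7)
My plan is to prove the lemma by identifying each star inside $\overline{\Star}_\Phi(\sigma)$ with the projection, under $p \colon V \to V/\Span(\sigma)$, of a corresponding star inside $\Phi$, and then invoking the elementary fact that the linear image of a convex set is convex.

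First I will record the combinatorial structure. The cones of $\overline{\Star}_\Phi(\sigma)$ are exactly the images $\tau/\sigma = p(\tau)$ of cones $\tau \in \Phi$ with $\tau \geq \sigma$, and the assignment $\tau \mapsto \tau/\sigma$ is an isomorphism of posets. Under this bijection, for every $\tau \in \Phi$ with $\tau \geq \sigma$,
\[
|\Star_{\overline{\Star}_\Phi(\sigma)}(\tau/\sigma)| \;=\; \bigcup_{\mu \in \Phi,\ \mu \geq \tau} p(\mu) \;=\; p\bigl(|\Star_\Phi(\tau)|\bigr),
\]
since $\mu/\sigma \geq \tau/\sigma$ in $\overline{\Star}_\Phi(\sigma)$ precisely when $\mu \geq \tau$ in $\Phi$, and the image of a union is the union of the images.

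Given this identification, the conclusion reduces to the assertion that $|\Star_\Phi(\tau)|$ is convex for each $\tau \in \Phi$ with $\tau \geq \sigma$: once we have this, its linear image $p(|\Star_\Phi(\tau)|)$ is convex as well, which is precisely local convexity of $\overline{\Star}_\Phi(\sigma)$ at $\tau/\sigma$. The hypothesis of the lemma handles $\tau = \sigma$ directly; for an arbitrary $\tau \geq \sigma$ I expect to argue by picking an interior point $x_0 \in \mathrm{relint}(\tau)$ and observing that every cone of $\Phi$ meeting a sufficiently small neighborhood of $x_0$ must contain $\tau$, so that locally at $x_0$ the sets $|\Star_\Phi(\sigma)|$ and $|\Star_\Phi(\tau)|$ coincide, and then propagating this local coincidence to a global convexity statement using that $|\Star_\Phi(\tau)|$ is closed under positive scaling and star-shaped with respect to $x_0$. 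The main obstacle will be precisely this propagation step, since the star-shape from $x_0$ and the cone property only give inward or outward radial moves, and one has to combine them with the convexity of the ambient set $|\Star_\Phi(\sigma)|$ to conclude that arbitrary segments inside $|\Star_\Phi(\tau)|$ lie in $|\Star_\Phi(\tau)|$. Once this is in hand, the rest is the purely formal compatibility between the star operation and the projection $p$, together with the preservation of convex combinations by linear maps.
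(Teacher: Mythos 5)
Your opening reduction is sound and is the real content of the lemma: for $\tau\geqslant\sigma$ one has $|\Star_{\overline{\Star}_\Phi(\sigma)}(\tau/\sigma)| = p\bigl(|\Star_\Phi(\tau)|\bigr)$, and a linear image of a convex set is convex, so everything comes down to the convexity of $|\Star_\Phi(\tau)|$ for the cones $\tau\geqslant\sigma$. The paper omits the proof, but this is surely the intended argument.

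The difficulty is the step you yourself flag as the main obstacle: deducing convexity of $|\Star_\Phi(\tau)|$ for all $\tau\geqslant\sigma$ from convexity of $|\Star_\Phi(\sigma)|$ alone. That implication is false, and no propagation argument can rescue it; the local coincidence of $|\Star_\Phi(\sigma)|$ and $|\Star_\Phi(\tau)|$ near a relative interior point of $\tau$ is correct, but it only controls the star near that face, not globally. Concretely, take $\Phi=\mathbb{P}^2$ in $\mathbb{R}^2$ and $\sigma=\oo$: then $|\Star_\Phi(\oo)|=\mathbb{R}^2$ is convex, yet $\overline{\Star}_\Phi(\oo)=\mathbb{P}^2$ is not locally convex, since the star of the ray through $-e_1-e_2$ is the closed complement of the open cone spanned by $e_1,e_2$, which is not convex. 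If you prefer $\sigma\neq\oo$, let $\Phi$ in $\mathbb{R}^3$ be generated by the three cones $\mathrm{cone}(e_3,\ v_i+e_3,\ v_j+e_3)$ with $\{v_1,v_2,v_3\}=\{e_1,e_2,-e_1-e_2\}$ and $\sigma=\mathrm{cone}(e_3)$; then $|\Star_\Phi(\sigma)|=\{(x,y,z)\mid z\geqslant\max(x+y,\ x-2y,\ -2x+y)\}$ is convex, while $\overline{\Star}_\Phi(\sigma)\cong\mathbb{P}^2$ is not locally convex. So the lemma is false as literally stated; what is true, and what the paper actually uses in the proof of Theorem 7.3, is the version in which $\Phi$ is assumed locally convex at every cone containing $\sigma$ (in particular, when $\Phi$ is locally convex). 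Under that hypothesis your argument is already finished after the reduction --- $|\Star_\Phi(\tau)|$ is convex by assumption --- and the problematic propagation step should be deleted rather than filled in.
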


\begin{thm}\label{thm: sign of signature}
Suppose that $\Phi$ is a locally convex complete unimodular fan of dimension $\dim\Phi = 2n$. Then,
\[
(-1)^n\sign(\Phi) \geqslant 0 .
\] 
\end{thm}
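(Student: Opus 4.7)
The plan is to combine Theorem \ref{thm: signature} (Hirzebruch's signature theorem for fans) with the explicit formula \eqref{the L polynomial} for the top-degree part of $L(\Phi)$. Since the numerical constants $\prod_i \tfrac{B_{2m_i}}{(2m_i)!}$ appearing in \eqref{the L polynomial} are all positive, it suffices to prove
\[
(-1)^k\int_\Phi\phi_{\rho_1}^{2m_1}\cdots\phi_{\rho_k}^{2m_k}\;\geqslant\;0
\]
for every collection of distinct rays $\rho_1,\dots,\rho_k\in\Phi(1)$ and positive integers $m_i$ with $\sum_i m_i=n$. If the $\rho_i$ do not span a cone of $\Phi$, the integrand vanishes in $H(\Phi)$; otherwise set $\tau:=\rho_1+\cdots+\rho_k\in\Phi(k)$, so that $\phi_{\rho_1}\cdots\phi_{\rho_k}=\phi_\tau$. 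Using the identity $\phi_\tau\cdot b = i_*i^*b$ from Proposition \ref{prop: gysinstars}(1), the push-pull identity $\int_\Phi i_*=\int_{\overline{\Star}_\Phi(\tau)}$ from Proposition \ref{prop: gysinstars}(3), and the observation that $\sum_i(2m_i-1)=2n-k\equiv k\pmod{2}$, the problem reduces to
\[
\int_{\overline{\Star}_\Phi(\tau)}(-i^*\phi_{\rho_1})^{2m_1-1}\cdots(-i^*\phi_{\rho_k})^{2m_k-1}\;\geqslant\;0.
\]

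The heart of the matter will be to exhibit, for each $i$, a convex piecewise linear representative of the class $-[i^*\phi_{\rho_i}]\in H^2(\overline{\Star}_\Phi(\tau))$. Granted such representatives, Proposition \ref{prop: nonnegative convex monomials} applied on the complete simplicial fan $\overline{\Star}_\Phi(\tau)$ (of dimension $2n-k$, matching the total degree of the integrand) delivers the desired nonnegativity. To construct the representative, I would fix $f_i\in\dual{V}$ with $f_i(v_{\rho_j})=\delta_{ij}$ for $j=1,\dots,k$. The cone-wise linear function $f_i-\phi_{\rho_i}$ on $[\Star_\Phi(\tau)]$ vanishes on the $\tau$-rays and therefore descends through the isomorphism \eqref{mapanatural} to a piecewise linear function $g_{f_i}$ on $\overline{\Star}_\Phi(\tau)$ determined by $g_{f_i}(v_{\eta/\tau}) = f_i(v_\eta)$ for every $\eta\in\Phi(1)$ with $\tau+\eta\in\Phi$; this $g_{f_i}$ represents $-[i^*\phi_{\rho_i}]$. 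Since different choices of $f_i$ differ by a linear functional on $V/\Span(\tau)$ and such a functional preserves both the class and convexity, convexity of $g_{f_i}$ is a property of the class alone.

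For each wall $\omega/\tau$ of $\overline{\Star}_\Phi(\tau)$ arising from $\omega\in\Phi(n-1)$ with $\omega\supseteq\tau$, the convexity inequality for $g_{f_i}$, when transported back to $\Phi$ via the wall relation
\[
v_\mu+v_{\mu'}\;=\;\sum_{\rho\in[\omega](1)}c_\rho v_\rho
\]
among the rays of the two top-dimensional cones $\sigma=\omega+\mu,\ \sigma'=\omega+\mu'\in\Phi(n)$ sharing $\omega$, will collapse to the single condition $c_{\rho_i}\geqslant 0$ on the coefficient of $v_{\rho_i}$. This inequality is forced by local convexity of $\Phi$ at $\omega$: since $|\Star_\Phi(\omega)|=\sigma\cup\sigma'$ is convex, the line segment from $v_\mu$ to $v_{\mu'}$ lies in $\sigma\cup\sigma'$ and must cross the common facet $\omega=\sigma\cap\sigma'$ at some point $v^*=t^*v_\mu+(1-t^*)v_{\mu'}$ with $t^*\in(0,1)$. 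Expressing $v^*\in\omega$ as a nonnegative combination $v^*=\sum_{\rho\in[\omega](1)}s_\rho v_\rho$ with $s_\rho\geqslant 0$ and comparing with the wall relation, which is unique up to positive scalar by unimodularity, forces $c_\rho\geqslant 0$ for every $\rho\in[\omega](1)$, and in particular $c_{\rho_i}\geqslant 0$.

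The hard part will be the block of sign and convexity translations: verifying in detail that convexity of $g_{f_i}$ at each wall of $\overline{\Star}_\Phi(\tau)$ is equivalent to the single inequality $c_{\rho_i}\geqslant 0$, and then confirming the segment-crossing argument under the local convexity hypothesis. Both steps are elementary but require careful bookkeeping with the normalization of the wall relation and with the passage from $\Phi$ to the quotient fan $\overline{\Star}_\Phi(\tau)$.
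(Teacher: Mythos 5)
Your argument is correct and is essentially the paper's proof: both reduce via Theorem \ref{thm: signature} and formula \eqref{the L polynomial} to showing $(-1)^k\int_\Phi\phi_{\rho_1}^{2m_1}\cdots\phi_{\rho_k}^{2m_k}\geqslant 0$, dispose of the case where the $\rho_i$ span no cone, push the remaining integral down to $\overline{\Star}_\Phi(\tau)$ using $\phi_\tau=\phi_{\rho_1}\cdots\phi_{\rho_k}$ together with Proposition \ref{prop: gysinstars}, and conclude by Proposition \ref{prop: nonnegative convex monomials}. The only substantive difference is that the paper merely asserts that local convexity makes $-\phi_{\rho_i}$ linearly equivalent to a (strictly) convex function on $\overline{\Star}_\Phi(\tau)$, whereas you actually verify it: your reduction of wall-by-wall convexity of $g_{f_i}$ to the single inequality $c_{\rho_i}\geqslant 0$, and your derivation of $c_\rho\geqslant 0$ from convexity of $|\Star_\Phi(\omega)|$ via the segment-crossing argument (with uniqueness of the wall relation forcing $t^*=\tfrac12$ and $s_\rho=c_\rho/2$), are both sound, and you correctly note that plain convexity --- which is all local convexity yields and all Proposition \ref{prop: nonnegative convex monomials} requires --- suffices.
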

\begin{proof}
In view of Theorem \ref{thm: signature} and the formula \eqref{the L polynomial} it suffices to show that for any collection of distinct $k$ rays $\rho_1,\dots,\rho_k\in\Phi(1)$ and integers $m_1,\dots,m_k$ such that $m_i \geqslant 1$ and $m_1+\dots m_k=n$ the inequality 
\[
\int_\Phi(-1)^k \phi_{\rho_1}^{2m_1}\cdot\hdots\cdot \phi_{\rho_k}^{2m_k}\geqslant 0
\]
holds.

If $\rho_1+\cdots +\rho_k$ is not a cone in $\Phi$ the intersection of the supports of the functions $\phi_{\rho_i}$ is empty, hence the integral vanishes.

Suppose that $\rho_1+\cdots +\rho_k = \tau \in \Phi$. Since $\Phi$ is locally convex, each function $-\phi_{\rho_i}$ is linearly equivalent to a function $\ell_i$ which, when restricted to $\overline{\Star}_\Phi(\rho_i)$, and, hence, to $\overline{\Star}_\Phi(\tau)$, is strictly convex. Therefore
\begin{eqnarray*}
\int_\Phi(-1)^k \phi_{\rho_1}^{2m_1}\cdot\hdots\cdot \phi_{\rho_k}^{2m_k} & = & \int_\Phi \left(\phi_{\rho_1}\cdot\hdots\cdot \phi_{\rho_k}\right)\cdot (-\phi_{\rho_1})^{2m_1-1}\cdot\hdots\cdot (-\phi_{\rho_k})^{2m_k-1} \\
    & = & \int_{\overline{\Star}_\Phi(\tau)} \ell_1^{2m_1-1} \cdot \hdots\cdot \ell_k^{2m_k-1} \geqslant 0
\end{eqnarray*}
by Proposition \ref{prop: nonnegative convex monomials}.
\end{proof}


\begin{thebibliography}{ABCD}
\bibitem[B]{B} M.~Brion, The structure of the polytope algebra, \emph{T\^ohoku Math. J.} \textbf{49} (1997), 1 -- 32.

\bibitem[BBFK]{BBFK} G.~Barthel, J.-P.~Brasselet, K.-H.~Fieseler, L.~Kaup,
Combinatorial intersection homology for fans. in \emph{Tohoku Math. J. (2)} \textbf{54} (2002), 1 -- 41.

\bibitem[BrL1]{HLIJ} P.~Bressler, V.~Lunts, Hard Lefschetz theorem and Hodge-Riemann relations for intersection cohomology of nonrational polytopes, \emph{Indiana Univ. Math. J.} \textbf{54} (2005), 263 -- 307.

\bibitem[BrL2]{ICNP} P.~Bressler, V.~Lunts, Intersection cohomology on nonrational polytopes, \emph{Compositio Matematica} \textbf{135} (2003), 245 -- 278.

\bibitem[I]{I} M.~Ishida, Polyhedral Laurent series and Brion’s equalities, \emph{Intl. J. Math.} \textbf{1} (1990), 251 -- 265.

\bibitem[K]{K} A.~Kirillov~Jr., Quiver representations and quiver varieties. \emph{American Mathematical Society (AMS)} (2016)
\bibitem[LR]{LR} N.C.~Leung, V.~Reiner, The signature of a toric variety, \emph{Duke Mathematical Journal} \textbf{111} (2002), 253 -- 286.

\bibitem[S]{S} H.~Schenck, Toric Hirzebruch-Riemann-Roch via Ishida’s theorem on the Todd genus,
\emph{Proc. Amer. Math. Soc.} \textbf{141} (2013), 1215--1217

\bibitem[W]{W} J.~W\l{}odarczyk, Decomposition of Birational Toric Maps in Blow-Ups and Blow-Downs, \emph{Transactions of the AMS} \textbf{349} no. 1 (1997), 373--411.
\end{thebibliography}
\end{document}